\DeclareMathOperator*{\esssup}{ess\,sup}
  \newcounter{mnote}
  \let\oldmarginpar\marginpar
    \renewcommand\marginpar[1]{\-\oldmarginpar[\raggedleft\footnotesize #1]%
    {\raggedright\footnotesize #1}}
\newtheorem{theorem}{Theorem}[section]
\newtheorem{lemma}[theorem]{Lemma}
\newtheorem{proposition}[theorem]{Proposition}
\newtheorem{corollary}[theorem]{Corollary}
\theoremstyle{definition}
\newtheorem{definition}[theorem]{Definition}
\theoremstyle{remark}
\newtheorem{remark}[theorem]{Remark}
\numberwithin{equation}{section}
\begin{document}

\title[a posteriori error estimates for fully discrete TFPDEs]{Optimal $L^\infty(L^2)$ and $L^1(L^2)$  a posteriori error estimates for the fully discrete approximations of time fractional parabolic  differential equations}

%    Information for first author
\author{Jiliang Cao}
\address{Department of Mathematics, Shanghai Normal University, Shanghai, 200234, China}
\email{jiliangcao@shnu.edu.cn}

%    Information for second author
\author{Wansheng Wang}
\address{Department of Mathematics, Shanghai Normal University, Shanghai, 200234, China}
\email{w.s.wang@163.com}

%    Information for third author
\author{Aiguo Xiao}
\address{Hunan Key Laboratory for Computation and Simulation in Science and Engineering \& Key Laboratory of
Intelligent Computing and Information Processing of Ministry of Education, Xiangtan University, Xiangtan,
Hunan 411105, China}
\email{xag@xtu.edu.cn}

%    \thanks will become a 1st page footnote.
\thanks{The second author was supported by the
Natural Science Foundation of China (Grant Nos. 12271367, 11771060), Shanghai Science and Technology Planning Projects (Grant No. 20JC1414200), and Natural Science Foundation of Shanghai (Grant No. 20ZR1441200).}
\thanks{The third author was supported by the
 National Natural Science Foundation of China (Grant No. 12071403).}
\thanks{The second author is the corresponding author.}

%\subjclass[2010]{Primary 65M12, 65M15, 65L06, 65M60, 65N30; Secondary 65M50, 65L70, 65L50}

\date{April 2, 2023.}

\keywords{Time fractional parabolic  differential equations, finite element method, fully discrete scheme, optimal a posteriori error estimates,  linear space-time reconstruction, quadratic space-time reconstruction, final error estimates}

\begin{abstract}
We derive optimal order a posteriori error estimates  in the $L^\infty(L^2)$ and $L^1(L^2)$-norms for the fully discrete approximations of time fractional parabolic  differential equations. For the discretization in time, we use the $L1$ methods, while for the spatial discretization, we use standard conforming finite element methods. The linear and quadratic space-time reconstructions are introduced, which are  generalizations of the elliptic space reconstruction. Then the related a posteriori error estimates for the linear  and quadratic space-time reconstructions play key roles in deriving  global and pointwise  final error estimates. Numerical experiments verify and complement our theoretical results.
\end{abstract}

\maketitle

\section{Introduction}\label{sec:1}

In recent years, fractional calculus has been greatly developed in many areas of science and engineering, from porous media to viscoelastic mechanics, and from non-Newtonian fluid mechanics to control systems (see, e.g., \cite{Alina16,Armour16,Igor15,Du20,Kilbas06,Paris14}). Time fractional parabolic  differential equations (TFPDEs), as an important class of anomalous diffusion equations that describe a very important physical and mechanical process of material migration and transport, have attracted a lot of attention from researchers (see, e.g., \cite{Almeida15,Malinowska15,Tarasov11,Ortigueira11,Ra80,Ralf14}). As pointed out by Crank and Cryer \cite{Cryer06}, the most powerful and effective tool for modeling anomalous diffusion processes is the continuous time random walk model, which can describe different anomalous diffusion processes based on different assumptions about jump lengths and waiting times. When the mean square displacement of a particle is a nonlinear function of time, it is possible to describe more accurately certain physical phenomena whose important feature is that the flux of a particle is related not only to its neighborhood, but also to other points throughout space and its history of change, showing a strong historical memory and long-range correlation.

Since the class of time-dependent problems (\ref{eq1.1}) arise in various models of physical processes (see \cite{Jin19,Li10,Metzler00} and therein), these problems and their numerical approximations have attracted much attention in recent years; see, e.g., recent literature  \cite{Banjai19,Chen19,Cheng20,Jin20,Wang20,Zhang20}. There are several predominant classes of numerical methods for discretizing the time fractional derivative, and we focus here on the convolution quadrature (CQ)  method \cite{Cao16,Chen17,Diethelm06,Jin16,Jin17,Jin18,Lubich86,Lubich88,Lubich04,Zayernouri16,Zeng13,Zeng17}, the spectral methods \cite{Jin19,Li10,Jin20,Diethelm20,Li09,Lischke17,Stynes17}, and the finite difference type methods \cite{Li17,Lin07,Lv16,Sun06,Yan14,Diethelm97,Gao14}, etc.

As we all know, CQ inherits excellent numerical stability property of the underlying schemes for ODEs and spectral methods could achieve high order of accuracy with smooth solutions. However, with the study on the regularity of fractional differential equations, it is found that the fractional derivative have initial singularity, i.e., the solutions have weak singularity at the initial moment, which means that the high regularity requirement of the numerical algorithm is not feasible or meaningful. It is worth mentioning that the most time  stepping schemes (including CQ method and the finite difference type methods) on uniform mesh only exhibits a first-order of accuracy when solving fractional evolution equations  even for smooth source function. Recently, to recover the optimal convergence order for nonsmooth solutions of the equations, some special nonuniform meshes have been proposed by taking into account the initial singularity in the problem (\ref{eq1.1}), such as graded mesh, quasi-graded mesh, general nonuniform mesh, and so on (see, e.g., \cite{Stynes17,Liao18,Kopteva19,Kopteva20,Yan18}). Therefore, it is natural to choose numerical schemes which require low regularity and can be easily extended to nonuniform mesh for solving time fractional differential equations. Unlike spectral methods which require high regularity and CQ method often restricted to uniform mesh, finite difference methods based on piecewise polynomial approximation require lower regularity and can be easily extended to nonuniform mesh. As the most famous finite difference method for solving fractional differential equations, L1 method has been widely concerned and studied because of its flexible construction and implementation (see, e.g., \cite{Li18,Li19,Brunner10,Ford11,Ford12,Jin162}). In this paper, we will focus on this scheme.

A posteriori error estimates can be viewed as such type of error estimates which can be quantified for a given simulation, knowing only the problem data and approximate solution. Such computable a posteriori error estimates have been investigated by many researchers for various numerical methods for integer-order parabolic problems during the last decades (see, e.g., \cite{Akrivis06,Bansch12,Lozinski09,Verfurth03,Wang18,Wang201,Wang21,Wang22}). In many cases, a posteriori error estimates and adaptivity have become very successful tools for efficient numerical computations of linear and nonlinear integer-order problems. For TFPDEs (\ref{eq1.1}), however, to the best of our knowledge, there are very limited papers concerning a posteriori error estimate of numerical methods. For examples, Kopteva \cite{Kopteva21} gave the pointwise-in-time a posteriori error bounds of the L1 method in the $L^2$ and $L^{\infty}$ norms, and designed the adaptive algorithms based on the a posteriori error estimates; Banjai and Makridakis \cite{Banjai22} derived a posteriori error estimates of the L1 method or CQ method  in the $L^2$ and $L^{\infty}$ norms; Franz and Kopteva considered \cite{Franz22}  pointwise-in-time a posteriori error bounds of higher-order discretizations of time-fractional parabolic equations; Kopteva and Stynes \cite{Kopteva22} obtained a posteriori error bounds in $L^2$ and $L^{\infty}$ for the variable-coefficient multiterm time-fractional subdiffusion equations. We introduce reconstruction-based a posteriori error estimates of the L1 method for TFPDEs in \cite{CJL22}. All of the above literatures consider only a posteriori error estimates in the semi-discrete scheme of the subdiffusion equation, which motivates us to further investigate a posteriori error estimates in the fully discrete scheme. In this paper, we introduce the linear and quadratic space-time reconstructions to derive a posteriori error estimates in the $L^\infty(L^2)$ and $L^1(L^2)$-norms for TFPDEs. The main difficulty in deriving the optimal order a posteriori error estimates for the finite difference type methods is to obtain the error equations which involves long-range history dependence. In this paper, we will address this issue. The numerical experiments indicate that the a posteriori estimates for the fully discrete scheme  could achieve optimal convergence order on the graded mesh regardless of whether the solution of TFPDES are smooth or not. Finally, we briefly highlight the main contribution of this paper as follows.

\begin{itemize}
\item{
It is the first time that a posteriori error estimates of the fully discrete scheme are derived for TFPDEs. In contrast to the case of semi discrete scheme, the difficulties in deriving the a posteriori error estimate of the fully discrete scheme mainly come from the elliptic space reconstruction operator $\mathcal R^n$. Firstly, because the domain $V_0^n$ of the elliptic reconstruction operator $\mathcal R^n$ is different at different time nodes $t_n$, the derivative and fractional derivative of the reconstruction solution $U_{\mathcal R}$ and $\widehat U_{\mathcal R}$ containing the elliptic reconstruction operator are more complicated. Secondly, since the elliptic reconstruction operator $\mathcal R^n$ is a continuous abstract operator, it leads to the fact that a posteriori error estimate cannot be computed directly. By introducing the duality technique, we obtain error estimates of the elliptic space operator and its derivative, which is the basis and tool for us to further obtain computable expressions of a posteriori error estimators. More details can be found in the Appendices.
}
\item{
 For TFPDEs, no other work has studied a posteriori error estimate in the $L^1(L^2)$-norm. From Corollarys \ref{cor1} and \ref{cor2}, we  can see that a posteriori error estimate of the fully discrete scheme in the $L^1(L^2)$-norm contains the results of a posteriori error estimate in the $L^2(L^2)$-norm. In this sense, a posteriori error estimate in the $L^1(L^2)$-norm is more applicable than a posteriori error estimate in the $L^2(L^2)$-norm, and  this is also the reason why we derive a posteriori error estimate in the $L^1(L^2)$-norm.
}
\item{
As a conclusion of the above theoretical results, global and pointwise final error estimates of the fully discrete scheme for TFPDEs are obtained.
}
\end{itemize}

The paper is organized as follows. In Section 2, we introduce the notations and preliminaries, and then give the finite element semi-discrete approximations to TFPDEs. In Section 3, we  approximate the Caputo fractional derivative and introduce the linear time reconstruction, quadratic time reconstruction and elliptic space reconstruction . In Section 4, we obtain a posteriori error estimate for the linear space-time reconstruction and quadratic space-time reconstruction in the $L^\infty(L^2)$ and $L^1(L^2)$-norms. In Section 5, we obtain global and pointwise final error estimates for the linear space-time reconstruction and quadratic space-time reconstruction. In Section 6, the numerical examples are presented to demonstrate effectiveness of numerical schemes and confirm the theoretical analysis. Finally, we will give the computable expressions of a posteriori error estimators in the Appendixes.

\section{Finite element semi-discrete approximations to TFPDEs}\label{2}

\subsection{Model problem} The model problem we consider is to find $u:\Omega\times [0,T]\to \mathbb R$ satisfying
\begin{eqnarray}
\partial^\alpha_tu(x,t)+{\mathcal A}u(x,t)&=&f(x,t),\quad (x,t)\in \Omega\times (0,T],\label{eq1.1}\\
u(x,t)&=&0,~~~\qquad (x,t)\in \partial\Omega\times (0,T],\\
u(x,0)&=&u_0(x),\quad x\in \Omega,\label{eq1.3}
\end{eqnarray}
where $\Omega\subset {\mathbb R}^d$, $d\in \mathbb N$, is a bounded convex polygonal domain with the boundary $\partial \Omega$,$~T<\infty$  is a constant, $u_0$ is a given initial function, $f(x,t)$ is a given forcing term. Here operator $\mathcal A$  is of the form,
$$\mathcal Au=-\nabla\cdot(A(x)\nabla u),$$
with $\mathcal A$ being symmetric positive definite, where \textquotedblleft $\nabla$\textquotedblright~ denotes the spatial gradient operator, the matrices $A$ are assumed to be smooth. Concerning the existence and uniqueness results of (\ref{eq1.1})-(\ref{eq1.3}), we refer to \cite{Stynes17} and the references therein.

\subsection{Notations and preliminaries}
Given a Lebesgue measurable set $\omega\subset {\mathbb R}^d$, we denote the Lebesgue space by $L^p(\omega)~(p\ge 1)$ and the standard Sobolev space by $W^{s,p}(\omega)~(s\ge 0,~p\ge 1)$, where their corresponding norms are $\|\cdot\|_{L^p(\omega)}$ and $\|\cdot\|_{s,p,\omega}$. For convenience, we denote the norms of the
spaces $H^s(\omega)=W^{s,2}(\omega)$, $L^\infty(\omega)$ and $L^2(\omega)$ by $\|\cdot\|_{s,\omega}$, $\|\cdot\|_{\infty,\omega}$ and $\|\cdot\|_{\omega}$, respectively. The function space $H^1_0(\omega)$ denotes the space $H^1(\omega)$ vanishing on the boundary of $\omega$, and the inner product of space $L^2(\omega)$ is denoted by $\langle\cdot,\cdot\rangle_\omega$. When $\omega=\Omega$, the letter $\omega$ will be omitted in the subscripts of $\|\cdot\|_{s,p,\omega}$, $\|\cdot\|_{s,\omega}$, $\|\cdot\|_{\infty,\omega}$, $\|\cdot\|_{\omega}$ and $\langle\cdot,\cdot\rangle_\omega$.

In a natural way, the Leberger space $L^p(J;X)~(1\le p< \infty)$ with the interval $J$ and Banach space $X$ (here, $X=H^{-1}(\omega)$, $L^2(\omega)$, $H^1_0(\omega)$, or $H^1(\omega)$), is composed of all those functions $u(t)$ which take values in $X$ for almost every $t\in J$ and whose norm  on the space $L^p(J;X)$ is finite, i.e., $$\|u\|_{L^p(J;X)}=\left(\int_J\|u(\cdot,t)\|_X^pdt\right)^{1/p}<\infty.$$
For $p=\infty$, $L^\infty(J;X)$ is the Banach space of (classes of) measurable functions from $J$ into $X$, which is essentially bounded and whose norm is denoted by
$$\|u\|_{L^{\infty}(J;X)}:=\esssup_{t\in J} \|u(\cdot,t)\|_X.$$

Then, corresponding to the elliptic operators $\mathcal A$, we introduce the bilinear forms $a(\cdot,\cdot)$, which is  defined on
$H^1_0(\Omega)\times H^1_0(\Omega)$ by
$$a(\varphi,\chi):=\langle A\nabla\varphi,\nabla\chi\rangle,\qquad\forall \varphi,\chi\in H^1_0(\Omega).$$
It is not difficult to prove that the bilinear forms $a(\cdot,\cdot)$ is coercive and continuous on $H^1_0(\Omega)$, i.e.,
 \begin{eqnarray*}%\label{eq2.3a}
 a(\varphi,\varphi)\ge \eta \|\varphi\|^2_1,\quad |a(\varphi,\chi)|\le \beta \|\varphi\|_1\|\chi\|_1,\quad \forall \varphi,\chi\in H^1_0(\Omega)
 \end{eqnarray*}
 with $\eta,\beta\in \mathbb R^+$.

With the above notations and assumptions in hand, the problem (\ref{eq1.1})-(\ref{eq1.3}) can be therefore written in weak form: suppose $f\in C(0,T;L^{2}(\Omega))$, find $u\in C^1(0,T; H^1_0(\Omega))$ such that
\begin{eqnarray}\label{eq2.1}
\langle \partial^\alpha_tu,\varphi\rangle +a(u(t),\varphi)&=&\langle f,\varphi\rangle, ~~~\forall \varphi\in H^1_0(\Omega),~t\in (0,T],\\
u(\cdot,0)&=&u_0(\cdot).\label{eq2.2a}
\end{eqnarray}

In order to study the properties of fractional derivatives,  we introduce the following two lemmas, which are the cornerstones of the theoretical analysis.
\begin{lemma} \label{lem1}
Let $\alpha \in (0,1)$ and $g\in C(0,T;H)\bigcap W^{1,\infty}(\delta,t;H)$, where the limit $ \delta \rightarrow 0^+$ holds and $H$ is a Hilbert space with inner product $(\cdot,\cdot)$ and norm $||\cdot||$. Then
\begin{eqnarray*}
\int_0^T(\partial^\alpha_tg(t),g(t))dt \geq &\frac{1}{2\Gamma(1-\alpha)}\int_0^T ((T-t)^{-\alpha}+t^{-\alpha})||g(t)||^2dt\\
&-\frac{1}{\Gamma(1-\alpha)}\int_0^Tt^{-\alpha}(g(0),g(t))dt.
\end{eqnarray*}
\end{lemma}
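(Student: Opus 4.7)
The plan is to prove this coercivity-type bound by recasting the Caputo derivative through a fractional integration-by-parts, applying a standard inner-product identity to pull out $\|g(t)\|^2-\|g(s)\|^2$, and then evaluating the resulting double integral exactly via Fubini together with a further integration by parts in $t$.

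First I would rewrite the Caputo derivative in its ``difference-quotient'' form. Starting from $\partial^\alpha_t g(t)=\frac{1}{\Gamma(1-\alpha)}\int_0^t(t-s)^{-\alpha}g'(s)\,ds$, I integrate by parts in $s$ using the regularity $g\in W^{1,\infty}(\delta,t;H)$ for all small $\delta>0$. The boundary contribution at $s=t$ vanishes because $\|g(s)-g(t)\|=O(t-s)$ absorbs the $(t-s)^{-\alpha}$ singularity, and the boundary term at $s=0$ produces the $g(0)$ contribution. This yields
\begin{equation*}
\partial^\alpha_t g(t)=\frac{g(t)-g(0)}{\Gamma(1-\alpha)\,t^{\alpha}}+\frac{\alpha}{\Gamma(1-\alpha)}\int_0^t\frac{g(t)-g(s)}{(t-s)^{\alpha+1}}\,ds.
\end{equation*}

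Next, I take the inner product with $g(t)$ and apply the elementary identity $(a,a-b)=\tfrac12\|a\|^2-\tfrac12\|b\|^2+\tfrac12\|a-b\|^2$, dropping the nonnegative $\tfrac12\|a-b\|^2$ piece inside the convolution (and keeping $\|g(t)\|^2-(g(0),g(t))$ intact in the leading boundary term, since the conclusion wants $(g(0),g(t))$ rather than $\|g(0)\|^2$). This gives the pointwise lower bound
\begin{equation*}
(\partial^\alpha_t g(t),g(t))\ge\frac{\|g(t)\|^2-(g(0),g(t))}{\Gamma(1-\alpha)\,t^{\alpha}}+\frac{\alpha}{2\Gamma(1-\alpha)}\int_0^t\frac{\|g(t)\|^2-\|g(s)\|^2}{(t-s)^{\alpha+1}}\,ds.
\end{equation*}
Integrating in $t$ over $[0,T]$ immediately produces the required $\frac{1}{\Gamma(1-\alpha)}\int_0^T t^{-\alpha}\|g(t)\|^2\,dt$ and the exact form of the $(g(0),g(t))$ term on the right-hand side.

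The remaining task, which is the main technical step, is to evaluate the double integral
\begin{equation*}
K:=\frac{\alpha}{2\Gamma(1-\alpha)}\int_0^T\!\!\int_0^t\frac{\|g(t)\|^2-\|g(s)\|^2}{(t-s)^{\alpha+1}}\,ds\,dt.
\end{equation*}
I would swap the order of integration by Fubini and then integrate by parts in $t$ on the inner integral using $\frac{d}{dt}(t-s)^{-\alpha}=-\alpha(t-s)^{-\alpha-1}$; the boundary term at $t=s$ vanishes by local Lipschitz continuity of $\|g\|^2$, while the $t=T$ term produces an integrand of the form $(T-s)^{-\alpha}(\|g(T)\|^2-\|g(s)\|^2)$. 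The leftover integral $\int_0^t(t-s)^{-\alpha}\frac{d}{dt}\|g(t)\|^2\,dt$ is again integrated by parts in $t$, using $\int_0^t(t-s)^{-\alpha}ds=t^{1-\alpha}/(1-\alpha)$ and $\Gamma(2-\alpha)=(1-\alpha)\Gamma(1-\alpha)$. The boundary contributions at $t=T$ proportional to $T^{1-\alpha}\|g(T)\|^2$ cancel exactly between the two pieces, leaving the clean identity
\begin{equation*}
K=\frac{1}{2\Gamma(1-\alpha)}\int_0^T\bigl[(T-t)^{-\alpha}-t^{-\alpha}\bigr]\|g(t)\|^2\,dt.
\end{equation*}
Adding this to the contribution from the first step turns $\frac{1}{\Gamma(1-\alpha)}t^{-\alpha}$ into $\frac{1}{2\Gamma(1-\alpha)}[t^{-\alpha}+(T-t)^{-\alpha}]$, which is exactly the desired bound.

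The principal obstacle is the exact evaluation of $K$: one must carefully manage the singularities at $s=t$ and $t=0$ so that each integration by parts is legitimate. This is precisely where the hypothesis $g\in C(0,T;H)\cap W^{1,\infty}(\delta,t;H)$ (as $\delta\to 0^+$) is used, supplying the local Lipschitz control that kills every boundary term and keeps the iterated integrals absolutely convergent; the rest of the argument is then mechanical.
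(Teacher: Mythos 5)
Your proposal is correct, but it takes a genuinely different route from the paper's proof. The paper follows Lemma 4.1 of \cite{Banjai22}: it regularizes the kernel, replacing $(t-\tau)^{-\alpha}$ by $(t-\tau+\eta)^{-\alpha}$ with $\eta>0$, integrates by parts inside the now nonsingular double integral, applies the symmetrization $g(\tau)g(t)\le\tfrac12\bigl(g^2(\tau)+g^2(t)\bigr)$ under the negative kernel, evaluates the resulting kernel integrals, and only at the very end passes to the limits $\eta\to0^+$ and $\delta\to0^+$; the $(g(0),g(t))$ term arises there as the limit of the boundary term $g(\delta)\int_0^T(t-\delta+\eta)^{-\alpha}g(t)\,dt$. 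You instead work directly with the singular kernel: you convert the Caputo derivative pointwise into its Marchaud (difference-quotient) form, apply the polarization bound $(g(t)-g(s),g(t))\ge\tfrac12\bigl(\|g(t)\|^2-\|g(s)\|^2\bigr)$, and then evaluate the leftover double integral $K$ exactly by Fubini and two integrations by parts; I checked your identity $K=\frac{1}{2\Gamma(1-\alpha)}\int_0^T\bigl[(T-t)^{-\alpha}-t^{-\alpha}\bigr]\|g(t)\|^2\,dt$ (the $T^{1-\alpha}\|g(T)\|^2$ boundary terms do cancel), and the final assembly reproduces the lemma. The two arguments share the same algebraic heart (symmetrizing the off-diagonal product into squares and computing kernel moments), but yours trades the regularization parameter for an exact evaluation, which makes it shorter and structurally cleaner. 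What the paper's $\eta$-regularization buys is robustness under the weak hypothesis $g\in C(0,T;H)\cap W^{1,\infty}(\delta,t;H)$: with the bounded kernel $(t-\tau+\eta)^{-\alpha-1}$, Fubini and every integration by parts are trivially legitimate, and all singular behavior is deferred to a single limit. In your argument, the absolute integrability needed for Fubini near the corner $s,t\to 0$ (where the Lipschitz constant of $\|g\|^2$ may blow up) and the vanishing of the diagonal boundary terms are exactly the delicate points; you acknowledge them at the end, but making them rigorous would require the same $\delta$-truncation-and-limit device the paper uses, so the fully detailed versions of the two proofs end up of comparable length.
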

\begin{proof}
The line of proof of this lemma mainly follows the Lemma 4.1 in \cite{Banjai22}, but the conclusion still holds after relaxing the regularity assumption from $g\in C^1(0,T;H)$ to $g\in C(0,T;H)\bigcap W^{1,\infty}(\delta,t;H)$. It is worth noting that the relaxed regularity assumption is more appropriate for our problem. Then set $H=R$, and let $ \eta>0 $ and consider
 \begin{equation}\label{1.4}
\begin{aligned}
&\int_0^T\int_0^t(t-\tau+\eta)^{-\alpha}g'(\tau)g(t)d\tau dt= \int_0^T\int_0^\delta(t-\tau+\eta)^{-\alpha}g'(\tau)g(t)d\tau dt \\
&\qquad-\alpha \int_0^T\int_\delta^t(t-\tau+\eta)^{-\alpha-1}g(\tau)g(t)d\tau dt+ \eta^{-\alpha}\int_0^Tg^2(t)dt-g(\delta)\int_0^T(t-\delta+\eta)^{-\alpha}g(t)dt.
\end{aligned}
\end{equation}
Similar to the process of the proof of Lemma 4.1 in \cite{Banjai22}, the right three terms of equation (\ref{1.4}) lead to the following inequality:
 \begin{equation}\label{1.5}
\begin{aligned}
&-\alpha \int_0^T\int_\delta^t(t-\tau+\eta)^{-\alpha-1}g(\tau)g(t)d\tau dt+ \eta^{-\alpha}\int_0^Tg^2(t)dt-g(\delta)\int_0^T(t-\delta+\eta)^{-\alpha}g(t)dt \\
&\quad \geq\frac{1}{2}\int_0^{T}(T-\tau-\eta)^{-\alpha}g^2(\tau)d\tau+\frac{1}{2}\int_0^T(t-\delta+\eta)^{-\alpha}g^2(t)dt-g(\delta)\int_0^T(t-\delta+\eta)^{-\alpha}g(t)dt\\
&\quad \quad+\frac{1}{2}\int_0^\delta \eta^{-\alpha}g^2(\tau)d\tau.
\end{aligned}
\end{equation}
Taking the limits $ \eta \rightarrow 0^+$ and $ \delta \rightarrow 0^+$,  we can obtain the following inequality from (\ref{1.5})
 \begin{equation}\label{1.6}
\begin{aligned}
&-\alpha \int_0^T\int_\delta^t(t-\tau+\eta)^{-\alpha-1}g(\tau)g(t)d\tau dt+ \eta^{-\alpha}\int_0^Tg^2(t)dt-g(\delta)\int_0^T(t-\delta+\eta)^{-\alpha}g(t)dt \\
&\quad \geq\frac{1}{2\Gamma(1-\alpha)}\int_0^T ((T-t)^{-\alpha}+t^{-\alpha})||g(t)||^2dt-\frac{1}{\Gamma(1-\alpha)}\int_0^Tt^{-\alpha}(g(0),g(t))dt.
\end{aligned}
\end{equation}
At the same time, when $\delta$ tends to 0, then $g(\tau) = g(0)~(\tau \in [0,\delta])$ for the first term on the right-hand side of equation (\ref{1.4}), so that $g'(\tau) = 0~(\tau \in [0,\delta])$ holds, and then the term $\int_0^T\int_0^\delta(t-\tau+\eta)^{-\alpha}g'(\tau)g(t)d\tau dt$ also tends to 0, thus we complete the proof of the Lemma \ref{lem1}.
\end{proof}

\begin{lemma} [\cite{Banjai22}] \label{lem2}
For any fixed $\theta \in (0,\pi/2)$, and let $u$ be the solution of (\ref{eq1.1}) with the additional assumption $u_0\in D(\mathcal A)$. Then the bound
\begin{eqnarray*}
||u(t)||\leq ||u_0||+\frac{1}{\sin\theta}C_{\alpha,\varphi}\int_0^t(t-\tau)^{\alpha-1}||f(\tau)-\mathcal Au_0||d\tau
\end{eqnarray*}
holds for $t>0$, where $\varphi=\max(0,\pi-(\pi-\theta)/\alpha)$ and $C_{\alpha,\varphi}=\frac{1}{\pi}(\cos\varphi)^{\alpha-1}\Gamma(1-\alpha).$
\end{lemma}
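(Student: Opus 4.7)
The plan is to reduce to zero initial data, represent the resulting inhomogeneous problem by a Duhamel formula, and estimate the convolution kernel via a Bromwich--Hankel contour integral together with the sectoriality of $\mathcal A$. Setting $v(t):=u(t)-u_0$, the Caputo derivative of the constant $u_0$ vanishes, so $v$ satisfies $\partial_t^\alpha v+\mathcal A v=g$ with $g(t):=f(t)-\mathcal A u_0$ (which lies in $L^2$ since $u_0\in D(\mathcal A)$) and $v(0)=0$; the triangle inequality $\|u(t)\|\le\|u_0\|+\|v(t)\|$ reduces the problem to bounding $v$. Applying the Laplace transform in $t$ and using $v(0)=0$ yields $(z^\alpha+\mathcal A)\hat v(z)=\hat g(z)$, so $\hat v=(z^\alpha+\mathcal A)^{-1}\hat g$, and by the convolution theorem
\begin{equation*}
v(t)=\int_0^t K(t-\tau)\,g(\tau)\,d\tau,\qquad K(t)=\frac{1}{2\pi i}\int_\Gamma e^{zt}(z^\alpha+\mathcal A)^{-1}\,dz,
\end{equation*}
for a Bromwich contour $\Gamma$.

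For the kernel estimate I would deform $\Gamma$ into a Hankel contour $\Gamma_\alpha$ formed by the two rays $z=re^{\pm i(\pi-\theta)/\alpha}$, $r\ge 0$ (collapsing to the ray at angle $\pm\pi$ when $(\pi-\theta)/\alpha\ge\pi$, which is exactly the case $\varphi=0$). Analyticity of $(z^\alpha+\mathcal A)^{-1}$ on $\mathbb C\setminus(-\infty,0]$ together with its decay at infinity legitimizes the deformation. On $\Gamma_\alpha$, $z^\alpha=r^\alpha e^{\pm i(\pi-\theta)}$, so $-z^\alpha$ has positive real part $r^\alpha\cos\theta$ and imaginary part of modulus $r^\alpha\sin\theta$; hence $\mathrm{dist}(-z^\alpha,[0,\infty))=r^\alpha\sin\theta$. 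Since $\mathcal A$ is self-adjoint and nonnegative, the spectral theorem gives
\begin{equation*}
\|(z^\alpha+\mathcal A)^{-1}\|\le\frac{1}{r^\alpha\sin\theta}.
\end{equation*}
Moreover $|e^{zt}|=\exp\bigl(rt\cos((\pi-\theta)/\alpha)\bigr)=\exp(-rt\cos\varphi)$ with $\varphi=\pi-(\pi-\theta)/\alpha$, and a direct integration gives
\begin{equation*}
\|K(t)\|\le\frac{1}{\pi\sin\theta}\int_0^\infty\frac{e^{-rt\cos\varphi}}{r^\alpha}\,dr=\frac{(\cos\varphi)^{\alpha-1}\Gamma(1-\alpha)}{\pi\sin\theta}\,t^{\alpha-1}=\frac{C_{\alpha,\varphi}}{\sin\theta}\,t^{\alpha-1}.
\end{equation*}

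Plugging this bound into the Duhamel representation and applying the triangle inequality $\|u(t)\|\le\|u_0\|+\|v(t)\|$ gives the claimed inequality. The main obstacle is the simultaneous handling of the two angle parameters in the kernel step: the resolvent estimate produces the factor $1/\sin\theta$ only after the geometric identification $\mathrm{dist}(-z^\alpha,[0,\infty))=r^\alpha\sin\theta$ along $\Gamma_\alpha$, while the exponential decay of $|e^{zt}|$ contributes the constant $(\cos\varphi)^{\alpha-1}\Gamma(1-\alpha)/\pi$. The prescription $\varphi=\max(0,\pi-(\pi-\theta)/\alpha)$ is precisely what makes the same argument cover both the regime $\alpha\ge(\pi-\theta)/\pi$, where the Hankel rays lie strictly in the open left half-plane, and the complementary regime where the contour collapses to the cut $(-\infty,0)$. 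Justifying the contour deformation at the arcs near zero and at infinity under the minimal regularity of $g$ provided by $u_0\in D(\mathcal A)$ requires additional care but is routine.
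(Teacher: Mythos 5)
The paper itself gives no proof of this lemma: it is quoted directly from \cite{Banjai22}, and your proposal reconstructs essentially that source's argument (reduction to zero initial data via $v=u-u_0$, Duhamel representation through the Laplace transform, Hankel contour at angles $\pm(\pi-\theta)/\alpha$, sectorial resolvent bound $\|(z^\alpha+\mathcal A)^{-1}\|\le (r^\alpha\sin\theta)^{-1}$), and your kernel computation reproduces the constant $C_{\alpha,\varphi}/\sin\theta$ exactly. The single imprecision is that the identity $\mathrm{dist}(-z^\alpha,[0,\infty))=r^\alpha\sin\theta$ is only valid in the non-collapsed regime; when $(\pi-\theta)/\alpha\ge\pi$ the contour lies on the cut, so $z^\alpha=r^\alpha e^{\pm i\alpha\pi}$ and one needs instead the inequality $\mathrm{dist}(-z^\alpha,[0,\infty))\ge r^\alpha\sin\theta$, which follows from $\theta\le\pi(1-\alpha)$ together with monotonicity of $\sin$ on $[0,\pi/2]$ (the distance being $r^\alpha\sin(\pi(1-\alpha))$ if $\pi(1-\alpha)\le\pi/2$ and $r^\alpha$ otherwise) --- a routine fix that does not affect the conclusion.
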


\subsection{Finite element discretization} In this paper, we will use the finite element method for the spatial discretization. Let $0=t_0<t_1<\ldots<t_N=T$ be a partition of the interval $[0,T]$ with $I_n:=(t_{n-1},t_n]$, $k_n=t_n-t_{n-1}$ and $k:=\max_{1\le n\le N}k_n$.  Then corresponding to the time node $t_n$, a family $\{\mathcal T_n\}^N_{n=0}$ of conforming shape-regular triangulations of the domain $\Omega$ is introduced. It is worth noting that the triangulations here may change over time.  Let
$$h_n(x)=h_K:={\hbox{diam}}(K),\qquad K \in \mathcal T_n\quad {\hbox{and}} \quad x\in K,$$
denote the local mesh-size function of each given triangulation $\mathcal T_n$ and let $h:=\max_{0\le n\le N}h_n$. For each $n$ and each $K\in \mathcal T_n$, let $\overline \partial_n K$ be the set of the sides of $K$ (edges in $d=2$ or faces in $d=3$) and $\partial_n K\subset \overline\partial_n K$ be the set of the internal sides of $K$, i.e., the sides that do not belong to the boundary of $\Omega$. In addition, the sets $\overline \Sigma_n:=\bigcup_{K\in \mathcal T_n}\overline\partial_n K$ and $\Sigma_n:=\bigcup_{K\in \mathcal T_n}\partial_n K$ are also introduced. Therefore, corresponding to $\mathcal T_n$, we associate the following finite element space:
$$V^n=\left\{v_h\in H^1(\Omega)|v_{h|K}\in \mathcal P_r, ~\forall K\in \mathcal T_n\right\},$$
where $\mathcal P_r$ denotes the space of polynomials of degree
at most $r$ with $r\in \mathbb Z^+$. Furthermore, we set
\begin{eqnarray*}
V^n_0=V^n\cap H^1_0.
\end{eqnarray*}

Following \cite{Lakkis06}, for a function $v\in V^n_0$, the bilinear form $a(v,\varphi)$ can be represented as
\begin{eqnarray}\label{eq2.11}
a(v,\varphi)=\sum\limits_{K\in \mathcal T_n}\langle -{\rm div}(A\nabla v),\varphi\rangle_K+\sum\limits_{\partial_n K\in \Sigma_n}\langle J_A[v],\varphi\rangle_{\partial_n K},
\end{eqnarray}
where $J_A[v]$ is the spatial jump of the field $A\nabla v$ across an element side $\partial_n K\in \Sigma_n$ defined as
\begin{eqnarray*}%\label{eq2.12}
 &&J_A[v]|_{\partial_n K}(x)=[A\nabla v]_{\partial_n K}(x):=\lim\limits_{\varepsilon\to 0}(A\nabla v(x+\varepsilon \nu_K)-A\nabla v(x-\varepsilon \nu_K))\cdot \nu_K,
\end{eqnarray*}
where $\nu_K$ is a unit normal vector to $\partial_n K$ at the point $x$. For a finite element function $v\in V^n_0$, denote by $\mathcal A_{el}$ the regular part of the distribution $-{\rm div}(A\nabla v)$, which is defined as a piecewise continuous function such that
\begin{eqnarray*}
\langle \mathcal A_{el}v,\varphi\rangle =\sum\limits_{K\in \mathcal T_n}\langle -{\rm div}(A\nabla v),\varphi\rangle_K,\qquad \forall \varphi\in H^1_0(\Omega).
\end{eqnarray*}
Thus, the representation (\ref{eq2.11}) can be written in the shorter form,
\begin{eqnarray*}%\label{eq2.14}
a(v,\varphi)=\langle \mathcal A_{el}v,\varphi\rangle +\langle J_A[v],\varphi\rangle_{\Sigma_n},\qquad \forall \varphi\in H^1_0(\Omega).
\end{eqnarray*}

With the help of the bilinear form, we introduce the discrete elliptic operators in the following Definition, which is the basis of elliptic space reconstruction.
\begin{definition}[Discrete elliptic operators, \cite{Wang21}] The discrete elliptic operator associated with the bilinear form $a$ and the finite element space $V^n_0$ is the operator ${\mathcal A^n_h}:H^1_0\to V^n_0$ defined by
\begin{equation*}
\langle {\mathcal A^n_h}v,\varphi_h\rangle=a(v,\varphi_h), \qquad \forall \varphi_h\in V^n_0.
\end{equation*}
\end{definition}

At the end of this section, based on the theoretical analysis framework of the finite element method, we  introduce the $L^2(\Omega)$-projection operator and the Cl\'ement-type interpolant. Among them,  the $L^2(\Omega)$-projection operator $P^n_0:L^2(\Omega)\to V^n_0$ satisfies
$$\langle P^n_0v-v,\varphi_h\rangle=0,\qquad \forall \varphi_h\in V^n_0,$$
and the stability property and the approximation properties of the Cl\'ement-type interpolant are studied in the following Proposition.

\begin{proposition}[\cite{Lakkis06,Clement75,Scott90,Brenner08}]\label{pro2.2} Let $\Pi^n_h:H^1_0(\Omega)\to V_0^n$ be the  Cl\'ement-type interpolant. Then
\begin{eqnarray*}
\|\Pi^n_hv\|_1\le C_\Pi\|v\|_1.
\end{eqnarray*}
Further, for $j\le r+1$, the following approximation properties are satisfied:
\begin{eqnarray*}
\|h^{-j}_n(v-\Pi^n_hv)\|\le C_{\Pi,j}\|v\|_j,\\
\|h^{1/2-j}_n(v-\Pi^n_hv)\|_{\Sigma_n}\le C_{\Pi,j}\|v\|_j,
\end{eqnarray*}
where $r$ is the finite element polynomial degree, and the constants $C_{\Pi}$ and $C_{\Pi,j}$ depend only on the shape regularity \cite{Brenner08} of the family of triangulations $\{\mathcal T_n\}^N_{n=0}$.
\end{proposition}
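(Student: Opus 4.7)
The plan is to reduce everything to local estimates on element patches and then assemble, which is the standard route for Clément-type interpolation operators. First I would fix a precise definition: for each internal Lagrange node $z$ of the triangulation $\mathcal T_n$, let $\omega_z$ denote the patch formed by the elements sharing $z$, let $P_z:L^2(\omega_z)\to\mathcal P_r(\omega_z)$ be the $L^2$-orthogonal projection, and set
$$\Pi^n_h v=\sum_{z\,\text{internal}}(P_z v)(z)\,\phi_z,$$
where $\phi_z$ is the nodal basis function in $V^n$. Dropping boundary nodes from the sum automatically ensures $\Pi^n_h v\in V^n_0$ whenever $v\in H^1_0(\Omega)$, which is the only place the homogeneous boundary condition enters.

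The core local estimate comes from a scaling/Bramble--Hilbert argument on the reference patch: for $0\le j\le r+1$,
$$\|v-P_z v\|_{L^2(\omega_z)}\le C\,h_{\omega_z}^{\,j}\,|v|_{H^j(\omega_z)},\qquad |v-P_z v|_{H^1(\omega_z)}\le C\,h_{\omega_z}^{\,j-1}\,|v|_{H^j(\omega_z)},$$
with constants depending only on the shape regularity of $\{\mathcal T_n\}$. On a given element $K$, I would use the partition of unity $\sum_{z\in K}\phi_z\equiv 1$ to write $v-\Pi^n_h v=\sum_{z\in K}(v-(P_z v)(z))\phi_z$, then bound each term by combining the local $L^2$ estimate above with an inverse inequality that trades $(P_z v)(z)$ (a point value) against $\|P_z v\|_{L^2(\omega_z)}$. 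Summing the squares over $K\in\mathcal T_n$ and exploiting the bounded overlap of the patches $\{\omega_z\}$ delivers the second asserted bound after multiplication by $h_n^{-j}$.

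For the trace bound I would pass from $K$ to its sides via the scaled trace inequality
$$\|w\|_{L^2(\partial K)}\le C\bigl(h_K^{-1/2}\|w\|_{L^2(K)}+h_K^{1/2}|w|_{H^1(K)}\bigr),$$
applied with $w=v-\Pi^n_h v$, and then invoke both the $L^2$ and $H^1$ local estimates from the previous step. Summing over the internal sides in $\Sigma_n$ and multiplying by $h_n^{1/2-j}$ produces the third estimate. Finally, the stability bound $\|\Pi^n_h v\|_1\le C_\Pi\|v\|_1$ is a consequence of the $j=1$ case of the $H^1$ approximation estimate combined with the triangle inequality.

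The step that requires the most care is not any single inequality but the bookkeeping near the boundary: because nodes on $\partial\Omega$ are removed from the sum, the partition of unity identity fails on boundary elements, so on those elements one has to add and subtract $v$ itself and use the fact that $v\in H^1_0(\Omega)$ vanishes on $\partial\Omega$ to absorb the missing contribution via a Poincaré-type inequality on the patch. Once that is handled, the patch-overlap summation is routine and the constants depend only on shape regularity, as claimed.
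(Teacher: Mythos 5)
The paper never proves Proposition \ref{pro2.2}: it is quoted verbatim from the cited references (Cl\'ement, Scott--Zhang, Brenner--Scott, Lakkis--Makridakis), so your proposal can only be compared with the standard literature argument. Your overall architecture is exactly that argument --- local $L^2$ projections $P_z$ on nodal patches, Bramble--Hilbert plus scaling, partition of unity, a scaled trace inequality for the $\Sigma_n$ bound, bounded patch overlap, and a Friedrichs-type absorption on boundary elements where the partition of unity is broken by dropping boundary nodes. Your handling of that last point (the only place $v\in H^1_0(\Omega)$ enters) is correct and is indeed the delicate part of making $\Pi^n_h$ map into $V^n_0$.

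There is, however, one step that fails as written: bounding each term of $v-\Pi^n_hv=\sum_{z\in K}\bigl(v-(P_zv)(z)\bigr)\phi_z$ \emph{separately}. The summand $\bigl(P_zv-(P_zv)(z)\bigr)\phi_z$ is generically of size $h_K\,|v|_{H^1(\omega_z)}$ --- it records the first-order variation of $v$ around the node --- so term-by-term estimation caps the rate at $O(h_n)$ and cannot yield $\|h_n^{-j}(v-\Pi^n_hv)\|\le C_{\Pi,j}\|v\|_j$ for $2\le j\le r+1$. (Take $v$ a nonconstant affine function: each summand is of order $h_K^{1+d/2}|\nabla v|$, yet the sum vanishes; the cancellation between terms is essential.) The standard repair, used in the references the paper cites, is to fix one node $z_0$ of $K$ and write, on $K$, $v-\Pi^n_hv=(v-P_{z_0}v)+\sum_{z\in K}\bigl((P_{z_0}v)(z)-(P_zv)(z)\bigr)\phi_z$, which is legitimate because the nodal interpolant reproduces the polynomial $P_{z_0}v$ exactly. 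The first term is controlled by Bramble--Hilbert on $\omega_{z_0}\supset K$; in the second, your inverse inequality is applied to the \emph{difference of two projections}, namely $|(P_{z_0}v-P_zv)(z)|\le Ch_K^{-d/2}\|P_{z_0}v-P_zv\|_{L^2(K)}\le Ch_K^{-d/2}h_n^{j}\bigl(|v|_{H^j(\omega_{z_0})}+|v|_{H^j(\omega_z)}\bigr)$, which together with $\|\phi_z\|_{L^2(K)}\le Ch_K^{d/2}$ gives the optimal rate. With this modification (propagated also into your trace step), your outline becomes the standard proof; the remaining caveat --- patches are not affine images of a single reference patch, so the Bramble--Hilbert constant requires the shape-regularity argument --- is routine.
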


\section{Fully discrete approximations to TFPDEs}
In order to describe the time discretization corresponding to (\ref{eq2.1})-(\ref{eq2.2a}), we have introduced a time mesh $0=t_0<t_1<\ldots<t_N=T$. For $n=1,2,\ldots,N$, we define
$$\partial v^n:=\frac{v^n-v^{n-1}}{k_n},\qquad \bar\partial v^n:=\frac{v^n-\Pi^n_{n-1} v^{n-1}}{k_n}, $$
where $\Pi^i_j:V^{j}_0\to V^i_0$ denotes suitable projections or interpolants to be chosen.

\subsection{L1 method in time discretization}
The Caputo fractional derivative $\partial^\alpha_t u$
is approximated by the  L1 finite element approximation
\begin{eqnarray}\label{eq2.15}
\bar\partial^\alpha_kU_h^n&=&\sum\limits_{j=1}^{n}\frac{\Pi_j^nU_h^{j}-\Pi_{j-1}^nU_h^{j-1}}{k_{j}}
\int^{t_{j}}_{t_{j-1}}\omega_{1-\alpha}(t_n-s)ds \\
&=&\frac{1}{\Gamma(2-\alpha)}\sum\limits_{j=1}^{n}\Pi_j^n\frac{U_h^{j}-\Pi_{j-1}^jU_h^{j-1}}{k_{j}}
\left[(t_n-t_{j-1})^{1-\alpha}-(t_n-t_{j})^{1-\alpha}\right]\nonumber\\
&=&\sum\limits_{j=1}^{n}a_{j}(t_n)\Pi_j^n\bar{\partial}U_h^j,\nonumber
\end{eqnarray}
where
$$ a_{j}(t):=\frac{1}{\Gamma(2-\alpha)}\left[(t-t_{j-1})^{1-\alpha}-(t-t_{j})^{1-\alpha}\right].$$
\subsection{Linear time reconstructions}
The $L1$  finite element  approximation can be stated as follows: Given $U_h(t)$, find $U^n_h\in V^n_0$ for all $n=1,2,\ldots,N$ such that
\begin{eqnarray}\label{eqBE1}
&&\langle\bar\partial^\alpha_k U^n_h,\varphi_h\rangle +a(U^{n}_h,\varphi_h)
=\langle f^n_h,\varphi_h\rangle ,\qquad \forall \varphi_h\in V^n_0,
\end{eqnarray}
where $f^n_h=P^n_0f(t_n)$ and the continuous time approximation $U_h(t)~(t\in [0,T])$ is defined by linearly interpolating the nodal values $U^{n}_h$ and $U^{n-1}_h$ for $t\in I_n$, $1\le n\le N$, i.e.
\begin{eqnarray}\label{eq2.5}
U_h(t)&:=& \ell_{n,0}(t)U^{n}_h+\ell_{n,-1}(t)U^{n-1}_h=U^{n-1/2}_h+(t-t_{n-1/2})\partial U^{n}_h
\end{eqnarray}
with
\begin{eqnarray*}
\ell_{n,{0}}(t):=\frac{t-t_{n-1}}{k_n},\qquad \ell_{n,{-1}}(t):=\frac{t_{n}-t}{k_n}.
\end{eqnarray*}
The fully discrete scheme ({\ref{eqBE1}) can be written in the following equivalent pointwise form:  Find $U^n_h\in V^n_0$ satisfying
\begin{eqnarray}\label{eqBE}
&&\bar\partial^\alpha_k U^n_h +\mathcal A^n_hU^{n}_h=f^n_h,\quad {\rm for~ all}\quad n=1,2,\ldots,N.
\end{eqnarray}

\subsection{Quadratic time reconstructions}
In \cite{Wang20}, we also introduce a continuous, piecewise reconstruction $\widehat U_h$ of $U_h$ in time defined for all $t\in I_n$, $2\le n\le N$ by
\begin{eqnarray}\label{eq4.6}
\widehat U_h(t)&:=&U_h(t)+\frac{1}{2}(t-t_{n-1})(t-t_{n})\widehat W^{n}_h\\
&=& U^{n-1/2}_h+(t-t_{n-1/2})\partial U^{n}_h+\frac{1}{2}(t-t_{n-1})(t-t_{n})\widehat W^{n}_h,\nonumber
\end{eqnarray}
where
\begin{eqnarray*}
\widehat W^{n}_h:=\bar\partial^2 U^{n}_h:=\frac{2(\bar\partial U^{n}_h-\Pi^n_{n-1}\bar\partial U^{n-1}_h)}{k_{n}+k_{n-1}}\quad {\hbox{for}}~~~ 2\le n\le N,
\end{eqnarray*}
We note that $U_h(t)$ and $\widehat U_h(t)$ coincide at $t_n$, $n=2,\ldots,N$.

\subsection{Elliptic space reconstruction} To derive a posteriori error estimate of optimal order in the $L^\infty(L^2)$-norm or $L^1(L^2)$-norm, we have to appropriately define the elliptic reconstruction operator. And the elliptic reconstruction operator $\mathcal R^n$ has been introduced in \cite{Lakkis06,Makridakis03} for pure parabolic problems.

\begin{definition} [Elliptic reconstruction, \cite{Lakkis06,Makridakis03}] \label{def3.1}
 We define the elliptic reconstruction operator associated with the bilinear form $a$ and the finite element space $V^n_0$ to be the unique operator $\mathcal R^n: V^n_0\to H^1_0$ such that
\begin{equation*}
a(\mathcal R^nv,\varphi)=\langle\mathcal A^n_hv,\varphi\rangle, \qquad \forall \varphi\in H^1_0,
\end{equation*}
for a given $v\in V^n_0$. The function $\mathcal R^nv$ is referred to as the elliptic reconstruction.
\end{definition}
It can easily be seen that the elliptic reconstruction $\mathcal R^n$ satisfies the Galerkin orthogonality property, i.e., $v-\mathcal R^nv$ is orthogonal, with respect to $a$, to $V^n_0$:
\begin{eqnarray*}
a(v-\mathcal R^nv,\varphi_h)=0,\qquad \forall \varphi_h\in V^n_0.
\end{eqnarray*}
This property allows us to obtain the following result whose proof uses standard techniques in a posteriori error estimates for elliptic problems \cite{Brenner08,Verfurth13,Ainsworth00}. Then the error estimates of elliptic space reconstruction  can be bounded in the following lemma, which is an important basis for our computation of the a posteriori error estimate.

\begin{lemma}[Elliptic reconstruction error estimates,  \cite{Bansch12,Lakkis06,Makridakis03}] \label{lem4}
 For any $v\in V^n_0$, the following estimates hold:
\begin{eqnarray*}
\|\mathcal R^nv-v\|&\le& \mathcal E_{e0}(v):=C_{e0}\left(\|h^2_n(\mathcal A_{el}-\mathcal A^n_h)v\|+\|J_A[v]h^{3/2}_n\|_{{\Sigma_n}}\right),\nonumber\\
\|\mathcal R^nv-v\|_1&\le& \mathcal E_{e1}(v):=C_{e1}\left(\|h_n(\mathcal A_{el}-\mathcal A^n_h)v\|+\|J_A[v]h^{1/2}_n\|_{{\Sigma_n}}\right),\nonumber
\end{eqnarray*}
where the constants $C_{e0}$ and $C_{e1}$ depend only on $C_{\Pi,1}$, $C_{\Pi,2}$, the domain $\Omega$, and the problem data.
\end{lemma}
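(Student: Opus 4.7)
The result is a standard residual-based a posteriori estimate for the error $w := \mathcal R^n v - v$ of the elliptic reconstruction. My plan is to identify the residual equation for $w$, exploit Galerkin orthogonality together with the Cl\'ement-type interpolant from Proposition \ref{pro2.2}, and then use a duality (Aubin--Nitsche) argument to pass from the $H^1$-bound to the $L^2$-bound.

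\textbf{Step 1: the residual identity.} Since $v\in V_0^n\subset H^1_0$ and $\mathcal R^n v\in H^1_0$, we have $w\in H^1_0$. For any $\varphi\in H^1_0$, combining the definition of $\mathcal R^n$ with the elementwise representation $a(v,\varphi)=\langle \mathcal A_{el}v,\varphi\rangle+\langle J_A[v],\varphi\rangle_{\Sigma_n}$ gives
\begin{equation*}
a(w,\varphi)=\langle \mathcal A^n_h v,\varphi\rangle-a(v,\varphi)=\langle(\mathcal A^n_h-\mathcal A_{el})v,\varphi\rangle-\langle J_A[v],\varphi\rangle_{\Sigma_n}.
\end{equation*}
Moreover, the definitions of $\mathcal A^n_h$ and $\mathcal R^n$ imply the Galerkin orthogonality $a(w,\varphi_h)=0$ for all $\varphi_h\in V_0^n$, so we may replace $\varphi$ by $\varphi-\Pi^n_h\varphi$ on the right hand side whenever convenient.

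\textbf{Step 2: the $H^1$-bound.} Taking $\varphi=w$, using coercivity $a(w,w)\ge \eta\|w\|_1^2$, and inserting $\varphi-\Pi^n_h w$ in the residual identity, I would split into an interior term and a jump term and apply Cauchy--Schwarz elementwise:
\begin{equation*}
\eta\|w\|_1^2\le \|h_n(\mathcal A_{el}-\mathcal A^n_h)v\|\,\|h_n^{-1}(w-\Pi^n_h w)\|+\|h_n^{1/2}J_A[v]\|_{\Sigma_n}\,\|h_n^{-1/2}(w-\Pi^n_h w)\|_{\Sigma_n}.
\end{equation*}
The two Cl\'ement bounds of Proposition \ref{pro2.2} with $j=1$ control the second factors by $C_{\Pi,1}\|w\|_1$, and dividing through yields the claim for $\mathcal E_{e1}$.

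\textbf{Step 3: the $L^2$-bound by duality.} Let $\psi\in H^1_0\cap H^2$ solve $\mathcal A\psi=w$; since $\Omega$ is a bounded convex polygonal domain and $A$ is smooth, elliptic regularity gives $\|\psi\|_2\le C\|w\|$. Then
\begin{equation*}
\|w\|^2=\langle w,\mathcal A\psi\rangle=a(w,\psi)=a(w,\psi-\Pi^n_h\psi),
\end{equation*}
by Galerkin orthogonality. Plugging $\varphi=\psi-\Pi^n_h\psi$ into the residual identity and invoking Proposition \ref{pro2.2} with $j=2$ to bound $\|h_n^{-2}(\psi-\Pi^n_h\psi)\|$ and $\|h_n^{-3/2}(\psi-\Pi^n_h\psi)\|_{\Sigma_n}$ by $C_{\Pi,2}\|\psi\|_2\le C\|w\|$ produces
\begin{equation*}
\|w\|^2\le C\bigl(\|h_n^{2}(\mathcal A_{el}-\mathcal A^n_h)v\|+\|h_n^{3/2}J_A[v]\|_{\Sigma_n}\bigr)\|w\|,
\end{equation*}
from which the $\mathcal E_{e0}$ bound follows after dividing by $\|w\|$.

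\textbf{Main obstacle.} The argument itself is routine, but care is required at two places: first, verifying that the jump term $\langle J_A[v],\varphi\rangle_{\Sigma_n}$ really appears with the correct sign and that $J_A[v]$ vanishes on $\partial\Omega$, which is fine because $\Pi^n_h$ and $\mathcal R^n v$ live in $H^1_0$ so boundary contributions drop; and second, ensuring that the elliptic regularity estimate $\|\psi\|_2\le C\|w\|$ really applies in the present setting of a convex polygonal $\Omega$ with smooth symmetric positive-definite $A$ (this is where the hypotheses on $\Omega$ and $A$ from Section \ref{2} enter). Both of these are checked once and then the rest is mechanical.
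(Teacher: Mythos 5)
Your proof is correct, and it is precisely the standard argument that the paper itself invokes by citation rather than reproducing: the paper states that the lemma's proof ``uses standard techniques in a posteriori error estimates for elliptic problems'' and refers to \cite{Bansch12,Lakkis06,Makridakis03,Brenner08,Verfurth13,Ainsworth00}, and your combination of the residual identity, Galerkin orthogonality with the Cl\'ement interpolant (Proposition \ref{pro2.2}, $j=1$) for the $H^1$-bound, and the Aubin--Nitsche duality argument with $H^2$-regularity on the convex polygonal domain ($j=2$) for the $L^2$-bound is exactly that technique. The two caveats you flag (the sign/boundary behavior of the jump term and the applicability of elliptic regularity) are handled correctly, so nothing is missing.
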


\section{A posteriori error estimate of  the fully discrete scheme}

\subsection{A posteriori error estimate for the linear space-time reconstructions}

To derive optimal $L^\infty(L^2)$ and $L^1(L^2)$ a posteriori error estimates of the fully discrete scheme for the L1 method, we  introduce the space-time reconstructions of the discrete solutions, which will be required to derive the error estimates.

Combining the linear reconstruction and elliptic reconstruction operators, we define the linear space-time reconstruction $U_{\mathcal R}:[0,T]\to H^1_0$ as
\begin{equation}\label{eq3.3}
U_{\mathcal R}(t):=\ell_{n,-1}(t)\mathcal R^{n-1}U_h^{n-1}+\ell_{n,0}(t)\mathcal R^{n} U_h^n,\quad {\hbox{for}}~t\in I_n.
\end{equation}
Since $U_{\mathcal R}$ is a linear function on $I_n$, we have $U_{\mathcal R}^\prime(t)=\frac{\mathcal R^{n}U_h^n-\mathcal R^{n-1}U_h^{n-1}}{k_n}$. Then for $t\in I_{n}$, one gets the fractional derivative of $U_{\mathcal R}$ as follow
\begin{eqnarray}\label{eq3.4}
\partial^\alpha_tU_{\mathcal R}(t)&=&\int^{t}_{0}\omega_{1-\alpha}(t-s)U_{\mathcal R}^\prime(s)ds\\
&=&\frac{1}{\Gamma(1-\alpha)}\sum\limits_{j=1}^{n-1}\int^{t_{j}}_{t_{j-1}}(t-s)^{-\alpha}U_{\mathcal R}^\prime(s)ds
+\frac{1}{\Gamma(1-\alpha)}\int^{t}_{t_{n-1}}(t-s)^{-\alpha}U_{\mathcal R}^\prime(s)ds\nonumber\\
&=&\sum\limits_{j=1}^{n-1}a_j(t)\frac{\mathcal R^{j}U_h^j-\mathcal R^{j-1}U_h^{j-1}}{k_j}+\frac{(t-t_{n-1})^{1-\alpha}}{\Gamma(2-\alpha)}\frac{\mathcal R^{n}U_h^n-\mathcal R^{n-1}U_h^{n-1}}{k_n}.\nonumber
\end{eqnarray}
And then the residual of $U_{\mathcal R}$ can be defined as
\begin{eqnarray}\label{eq3.5}
E_1^n(t)=\partial^\alpha_t U_{\mathcal R}(t)+\mathcal AU_{\mathcal R}(t)-f(t)\in H_0^1, \qquad t\in I_n.
\end{eqnarray}
Further the error $\rho:=u-U_{\mathcal R}$ satisfies the error equation
\begin{eqnarray}\label{eq3.6}
\partial^\alpha_t \rho(t)+\mathcal A\rho(t)=-E_1^n(t).
\end{eqnarray}

Hence, based on the error equation (\ref{eq3.6}) and Lemma \ref{lem1}, the $L^1(0,T;L^2(\Omega))$ norm  error estimates  for the linear space-time reconstruction are stated in the following theorem.
\begin{theorem}[$L^1(0,T;L^2(\Omega))$ norm  error estimates] \label{thm1}
 Let $C_{\alpha,T}=\frac{\Gamma(1-\alpha)T^{\alpha+1}}{2^\alpha}$, $U_{\mathcal R}(t)$ be the L1 approximation to the solution of the problem (\ref{eq1.1})-(\ref{eq1.3}) and the error $\rho(t):=u(t)-U_{\mathcal R}(t)$. Then the following a posteriori error estimate is valid, that is,
\begin{equation*}
\begin{aligned}
\int_0^T||\rho(t)||dt+\sqrt{\frac{8C_{\alpha,T}}{T}}&\int_0^T\|\rho(t)\|_1dt \leq 8\sqrt{\frac{T^{1-\alpha}C_{\alpha,T}}{\Gamma(2-\alpha)}}||\rho(0)||+8\sqrt{2}C_1C_{\alpha,T}\int_{0}^{T}\|\bar\partial^\alpha_k U^n_h-\partial^\alpha_t U_{\mathcal R}\|dt\\
&\quad+4\beta\sqrt{C_{\alpha,T}}\left(\int_0^T\|\mathcal R^n U_h^n-U_{\mathcal R}\|^2_1dt\right)^{1/2}+8\sqrt{T}C_{\alpha,T}\left(\int_{0}^{T}\|f-f_h^n\|^2dt\right)^{1/2}.
\end{aligned}
\end{equation*}
\end{theorem}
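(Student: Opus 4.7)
The plan is to perform an energy estimate on the error equation \eqref{eq3.6}, exploit the weighted coercivity from Lemma \ref{lem1} together with the coercivity of $a(\cdot,\cdot)$, and then convert the resulting weighted $L^2(L^2)\cap L^2(H^1)$ control of $\rho$ into the $L^1(L^2)+L^1(H^1)$ control stated in the theorem.

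First, pair \eqref{eq3.6} with $\rho(t)\in L^2(\Omega)$ and integrate over $(0,T)$. Lemma \ref{lem1} then provides a weighted lower bound of $\int_0^T\langle\partial^\alpha_t\rho,\rho\rangle\,dt$ involving the kernel $(T-t)^{-\alpha}+t^{-\alpha}$ and a cross-term with $\rho(0)$, while coercivity gives $\int_0^T a(\rho,\rho)\,dt\ge\eta\int_0^T\|\rho\|_1^2\,dt$. The uniform lower bound $(T-t)^{-\alpha}+t^{-\alpha}\ge 2^{1+\alpha}T^{-\alpha}$ (attained at $t=T/2$) then exactly matches the prefactor in $C_{\alpha,T}=\Gamma(1-\alpha)T^{\alpha+1}/2^\alpha$, producing an unweighted $L^2(L^2)$ estimate of $\rho$. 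The $\rho(0)$ cross-term is handled by Cauchy-Schwarz in the weighted inner product and Young's inequality, absorbing half of the $t^{-\alpha}$-weighted contribution on the left and leaving a clean bound proportional to $\|\rho(0)\|^2\int_0^T t^{-\alpha}\,dt=\|\rho(0)\|^2 T^{1-\alpha}/(1-\alpha)$, which accounts for the factor $T^{1-\alpha}/\Gamma(2-\alpha)$ under the first square root on the right-hand side.

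The right-hand side of the energy inequality requires a residual decomposition. Using the fully discrete scheme \eqref{eqBE} together with the identification $\langle\mathcal A\,\mathcal R^n v,\varphi\rangle=a(\mathcal R^n v,\varphi)=\langle\mathcal A_h^n v,\varphi\rangle$ valid for all $\varphi\in H^1_0$, one can rewrite $E_1^n$ from \eqref{eq3.5} as
\begin{equation*}
E_1^n=\bigl(\partial^\alpha_t U_{\mathcal R}-\bar\partial^\alpha_k U^n_h\bigr)+\mathcal A\bigl(U_{\mathcal R}-\mathcal R^n U^n_h\bigr)+\bigl(f^n_h-f\bigr),
\end{equation*}
so that $-\langle E_1^n,\rho\rangle$ splits into the three computable indicators appearing on the right-hand side. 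The spatial term is controlled using continuity, $a(\mathcal R^n U^n_h-U_{\mathcal R},\rho)\le\beta\|\mathcal R^n U^n_h-U_{\mathcal R}\|_1\|\rho\|_1$, and absorbed via Young's inequality into $\eta\int_0^T\|\rho\|_1^2\,dt$ with a weight chosen to produce the factor $4\beta\sqrt{C_{\alpha,T}}$. The data term $\langle f-f^n_h,\rho\rangle$ is paired with the $L^2(L^2)$ norm of $\rho$ by Cauchy-Schwarz and absorbed into the weighted coercivity, contributing the factor $8\sqrt{T}\,C_{\alpha,T}$. For the temporal consistency term, a Poincar\'e-type inequality $\|\rho\|\le C_1\|\rho\|_1$ (the constant $C_1$ in the statement) is used so that $\langle\bar\partial^\alpha_k U^n_h-\partial^\alpha_t U_{\mathcal R},\rho\rangle\le C_1\|\bar\partial^\alpha_k U^n_h-\partial^\alpha_t U_{\mathcal R}\|\,\|\rho\|_1$, and H\"older's inequality in time is applied with an $L^\infty$-in-time factor of $\|\rho\|_1$ that is ultimately controlled by the already-absorbed coercivity contribution, producing the $L^1(L^2)$ norm of the temporal residual on the right.

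Once the energy estimate has been brought into the form $\frac{T}{C_{\alpha,T}}\int_0^T\|\rho\|^2\,dt+\eta\int_0^T\|\rho\|_1^2\,dt\lesssim(\text{RHS})^2$, a final application of Cauchy-Schwarz in time $\int_0^T\|\rho\|\,dt\le\sqrt{T}\bigl(\int_0^T\|\rho\|^2\,dt\bigr)^{1/2}$ (and analogously for $\|\rho\|_1$), combined with the elementary inequality $\sqrt{x}+\sqrt{y}\le\sqrt{2(x+y)}$, yields the $L^1$-form of the left-hand side and isolates the prefactor $\sqrt{8C_{\alpha,T}/T}$ on the $H^1$ piece. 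The hardest part will be reconciling the $L^1$-in-time norm of the temporal residual on the right with the $L^2$-in-time structure naturally yielded by the energy method: the mediating role of the Poincar\'e constant $C_1$, combined with a careful choice of Young inequality weights in each absorption step, seems to be the key mechanism for producing the exact multiplicative constants $8\sqrt{T^{1-\alpha}C_{\alpha,T}/\Gamma(2-\alpha)}$, $8\sqrt{2}C_1 C_{\alpha,T}$, $4\beta\sqrt{C_{\alpha,T}}$, and $8\sqrt{T}\,C_{\alpha,T}$ displayed in the theorem.
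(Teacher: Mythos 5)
Your overall strategy---energy estimate on (\ref{eq3.6}), Lemma \ref{lem1} plus coercivity of $a(\cdot,\cdot)$ for the left-hand side, residual decomposition through the scheme (\ref{eqBE}) and the identity $\langle \mathcal A \mathcal R^n U_h^n,\varphi\rangle = a(\mathcal R^n U_h^n,\varphi)=\langle\mathcal A_h^n U_h^n,\varphi\rangle$, followed by Young-type absorptions and the final conversion $\bigl(\int_0^T\|\rho\|\,dt\bigr)^2\le T\int_0^T\|\rho\|^2\,dt$---is exactly the paper's, and your treatment of the initial-data term, the spatial term $\beta\|\mathcal R^nU_h^n-U_{\mathcal R}\|_1\|\rho\|_1$, and the data term $\|f-f_h^n\|\,\|\rho\|$ all match the paper's proof of Theorem \ref{thm1}. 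The genuine gap is in the one step you yourself flagged as hardest: the temporal consistency term. After the pointwise bound $\langle\bar\partial^\alpha_k U^n_h-\partial^\alpha_t U_{\mathcal R},\rho\rangle\le C_1\|\bar\partial^\alpha_k U^n_h-\partial^\alpha_t U_{\mathcal R}\|\,\|\rho\|_1$ you apply H\"older in time, producing the factor $\esssup_{t}\|\rho(t)\|_1$, and you assert that this factor is ``controlled by the already-absorbed coercivity contribution.'' It is not: coercivity yields control of $\int_0^T\|\rho(t)\|_1^2\,dt$ only, and no inequality of the form $\esssup_t\|\rho(t)\|_1\lesssim\bigl(\int_0^T\|\rho(t)\|_1^2\,dt\bigr)^{1/2}$ holds, since $L^2(0,T)$ does not dominate $L^\infty(0,T)$. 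The absorption you need at this point is therefore unavailable and the argument does not close.

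The paper avoids pairing this residual with $\|\rho\|_1$ altogether. It keeps $\int_0^T\|\bar\partial^\alpha_k U^n_h-\partial^\alpha_t U_{\mathcal R}\|\,\|\rho\|\,dt$ with the $L^2(\Omega)$ norm of $\rho$, bounds it by $C_1\int_0^T\|\bar\partial^\alpha_k U^n_h-\partial^\alpha_t U_{\mathcal R}\|\,dt\cdot\int_0^T\|\rho\|\,dt$, and then applies Young's inequality so that the quantity requiring absorption is $\bigl(\int_0^T\|\rho\|\,dt\bigr)^2$. Unlike your $\esssup_t\|\rho\|_1$, this quantity \emph{is} dominated by the left-hand side, because $\bigl(\int_0^T\|\rho\|\,dt\bigr)^2\le T\int_0^T\|\rho\|^2\,dt$ ties it to the weighted $L^2(L^2)$ term produced by Lemma \ref{lem1}; this structural choice is precisely what reconciles the $L^1$-in-time residual norm with the $L^2$-in-time energy framework. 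Note also that the paper's $C_1$ is the constant in this $L^\infty$-versus-$L^1$-in-time pairing of $\rho$ (admittedly itself a delicate, essentially undefined point in the paper), not the trivial Poincar\'e-type constant in $\|\rho\|\le C_1\|\rho\|_1$ that you propose; to repair your argument you should replace your H\"older step by the paper's pairing, whose Young remainder lands on an absorbable term.
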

\begin{proof}
Taking the inner product of the equation (\ref{eq3.6}) with $\rho$ and integrating it in the time direction, we can obtain
\begin{eqnarray}\label{eq3.8}
\int_0^T(\partial^\alpha_t \rho,\rho)dt+\int_0^T(\mathcal A\rho,\rho)dt=\int_{0}^{T}(-E_1^n,\rho)dt.
\end{eqnarray}
By means of Lemma \ref{lem1} and (\ref{eq3.8}), we have
\begin{equation*}
\begin{aligned}
\frac{1}{2\Gamma(1-\alpha)}\int_0^T((T-t)^{-\alpha}&+t^{-\alpha})||\rho(t)||^2dt+\int_0^T||\rho(t)||_1^2dt \leq
\int_{0}^{T}(-E_1^n,\rho)dt+\frac{1}{1-\alpha}\int_0^Tt^{-\alpha}(\rho(0),\rho(t))dt\\
&\leq \int_{0}^{T}(-E_1^n,\rho)dt+ \frac{1}{\Gamma(1-\alpha)}\int_0^T((T-t)^{-\alpha}+t^{-\alpha})||\rho(0)||\cdot||\rho(t)||dt\\
&\leq \int_{0}^{T}(-E_1^n,\rho)dt+ \frac{1}{\Gamma(1-\alpha)}\int_0^T((T-t)^{-\alpha}+t^{-\alpha})(||\rho(0)||^2+\frac{1}{4}||\rho(t)||^2)dt.
\end{aligned}
\end{equation*}
Since the bilinear forms is continuous, by rearranging the above equation and using the fact $(T-t)^{-\alpha}+t^{-\alpha}\geq (\frac{T}{2})^{-\alpha}$, we obtain
\begin{equation}\label{eq3.9}
\begin{aligned}
\frac{2^\alpha}{4\Gamma(1-\alpha)T^\alpha}&\int_0^T||\rho(t)||^2dt+\int_0^T||\rho(t)||_1^2dt \leq
\int_{0}^{T}(-E_1^n,\rho)dt +\frac{1}{\Gamma(1-\alpha)}\int_0^T((T-t)^{-\alpha}+t^{-\alpha})dt||\rho(0)||^2\\
&\leq \int_{0}^{T} (-\partial^\alpha_t U_{\mathcal R}-\mathcal AU_{\mathcal R}+f+\bar\partial^\alpha_k U^n_h +\mathcal A^n_hU^{n}_h-f^n_h,\rho)dt+\frac{2T^{1-\alpha}}{\Gamma(2-\alpha)}||\rho(0)||^2\\
&\leq \int_{0}^{T}\|\bar\partial^\alpha_k U^n_h-\partial^\alpha_t U_{\mathcal R}\|\cdot\|\rho\|dt+\beta\int_0^T\|{\mathcal R}^n U_h^n-U_{\mathcal R}\|_1\cdot\|\rho(t)\|_1dt\\
&\quad +\int_0^T\|f-f_h^n\|\cdot\|\rho\|dt+\frac{2T^{1-\alpha}}{\Gamma(2-\alpha)}||\rho(0)||^2\\
&\leq C_1\int_{0}^{T}\|\bar\partial^\alpha_k U^n_h-\partial^\alpha_t U_{\mathcal R}\|dt\cdot\int_0^T\|\rho\|dt+\frac{\beta^2}{2}\int_0^T\|{\mathcal R}^nU_h^n-U_{\mathcal R}\|^2_1dt+\frac{1}{2}\int_0^T\|\rho(t)\|^2_1dt\\
&\quad+\frac{\Gamma(1-\alpha)T^\alpha}{2^{\alpha-1}}\int_{0}^{T}\|f-f_h^n\|^2dt+\frac{2^\alpha}{8\Gamma(1-\alpha)T^\alpha}\int_0^T\|\rho(t)\|^2dt
+\frac{2T^{1-\alpha}}{\Gamma(2-\alpha)}||\rho(0)||^2.
\end{aligned}
\end{equation}
Sorting out the similar terms from (\ref{eq3.9}), we arrive at
\begin{equation*}
\begin{aligned}
\frac{2^\alpha}{8\Gamma(1-\alpha)T^\alpha}\int_0^T&||\rho(t)||^2dt +\frac{1}{2}\int_0^T\|\rho(t)\|^2_1dt \leq C_1\int_{0}^{T}\|\bar\partial^\alpha_k U^n_h-\partial^\alpha_t U_{\mathcal R}\|dt\cdot\int_0^T\|\rho\|dt\\
&+\frac{\beta^2}{2}\int_0^T\|{\mathcal R}^nU_h^n-U_{\mathcal R}\|^2_1dt+\frac{\Gamma(1-\alpha)T^\alpha}{2^{\alpha-1}}\int_{0}^{T}\|f-f_h^n\|^2dt+\frac{2T^{1-\alpha}}{\Gamma(2-\alpha)}||\rho(0)||^2.
\end{aligned}
\end{equation*}
Further, recalling the fact $(\int_0^T\|\rho(t)\|dt)^2 \leq T\int_0^T\|\rho(t)\|^2dt$ and using the Young's inequality to the above inequality, we have
\begin{equation*}
\begin{aligned}
\frac{1}{8C_{\alpha,T}}\left(\int_0^T||\rho(t)||dt\right)^2 &+\frac{1}{2T}\left(\int_0^T\|\rho(t)\|_1dt\right)^2 \leq 4C_1^2C_{\alpha,T}\left(\int_{0}^{T}\|\bar\partial^\alpha_k U^n_h-\partial^\alpha_t U_{\mathcal R}\|dt\right)^2\\
&\quad+\frac{1}{16C_{\alpha,T}}\left(\int_0^T||\rho(t)||dt\right)^2+\frac{\beta^2}{2}\int_0^T\|\mathcal R^n U_h^n-U_{\mathcal R}\|^2_1dt\\
&\quad+2TC_{\alpha,T}\int_{0}^{T}\|f-f_h^n\|^2dt+\frac{2T^{1-\alpha}}{\Gamma(2-\alpha)}||\rho(0)||^2.
\end{aligned}
\end{equation*}
Then using the properties of the sum of squares yields
\begin{equation*}
\begin{aligned}
\int_0^T||\rho(t)||dt+\sqrt{\frac{8C_{\alpha,T}}{T}}&\int_0^T\|\rho(t)\|_1dt \leq 8\sqrt{\frac{T^{1-\alpha}C_{\alpha,T}}{\Gamma(2-\alpha)}}||\rho(0)||+8\sqrt{2}C_1C_{\alpha,T}\int_{0}^{T}\|\bar\partial^\alpha_k U^n_h-\partial^\alpha_t U_{\mathcal R}\|dt\\
&\quad+4\beta\sqrt{C_{\alpha,T}}\left(\int_0^T\|\mathcal R^n U_h^n-U_{\mathcal R}\|^2_1dt\right)^{1/2}+8\sqrt{T}C_{\alpha,T}\left(\int_{0}^{T}\|f-f_h^n\|^2dt\right)^{1/2}.
\end{aligned}
\end{equation*}
Therefore, we give the proof of Theorem \ref{thm1}.
\end{proof}

\begin{remark}
 For the sake of formal intuition and conciseness, we give the expressions of a posteriori error estimate in Theorem \ref{thm1}. But since the elliptic reconstruction operators defined in Definition \ref{def3.1} is a continuous abstract operator, the terms $\int_{0}^{T}\|\bar\partial^\alpha_k U^n_h-\partial^\alpha_t U_{\mathcal R}\|dt$ and $\int_0^T\|U_h^n-U_{\mathcal R}\|^2_1dt$ cannot be computed directly. Therefore, with the help of Lemma \ref{lem4}, we will give their computable expressions in Appendices 1 and 2 respectively.
\end{remark}

For the linear space-time reconstruction, from the proof of Theorem \ref{thm1}, it can be seen that a posteriori error estimate in the $L^1(0,T;L^2(\Omega))$-norm contains the results of a posteriori error estimate in the $L^2(0,T;L^2(\Omega))$-norm, which will be given in this corollary below.
\begin{corollary}[$L^2(0,T;L^2(\Omega))$ norm error estimate]  \label{cor1}
 Let $U_{\mathcal R}(t)$ be the L1 approximation to the solution of the problem (\ref{eq1.1})-(\ref{eq1.3}), and the error $\rho(t):=u(t)-U_{\mathcal R}(t)$. Then the following a posteriori error estimate is valid, that is,
\begin{equation*}
\begin{aligned}
\int_0^T||\rho(t)||^2dt&\leq \frac{2^{4-\alpha}\Gamma(1-\alpha)T}{\Gamma(2-\alpha)}||\rho(0)||^2+ 2^{5-2\alpha}\Gamma^2(1-\alpha)C_1^2T^{2\alpha+1}\left(\int_{0}^{T}\|\bar\partial^\alpha_k U^n_h-\partial^\alpha_t U_{\mathcal R}\|dt\right)^2\\
&+2^{1-\alpha}\Gamma(1-\alpha)\beta^2T^\alpha\int_0^T\|{\mathcal R}^nU_h^n-U_{\mathcal R}\|^2_1dt+2^{5-2\alpha}\Gamma^2(1-\alpha)T^{2\alpha}\int_{0}^{T}\|f-f_h^n\|^2dt.
\end{aligned}
\end{equation*}
\end{corollary}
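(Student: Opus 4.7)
The plan is to leverage the intermediate inequality that appears inside the proof of Theorem~\ref{thm1}, namely the estimate obtained right after all the Young's-inequality splittings and the absorption coming from the last forcing-term bound, which reads
\begin{equation*}
\frac{2^\alpha}{8\Gamma(1-\alpha)T^\alpha}\int_0^T\!\|\rho(t)\|^2\,dt + \frac{1}{2}\int_0^T\!\|\rho(t)\|_1^2\,dt \le C_1\!\int_0^T\!\|\bar\partial^\alpha_k U_h^n-\partial^\alpha_t U_{\mathcal R}\|\,dt\cdot\!\int_0^T\!\|\rho\|\,dt + \frac{\beta^2}{2}\!\int_0^T\!\|\mathcal R^n U_h^n-U_{\mathcal R}\|_1^2\,dt+\frac{\Gamma(1-\alpha)T^\alpha}{2^{\alpha-1}}\!\int_0^T\!\|f-f_h^n\|^2\,dt+\frac{2T^{1-\alpha}}{\Gamma(2-\alpha)}\|\rho(0)\|^2.
\end{equation*}
This is precisely the inequality from which Theorem~\ref{thm1} was derived by passing to the $L^1$-in-time side via $(\int_0^T\|\rho\|\,dt)^2\le T\int_0^T\|\rho\|^2\,dt$; here instead I will stay on the $L^2(L^2)$ side.

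The key steps, in order, are the following. First, discard the non-negative term $\tfrac12\int_0^T\|\rho\|_1^2\,dt$ on the left, since the corollary only quantifies the $L^2(L^2)$-part of the error. Second, handle the only RHS term that still contains $\rho$, namely $C_1 A\int_0^T\|\rho\|\,dt$ where $A:=\int_0^T\|\bar\partial^\alpha_k U_h^n-\partial^\alpha_t U_{\mathcal R}\|\,dt$, by Cauchy--Schwarz in time:
\begin{equation*}
\int_0^T\|\rho(t)\|\,dt\;\le\;\sqrt{T}\,\Bigl(\int_0^T\|\rho(t)\|^2\,dt\Bigr)^{1/2}.
\end{equation*}
Third, apply Young's inequality of the form $ab\le \tfrac{a^2}{4\varepsilon}+\varepsilon b^2$ with $a=C_1 A\sqrt{T}$, $b=\bigl(\int_0^T\|\rho\|^2\bigr)^{1/2}$, and choose $\varepsilon$ so that the resulting $\varepsilon\int_0^T\|\rho\|^2\,dt$ term is strictly smaller than the coefficient $\tfrac{2^\alpha}{8\Gamma(1-\alpha)T^\alpha}$ on the left. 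Fourth, absorb that piece into the left-hand side and multiply the resulting inequality through by $\tfrac{16\Gamma(1-\alpha)T^\alpha}{2^\alpha}=2^{4-\alpha}\Gamma(1-\alpha)T^\alpha$ (the reciprocal of the surviving LHS coefficient). Tracking the powers of two through each term yields the constants $2^{4-\alpha}\Gamma(1-\alpha)T/\Gamma(2-\alpha)$ on $\|\rho(0)\|^2$, $2^{5-2\alpha}\Gamma^2(1-\alpha)C_1^2 T^{2\alpha+1}$ on $A^2$, $2^{1-\alpha}\Gamma(1-\alpha)\beta^2 T^\alpha$ on the elliptic-reconstruction indicator, and $2^{5-2\alpha}\Gamma^2(1-\alpha)T^{2\alpha}$ on the forcing indicator, matching the statement.

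The only subtle point, and thus the main obstacle, is the bookkeeping of the Young and absorption constants so that every numerical factor lines up with the target. A loose choice of $\varepsilon$ (for instance $\varepsilon=\tfrac{2^\alpha}{16\Gamma(1-\alpha)T^\alpha}$, which absorbs exactly one half of the LHS) already yields the correct \emph{orders} in $T$ and $\Gamma(1-\alpha)$ but produces slightly larger powers of $2$ than those in the statement; matching the stated constants requires distributing the absorption a little more carefully among the three Young-type splittings that lead to the intermediate inequality (equivalently, retuning the Young parameter used on the $(\rho(0),\rho)$ term inside the proof of Theorem~\ref{thm1} before re-deriving the intermediate bound). Once those parameters are fixed, the argument above is completely routine and no additional analytical input beyond Lemma~\ref{lem1}, the continuity of $a(\cdot,\cdot)$, and Cauchy--Schwarz is needed.
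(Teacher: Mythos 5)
Your overall route is the same as the paper's: the paper's proof of Corollary \ref{cor1} also stays on the $L^2(L^2)$ side, uses $(\int_0^T\|\rho\|dt)^2\le T\int_0^T\|\rho\|^2dt$, Young's inequality, absorption into the left-hand side, and a final normalization. The genuine problem is your starting point: what you treat as constant bookkeeping is in fact an obstruction. Set $A:=\int_0^T\|\bar\partial^\alpha_k U^n_h-\partial^\alpha_t U_{\mathcal R}\|dt$ and $c_X:=\frac{2^\alpha}{8\Gamma(1-\alpha)T^\alpha}$, the coefficient of $\int_0^T\|\rho\|^2dt$ in the sorted intermediate inequality you quote. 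After your Cauchy--Schwarz and Young step with parameter $\varepsilon\in(0,c_X)$ and division by $c_X-\varepsilon$, the elliptic-reconstruction term carries the coefficient $\frac{\beta^2}{2(c_X-\varepsilon)}$; matching the stated coefficient $2^{1-\alpha}\Gamma(1-\alpha)\beta^2T^\alpha$ would require $c_X-\varepsilon=\frac{2^\alpha}{4\Gamma(1-\alpha)T^\alpha}=2c_X$, i.e.\ $\varepsilon=-c_X<0$. So no admissible $\varepsilon$ works: the factor $\beta^2/2$ and the already-halved coefficient $c_X$ are locked into the sorted inequality, and even the limit $\varepsilon\to0^+$ leaves you a factor $2$ short on that term (while blowing up the $A^2$ coefficient). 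The re-derivation you mention at the end is therefore not optional fine-tuning but the actual proof.

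Moreover, the specific knob you point to is the wrong one. The paper restarts from an earlier stage of (\ref{eq3.9}), where the left-hand side still has the full coefficients $\frac{2^\alpha}{4\Gamma(1-\alpha)T^\alpha}$ and $1$, and the right-hand side still contains the unsplit products $\beta\int_0^T\|\mathcal R^nU_h^n-U_{\mathcal R}\|_1\|\rho\|_1dt$ and $\int_0^T\|f-f_h^n\|\,\|\rho\|dt$; it then applies Young with three free parameters, chosen as $\eta_1=\frac{2^\alpha}{16\Gamma(1-\alpha)T^{\alpha+1}}$, $\eta_2=1$, $\eta_3=\frac{2^\alpha}{16\Gamma(1-\alpha)T^{\alpha}}$. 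The decisive choice is $\eta_2=1$: since the corollary discards the $H^1$ part of the error anyway, one may spend \emph{all} of $\int_0^T\|\rho\|_1^2dt$ on absorption, which turns $\beta^2/2$ into $\beta^2/4$; the absorption budget for $\int_0^T\|\rho\|^2dt$ is then split evenly between the $A$-term (via $\eta_1T$) and the $f$-term (via $\eta_3$). By contrast, the Young parameter on the $(\rho(0),\rho(t))$ term, which your parenthetical singles out, is not retuned at all: it stays at $1/4$ in both proofs, as you can confirm from the coefficient $\frac{2^{4-\alpha}\Gamma(1-\alpha)T}{\Gamma(2-\alpha)}=\frac{2T^{1-\alpha}}{\Gamma(2-\alpha)}\cdot\frac{1}{c_X}$ on $\|\rho(0)\|^2$ in the statement. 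With the starting point moved back one step and the parameters chosen as above, your argument becomes exactly the paper's proof.
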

\begin{proof}
From (\ref{eq3.9}), it can be deduced that
\begin{equation*}%\label{eq4.11}
\begin{aligned}
\frac{2^\alpha}{4\Gamma(1-\alpha)T^\alpha}&\int_0^T||\rho(t)||^2dt+\int_0^T||\rho(t)||_1^2dt
\leq C_1\int_{0}^{T}\|\bar\partial^\alpha_k U^n_h-\partial^\alpha_t U_{\mathcal R}\|dt\cdot\int_0^T\|\rho\|dt\\
&\quad+\beta\int_0^T\|{\mathcal R}^n U_h^n-U_{\mathcal R}\|_1\cdot\|\rho(t)\|_1dt+\int_0^T\|f-f_h^n\|\cdot\|\rho\|dt+\frac{2T^{1-\alpha}}{\Gamma(2-\alpha)}||\rho(0)||^2.
\end{aligned}
\end{equation*}
Recalling the fact $(\int_0^T\|\rho(t)\|dt)^2 \leq T\int_0^T\|\rho(t)\|^2dt$ and using the Young's inequality to the above inequality, we have
\begin{equation*}
\begin{aligned}
\frac{2^\alpha}{4\Gamma(1-\alpha)T^\alpha}&\int_0^T||\rho(t)||^2dt+\int_0^T||\rho(t)||_1^2dt \leq \frac{C_1^2}{4\eta_1}\left(\int_{0}^{T}\|\bar\partial^\alpha_k U^n_h-\partial^\alpha_t U_{\mathcal R}\|dt\right)^2\\
&+\eta_1 T \int_0^T\|\rho\|^2dt+\frac{\beta ^2}{4\eta_2}\int_0^T\|{\mathcal R}^nU_h^n-U_{\mathcal R}\|^2_1dt+\eta_2\int_0^T\|\rho(t)\|^2_1dt\\
&+\frac{1}{4\eta_3}\int_{0}^{T}\|f-f_h^n\|^2dt+\eta_3\int_0^T||\rho(t)||^2dt+\frac{2T^{1-\alpha}}{\Gamma(2-\alpha)}||\rho(0)||^2.
\end{aligned}
\end{equation*}
Further, taking $\eta_1=\frac{2^\alpha}{16\Gamma(1-\alpha)T^{\alpha+1}},~\eta_2=1,~\eta_3=\frac{2^\alpha}{16\Gamma(1-\alpha)T^{\alpha}}$ and merging the same terms yields
\begin{equation*}
\begin{aligned}
\frac{2^\alpha}{8\Gamma(1-\alpha)T^\alpha}&\int_0^T||\rho(t)||^2dt\leq \frac{2T^{1-\alpha}}{\Gamma(2-\alpha)}||\rho(0)||^2+ 2^{2-\alpha}\Gamma(1-\alpha)C_1^2T^{\alpha+1}\left(\int_{0}^{T}\|\bar\partial^\alpha_k U^n_h-\partial^\alpha_t U_{\mathcal R}\|dt\right)^2\\
&+\frac{\beta ^2}{4}\int_0^T\|{\mathcal R}^nU_h^n-U_{\mathcal R}\|^2_1dt+2^{2-\alpha}\Gamma(1-\alpha)T^{\alpha}\int_{0}^{T}\|f-f_h^n\|^2dt.
\end{aligned}
\end{equation*}
Therefore, by normalizing the coefficients of the above inequality, we could complete the proof of Corollary \ref{cor1}.
\end{proof}

Meanwhile, applying the Lemma \ref{lem2} into the error equation (\ref{eq3.6}), we obtain the following $L^\infty(0,T;L^2(\Omega))$ norm  error estimates  for the linear space-time reconstruction.
\begin{theorem}[$L^\infty(0,T;L^2(\Omega))$ norm  error estimates] \label{thm2}
 Let $U_{\mathcal R}(t)$ be the L1 approximation to the solution of the problem (\ref{eq1.1})-(\ref{eq1.3}), and the error $\rho(t):=u(t)-U_{\mathcal R}(t)$. Then the following a posteriori error estimate is valid, that is, for $t\in I_n$, $n\ge 1$,
\begin{equation*}%\label{eq3.7}
\begin{aligned}
||\rho(t)||\leq ||\rho(0)||+\frac{1}{\sin\theta}C_{\alpha,\varphi}\int_0^t(t-\tau)^{\alpha-1}(||\partial^\alpha_t U_{\mathcal R}-\bar\partial^\alpha_kU_h^n||+||\mathcal AU_{\mathcal R}-\mathcal A _h^nU_h^n||+||f_h^n-f||+||\mathcal A\rho(0)||)d\tau.
\end{aligned}
\end{equation*}
\end{theorem}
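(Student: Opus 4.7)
The proof plan is to reduce this to a direct application of Lemma \ref{lem2} to the error equation, followed by a careful decomposition of the residual using the fully discrete scheme.

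First I would recall that the error $\rho = u - U_{\mathcal R}$ satisfies the error equation (\ref{eq3.6}), namely $\partial^\alpha_t \rho(t) + \mathcal{A}\rho(t) = -E_1^n(t)$ for $t \in I_n$, with initial value $\rho(0) = u_0 - U_{\mathcal R}(0)$. Viewing this as a TFPDE of the form (\ref{eq1.1})–(\ref{eq1.3}) whose data are $\rho(0)$ as initial datum and $-E_1^n$ as forcing term, I would invoke Lemma \ref{lem2} (assuming $\rho(0) \in D(\mathcal A)$, inherited from the regularity assumption on $u_0$ in Lemma \ref{lem2}) to obtain
\begin{equation*}
\|\rho(t)\| \le \|\rho(0)\| + \frac{1}{\sin\theta}C_{\alpha,\varphi}\int_0^t (t-\tau)^{\alpha-1}\bigl\|-E_1^n(\tau) - \mathcal A\rho(0)\bigr\|\,d\tau.
\end{equation*}

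Next I would decompose $E_1^n(\tau)$ by adding and subtracting the discrete quantities associated with the fully discrete scheme (\ref{eqBE}). Recalling that $\bar\partial^\alpha_k U_h^n + \mathcal{A}_h^n U_h^n - f_h^n = 0$ by (\ref{eqBE}), I can write
\begin{equation*}
E_1^n(\tau) = \bigl(\partial^\alpha_t U_{\mathcal R}(\tau) - \bar\partial^\alpha_k U_h^n\bigr) + \bigl(\mathcal A U_{\mathcal R}(\tau) - \mathcal A_h^n U_h^n\bigr) + \bigl(f_h^n - f(\tau)\bigr),
\end{equation*}
so that the triangle inequality, together with the extra triangle step separating $\mathcal A \rho(0)$, immediately yields the asserted bound.

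The main conceptual obstacle is simply recognizing that Lemma \ref{lem2}, which is stated for the abstract TFPDE, applies verbatim to the error equation because $\rho$ solves a problem of exactly that form; once this observation is made, the argument is purely algebraic. A minor bookkeeping point that must be handled carefully is that within each interval $I_n$ the discrete quantities $\bar\partial^\alpha_k U_h^n$, $\mathcal A_h^n U_h^n$ and $f_h^n$ are piecewise constant in $\tau$ (taking their values at $t_n$), so the integrand in the final estimate is the piecewise function whose $n$-th piece uses the data associated with $I_n$; this is what makes the cancellation $\bar\partial^\alpha_k U_h^n + \mathcal A_h^n U_h^n - f_h^n = 0$ legitimate inside the convolution integral.
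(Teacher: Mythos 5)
Your proposal is correct and follows essentially the same route as the paper: apply Lemma \ref{lem2} to the error equation (\ref{eq3.6}) with $\rho(0)$ as initial datum and $-E_1^n$ as forcing, insert the zero quantity $\bar\partial^\alpha_k U_h^n + \mathcal A_h^n U_h^n - f_h^n$ from the pointwise scheme (\ref{eqBE}) to decompose the residual, and finish with the triangle inequality. Your explicit remarks on the hypothesis $\rho(0)\in D(\mathcal A)$ and on the piecewise-in-$\tau$ interpretation of the discrete quantities inside the convolution integral are points the paper leaves implicit, but they do not change the argument.
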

\begin{proof}
Applying Lemma \ref{lem2} to the error equation (\ref{eq3.6}) yields
\begin{equation*}
\begin{aligned}
||\rho(t)||&\leq ||\rho(0)||+\frac{1}{\sin\theta}C_{\alpha,\varphi}\int_0^t(t-\tau)^{\alpha-1}||E_1^n+A\rho(0)||d\tau\\
&\leq ||\rho(0)||+\frac{1}{\sin\theta}C_{\alpha,\varphi}\int_0^t(t-\tau)^{\alpha-1}||\partial^\alpha_t U_{\mathcal R}(t)+\mathcal AU_{\mathcal R}(t)-f(t)+\mathcal A\rho(0)||d\tau\\
&\leq ||\rho(0)||+\frac{1}{\sin\theta}C_{\alpha,\varphi}\int_0^t(t-\tau)^{\alpha-1}||(\partial^\alpha_t U_{\mathcal R}-\bar\partial^\alpha_kU_h^n) +(\mathcal AU_{\mathcal R}-\mathcal A _h^nU_h^n)+(f_h^n-f)+\mathcal A\rho(0)||d\tau\\
&\leq ||\rho(0)||+\frac{1}{\sin\theta}C_{\alpha,\varphi}\int_0^t(t-\tau)^{\alpha-1}(||\partial^\alpha_t U_{\mathcal R}-\bar\partial^\alpha_kU_h^n||+||\mathcal AU_{\mathcal R}-\mathcal A _h^nU_h^n||+||f_h^n-f||+||\mathcal A\rho(0)||)d\tau.
\end{aligned}
\end{equation*}
Thus, for the linear space-time reconstruction, a posteriori error estimate  in the sense of $L^\infty(0,T;L^2(\Omega))$-norm is proved.
\end{proof}

\subsection{A posteriori error estimate for the quadratic space-time reconstruction}

 In this subsection, we begin by introducing the quadratic space-time reconstruction to derive a posteriori error estimates in the $L^\infty(L^2)$-norm and $L^1(L^2)$-norm for $L1$ schemes.

Let $U^n_h\in V^n_0$ denote the discrete approximate solution produced by the L1 scheme (\ref{eq2.15}), and the linear space-time reconstruction $U_{\mathcal R}:[0,T]\to H^1_0$  has been defined in (\ref{eq3.3}). For $t\in I_1$, we introduce the fractional integral reconstruction, i.e.,
\begin{eqnarray}\label{eq5.1}
\widehat{U}_{\mathcal R}(t)&=&\widehat U_{\mathcal R}(0)-\partial^{-\alpha}_t \mathcal AU_{\mathcal R}(t)+\partial^{-\alpha}_t \widehat f_h^n\\
&=&U_{\mathcal R}(0)-\int_{0}^{t}\omega_\alpha(t-s)\mathcal AU_{\mathcal R}(s)ds+\int_{0}^{t}\omega_\alpha(t-s)\widehat f_h^n(s)ds\nonumber\\
&=&U_{\mathcal R}(t)-t\frac{\mathcal R^1 U_h^1-\mathcal R^0 U_h^0}{k_1}+ \frac{t^\alpha}{\Gamma(\alpha+1)}(f_h^0- \mathcal A \mathcal R^0U_h^0 )\nonumber\\
&&+\frac{t^{\alpha+1}}{\Gamma(\alpha+2)}\left(\frac{f_h^1-f_h^0}{k_1}-\frac{\mathcal A ( \mathcal R^1U_h^1  -\mathcal R^0U_h^0)}{k_1}\right),\nonumber
\end{eqnarray}
where $\widehat f_h^n(t)=\ell_{n,-1}(t)f_h^0+\ell_{n,0}(t)f_h^1$.

From (\ref{eq5.1}), it is easy to obtain the following pointwise equation
\begin{eqnarray}\label{eq4.2}
\partial^\alpha_t\widehat{U}_{\mathcal R}(t)+\mathcal AU_{\mathcal R}(t)=\widehat f_h^n(t),\qquad \forall t\in I_{1}.
\end{eqnarray}

From (\ref{eq1.1}) and (\ref{eq4.2}), we know that for $t\in I_{1}$, it holds that
\begin{eqnarray}\label{eq4.4}
\partial^\alpha_t\widehat \rho(t)+\mathcal A \rho(t)=-E_2^n(t)
\end{eqnarray}
with $\widehat \rho(t)=u(t)-\widehat U_{\mathcal R}(t)$ and $E_2^n(t)=\widehat f_h^n(t)-f(t)$.

 Then let $\widehat U_{\mathcal R}:[0,T]\to H^1_0$ denote the quadratic space-time reconstructions, given by
\begin{eqnarray}\label{eq4.1}
\widehat U_{\mathcal R}(t):=U_{\mathcal R}(t)+\frac{1}{2}(t-t_{n-1})(t-t_n)\mathcal R^n\widehat W^n_h
\end{eqnarray}
 where $t\in I_n$ for each $n\in\{2,\ldots,N\}$.

From the definition of the Caputo fractional derivative and the quadratic space-time reconstructions (\ref{eq4.1}), it follows that
\begin{eqnarray*}
\partial^\alpha_t\widehat U_{\mathcal R}(t)&=&\int^{t}_{0}\omega_{1-\alpha}(t-s)\widehat U_{\mathcal R}^\prime(s)ds\nonumber\\
&=&\partial^\alpha_t U_{\mathcal R}(t)+\sum\limits_{j=2}^{n-1}{\mathcal R}^j \widehat W_h^j\int^{t_{j}}_{t_{j-1}}\omega_{1-\alpha}(t-s)(s-t_{j-1/2})ds\nonumber\\
&&+{\mathcal R}^n\widehat W_h^n\int^{t}_{t_{n-1}}\omega_{1-\alpha}(t-s)(s-t_{n-1/2})ds\nonumber\\
&&+\int^{t_{1}}_{0}\omega_{1-\alpha}(t-s)\left(\frac{s^\alpha}{\Gamma(\alpha+1)}\left(\frac{f_h^1-f_h^0}{k_1}-\frac{\mathcal A ( \mathcal R^1U_h^1  -\mathcal R^0U_h^0)}{k_1}\right)+\frac{s^{\alpha-1}}{\Gamma(\alpha)}(f_h^0-{\mathcal R}^0 \mathcal A U_h^0)\right)ds\nonumber\\
&&-\frac{\mathcal R^1 U_h^1-\mathcal R^0 U_h^0}{\Gamma(2-\alpha)k_1}\left(t^{1-\alpha}-(t-t_1)^{1-\alpha}\right).
\end{eqnarray*}
Using integration by parts, we further obtain
\begin{eqnarray}\label{eq4.7}
\partial^\alpha_t\widehat U_{\mathcal R}(t)=\partial^\alpha_t U_{\mathcal R}(t)+\mathcal W_h^n,
\end{eqnarray}
where $\mathcal W_h^n$ is defined by
\begin{eqnarray}\label{eq4.8}
\mathcal W_h^n&=&-\frac{1}{2\Gamma(2-\alpha)}\sum\limits_{j=2}^{n-1}{\mathcal R}^j\widehat W_h^j
k_j\left[(t-t_j)^{1-\alpha}+(t-t_{j-1})^{1-\alpha}\right]\nonumber\\
&&-\frac{1}{\Gamma(3-\alpha)}\sum\limits_{j=2}^{n-1}{\mathcal R}^j\widehat W_h^j\left[(t-t_j)^{2-\alpha}-(t-t_{j-1})^{2-\alpha}\right]\\
&&-\frac{{\mathcal R}^n\widehat W_h^nk_n}{2\Gamma(2-\alpha)}(t-t_{n-1})^{1-\alpha}+\frac{{\mathcal R}^n\widehat W_h^n}{\Gamma(3-\alpha)}(t-t_{n-1})^{2-\alpha}\nonumber\\
&&+\int^{t_{1}}_{0}\omega_{1-\alpha}(t-s)\left(\frac{s^\alpha}{\Gamma(\alpha+1)}\left(\frac{f_h^1-f_h^0}{k_1}-\frac{\mathcal A ( \mathcal R^1U_h^1  -\mathcal R^0U_h^0)}{k_1}\right)+\frac{s^{\alpha-1}}{\Gamma(\alpha)}(f_h^0-{\mathcal R}^0 \mathcal A U_h^0)\right)ds\nonumber\\
&&-\frac{\mathcal R^1 U_h^1-\mathcal R^0 U_h^0}{\Gamma(2-\alpha)k_1}\left(t^{1-\alpha}-(t-t_1)^{1-\alpha}\right).\nonumber
\end{eqnarray}
Let $\widehat \rho=u-\widehat U_{\mathcal R}$. Then it follows from (\ref{eq1.1}) and (\ref{eq4.7}) that
\begin{eqnarray}\label{eq4.9}
\partial^\alpha_t\widehat \rho(t)+\mathcal A\rho(t)=-E_2^n(t),\qquad n\ge 2,
\end{eqnarray}
where $E_2^n(t)$ is defined by
\begin{eqnarray*}
E_2^n(t)&=&E_1^n(t)+\mathcal W_h^n\nonumber\\
&=&\partial^\alpha_t U_{\mathcal R}(t)+\mathcal AU_{\mathcal R}(t)-f(t)+\mathcal W_h^n\nonumber\\
&=&(\partial^\alpha_t U_{\mathcal R}-\bar\partial^\alpha_kU_h^n) +(\mathcal AU_{\mathcal R}-\mathcal A_h^nU_h^n)+(f_h^n-f)+\mathcal W_h^n.\nonumber\\
\end{eqnarray*}

Therefore, from (\ref{eq4.4}) and (\ref{eq4.9}), we can obtain a unified form of the error equation for the quadratic space-time reconstruction,  namely
\begin{eqnarray}\label{eq4.10}
\partial^\alpha_t\widehat \rho(t)+\mathcal A\rho(t)=-E_2^n(t),\qquad n\ge 1.
\end{eqnarray}

To analyze the contribution of each term to a posteriori error estimate, we divide the residual $E_2^n(t)$ into the following four terms:

\begin{equation*}
E_f^n=\left \{
\begin{aligned}
&f-\hat f_h^n,~&n=1,\\
&f- f_h^n,~&n\geq2;
\end{aligned}
 \right.
\quad E_{\mathcal W}^n=\left \{
\begin{aligned}
&0,~&n=1,\\
&\mathcal W_h^n,~&n\geq2;
\end{aligned}
 \right.
\end{equation*}
\begin{equation*}
E_{\alpha}^n=\left \{
\begin{aligned}
&0,~&n=1,\\
&\partial^\alpha_t U_{\mathcal R}-\bar\partial^\alpha_kU_h^n,~&n\geq2;
\end{aligned}
 \right.
\quad E_I^n=\left \{
\begin{aligned}
&0,~&n=1,\\
&\mathcal AU_{\mathcal R}-\mathcal A_h^nU_h^n,~&n\geq2.
\end{aligned}
 \right.
\end{equation*}

Using Lemma \ref{lem1}, the following $L^1(0,T;L^2(\Omega))$ norm  error bounds for the quadratic space-time reconstruction can be formulated from the error equation (\ref{eq4.10}).

\begin{theorem}[$L^1(0,T;L^2(\Omega))$ norm error estimate]\label{thm3}
  Let $C_{\alpha,T}=\frac{\Gamma(1-\alpha)T^{\alpha+1}}{2^\alpha}$, and $U_{\mathcal R}(t)$ be the L1 approximation to the solution of the problem (\ref{eq1.1})-(\ref{eq1.3}), $\widehat U_{\mathcal R}$ be the corresponding reconstruction of $U_{\mathcal R}$, $\rho=u-U_{\mathcal R}$ and $\widehat \rho=u-\widehat U_{\mathcal R}$. Then the following a posteriori error estimate is valid, that is,
\begin{equation*}
\begin{aligned}
\int_0^T||\widehat \rho(t)||dt&+\sqrt{\frac{2C_{\alpha,T}}{T}}\int_0^T\|\widehat{\rho}(t)\|_1dt+\sqrt{\frac{4C_{\alpha,T}}{T}}\int_0^T\|\rho(t)\|_1dt\leq
\sqrt{\frac{48C_{\alpha,T}T^{1-\alpha}}{\Gamma(2-\alpha)}}||\widehat \rho(0)||\\
&\quad+4\sqrt{3}C_1C_{\alpha,T}\int_0^T\|E_{ f}^n(t)+E_{\alpha}^n(t)+E_{\mathcal W}^n(t)\|dt+2\sqrt{3C_{\alpha,T}}\left(\int_0^T\|U_{\mathcal R}(t)-\widehat U_{\mathcal R}(t)\|_1^2dt\right)^{1/2}\\
&\quad+2\beta\sqrt{6C_{\alpha,T}}\left(\int_0^T\|\mathcal R ^n U_h^n-U_{\mathcal R}\|_1dt\right)^{1/2}.
\end{aligned}
\end{equation*}
\end{theorem}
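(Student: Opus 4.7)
The plan is to follow the strategy of Theorem~\ref{thm1} applied to the modified error equation~(\ref{eq4.10}), with the mixed argument $a(\rho,\widehat\rho)$ requiring a new trick. I would test (\ref{eq4.10}) in $L^2(\Omega)$ against $\widehat\rho$ and integrate over $[0,T]$:
\begin{equation*}
\int_0^T(\partial^\alpha_t\widehat\rho,\widehat\rho)\,dt+\int_0^T a(\rho,\widehat\rho)\,dt = -\int_0^T(E_2^n,\widehat\rho)\,dt.
\end{equation*}
Lemma~\ref{lem1} (with $g=\widehat\rho$, $H=L^2(\Omega)$) bounds the first integral from below by the usual weighted $\|\widehat\rho\|^2$ integral plus an initial correction in $\widehat\rho(0)$. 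Note that $\widehat\rho(0)=\rho(0)$ because $\widehat U_{\mathcal R}(0)=U_{\mathcal R}(0)$ from (\ref{eq5.1}).

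The essential novelty is the mixed term $a(\rho,\widehat\rho)$: the fractional derivative acts on $\widehat\rho$ but the elliptic operator acts on $\rho$. Exploiting the symmetry of $a$, I would use the polarization identity
\begin{equation*}
a(\rho,\widehat\rho) = \tfrac12\bigl[a(\rho,\rho)+a(\widehat\rho,\widehat\rho)-a(\widehat\rho-\rho,\widehat\rho-\rho)\bigr]
\end{equation*}
together with $\widehat\rho-\rho=U_{\mathcal R}-\widehat U_{\mathcal R}$ and continuity/coercivity of $a$ to produce simultaneously two coercive pieces $\tfrac12\|\rho\|_1^2$ and $\tfrac12\|\widehat\rho\|_1^2$, at the price of pushing $\tfrac{\beta}{2}\|U_{\mathcal R}-\widehat U_{\mathcal R}\|_1^2$ to the RHS. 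These two coercive contributions are what produce the two distinct $H^1$ integrals $\int_0^T\|\widehat\rho\|_1\,dt$ and $\int_0^T\|\rho\|_1\,dt$ on the LHS of the claim, while the correction becomes the $\int_0^T\|U_{\mathcal R}-\widehat U_{\mathcal R}\|_1^2\,dt$ term on the RHS.

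Next, split the residual $E_2^n = E_\alpha^n+E_I^n+E_f^n+E_{\mathcal W}^n$. The three non-elliptic pieces are handled together by Cauchy--Schwarz in $L^2(\Omega)$:
\begin{equation*}
\bigl|(E_\alpha^n+E_f^n+E_{\mathcal W}^n,\widehat\rho)\bigr|\le\|E_\alpha^n+E_f^n+E_{\mathcal W}^n\|\,\|\widehat\rho\|,
\end{equation*}
which supplies the $\int_0^T\|E_f^n+E_\alpha^n+E_{\mathcal W}^n\|\,dt$ contribution on the RHS. For $E_I^n=\mathcal AU_{\mathcal R}-\mathcal A_h^nU_h^n$, using $\widehat\rho\in H^1_0(\Omega)$ together with Definition~\ref{def3.1} gives
\begin{equation*}
(E_I^n,\widehat\rho) = a(U_{\mathcal R}-\mathcal R^n U_h^n,\widehat\rho)\le \beta\|U_{\mathcal R}-\mathcal R^n U_h^n\|_1\,\|\widehat\rho\|_1,
\end{equation*}
which, after a Young step, supplies the $\int_0^T\|\mathcal R^n U_h^n-U_{\mathcal R}\|_1\,dt$ term.

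Finally, combining everything, I would use $(T-t)^{-\alpha}+t^{-\alpha}\ge(T/2)^{-\alpha}$ to replace the weighted $\|\widehat\rho\|^2$ by plain $\|\widehat\rho\|^2$, apply Young's inequality to the four residual contributions with weights calibrated to absorb their $\|\widehat\rho\|$, $\|\widehat\rho\|_1$ and $\|\rho\|_1$ factors into the coercivity terms, and pass to $L^1(0,T)$ control via $(\int_0^T\|v\|\,dt)^2\le T\int_0^T\|v\|^2\,dt$ and the elementary $\sqrt{a_1+\cdots+a_m}\le\sqrt{a_1}+\cdots+\sqrt{a_m}$, exactly as in the last step of Theorem~\ref{thm1}. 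The main obstacle will be the bookkeeping of constants: the Young weights must be tuned so that the factor-$\sqrt 2$ asymmetry between $\sqrt{2C_{\alpha,T}/T}$ and $\sqrt{4C_{\alpha,T}/T}$ appears correctly, and this asymmetry is forced by the fact that the elliptic residual $(E_I^n,\widehat\rho)$ consumes part of the $\|\widehat\rho\|_1^2$ coercivity budget but not of the $\|\rho\|_1^2$ budget.
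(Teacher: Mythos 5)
Your proposal is correct and follows essentially the same route as the paper's own proof: testing the error equation (\ref{eq4.10}) against $\widehat\rho$, invoking Lemma \ref{lem1}, using the polarization identity $(\mathcal A\rho,\widehat\rho)=\tfrac12\bigl(\|\widehat\rho\|_1^2+\|\rho\|_1^2-\|U_{\mathcal R}-\widehat U_{\mathcal R}\|_1^2\bigr)$ to create the two coercive $H^1$ pieces, splitting $E_2^n$ into the same four parts with $E_I^n$ handled through $a(U_{\mathcal R}-\mathcal R^nU_h^n,\widehat\rho)$ plus Young, and closing with $(\int_0^T\|v\|dt)^2\le T\int_0^T\|v\|^2dt$ and the sum-of-squares step. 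Your final remark is also accurate: in the paper the Young step for $E_I^n$ removes $\tfrac14\int_0^T\|\widehat\rho\|_1^2dt$ from the $\tfrac12$ coercivity budget of $\widehat\rho$ only, which is precisely what produces the $\sqrt{2C_{\alpha,T}/T}$ versus $\sqrt{4C_{\alpha,T}/T}$ asymmetry in the stated constants.
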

\begin{proof}
Taking the inner product of the equation (\ref{eq4.9}) with $\widehat \rho$ and integrating it on the interval $[0,T]$, we can obtain
\begin{equation}\label{eq5.12}
\begin{aligned}
\int_0^T(\partial^\alpha_t\widehat \rho(t),\widehat \rho(t))dt+\int_0^T(\mathcal A\rho(t),\widehat \rho(t))dt=\int_0^T(-E_2^n(t),\widehat \rho(t))dt.
\end{aligned}
\end{equation}
Using the fact
\begin{eqnarray*}
(\mathcal A\rho(t),\widehat\rho(t))&=&\frac{1}{2}\left(\|\widehat{\rho}(t)\|^{2}_1+\|\rho(t)\|^{2}_1-\|\widehat{\rho}(t)-\rho(t)\|^{2}_1\right)\nonumber\\
&=&\frac{1}{2}\left(\|\widehat{\rho}(t)\|^{2}_1+\|\rho(t)\|^{2}_1-\|U_{\mathcal R}(t)-\widehat U_{\mathcal R}(t)\|^{2}_1\right),
\end{eqnarray*}
and applying the Lemma \ref{lem1} to the equation (\ref{eq5.12}), we get
\begin{equation}\label{eq4.12}
\begin{aligned}
&\frac{1}{2\Gamma(1-\alpha)}\int_0^T((T-t)^{-\alpha}+t^{-\alpha})||\widehat \rho(t)||^2dt+\frac{1}{2}\int_0^T\left(\|\widehat{\rho}(t)\|^{2}_1+\|\rho(t)\|^{2}_1\right)dt\leq \frac{1}{2}\int_0^T\|U_{\mathcal R}(t)-\widehat U_{\mathcal R}(t)\|^{2}_1dt\\
&\quad\quad+   \int_0^T(-E_f^n(t)-E_I^n(t)-E_{\alpha}^n(t)-E_{\mathcal W}^n(t),\widehat \rho(t))dt+ \frac{1}{\Gamma(1-\alpha)}\int_0^Tt^{-\alpha}(\widehat \rho(0),\widehat \rho(t))dt\\
&\quad \leq C_1\int_0^T\|-E_f^n(t)-E_{\alpha}^n(t)-E_{\mathcal W}^n(t)\|dt\cdot\int_0^T\|\widehat {\rho}(t)\|dt+\frac{1}{4\Gamma(1-\alpha)}\int_0^T((T-t)^{-\alpha}+t^{-\alpha})||\widehat \rho(t)||^2dt\\
&\quad\quad+\frac{||\widehat \rho(0)||^2}{\Gamma(1-\alpha)}\int_0^T((T-t)^{-\alpha}+t^{-\alpha})dt+\frac{1}{2}\int_0^T\|U_{\mathcal R}(t)-\widehat U_{\mathcal R}(t)\|^{2}_1dt+\int_0^T(-E_I^n(t),\widehat \rho(t))dt.
\end{aligned}
\end{equation}
Since the bilinear forms is continuous, combining the same terms and recalling the fact $(T-t)^{-\alpha}+t^{-\alpha})\geq (\frac{T}{2})^{-\alpha}$  in the above inequality yield
\begin{equation*}
\begin{aligned}
&\frac{2^\alpha}{4\Gamma(1-\alpha)T^\alpha}\int_0^T||\widehat \rho(t)||^2dt+\frac{1}{2}\int_0^T\left(\|\widehat{\rho}(t)\|^{2}_1+\|\rho(t)\|^{2}_1\right)dt\leq \frac{2T^{1-\alpha}}{\Gamma(2-\alpha)}||\widehat \rho(0)||^2+\beta^2\int_{t_1}^T\|\mathcal R ^n U_h^n-U_{\mathcal R}\|_1^2dt\\
&\qquad+\frac{1}{4}\int_0^T\|\widehat \rho(t)\|_1^2  dt+C_1\int_0^T\|E_f^n(t)+E_{\alpha}^n(t)+E_{\mathcal W}^n(t)\|dt\cdot\int_0^T\|\widehat {\rho}(t)\|dt
+\frac{1}{2}\int_0^T\|U_{\mathcal R}(t)-\widehat U_{\mathcal R}(t)\|^{2}_1dt.
\end{aligned}
\end{equation*}
Then using the inequality $(\int_0^T\|\rho(t)\|dt)^2 \leq T\int_0^T\|\rho(t)\|^2dt$ and the Young's inequalities, we further obtain
\begin{equation*}
\begin{aligned}
\frac{1}{4C_{\alpha,T}} \left(\int_0^T||\widehat \rho(t)||dt\right)^2&+\frac{1}{4T}\left(\int_0^T\|\widehat{\rho}(t)\|_1dt\right)^2+\frac{1}{2T}\left(\int_0^T\|\rho(t)\|_1dt\right)^2\leq \frac{2T^{1-\alpha}}{\Gamma(2-\alpha)}||\widehat \rho(0)||^2 \\
&\quad+\beta^2\int_{t_1}^T\|\mathcal R ^n U_h^n-U_{\mathcal R}\|_1^2dt+2C_1^2C_{\alpha,T}\left(  \int_0^T\|E_f^n(t)\|+\|E_{\alpha}^n(t)\|+\|E_{\mathcal W}^n(t\|dt \right)^2 \\
&\quad+\frac{1}{8C_{\alpha,T}}\left( \int_0^T||\widehat \rho(t)||dt \right)^2+ \frac{1}{2}\int_0^T\|U_{\mathcal R}(t)-\widehat U_{\mathcal R}(t)\|^{2}_1dt.
\end{aligned}
\end{equation*}
Therefore, using the properties of the sum of squares, we complete the proof of the Theorem \ref{thm3}.
\end{proof}

\begin{remark}
 By means of Lemma \ref{lem4}, we could obtain the computable expressions of the terms $\int_0^T \|U_{\mathcal R}(t)-\widehat U_{\mathcal R}(t)\|_1^2dt$,$~\int_0^T\|E_{\mathcal I}^n(t)\|dt$,$~\int_0^T\|E_{\mathcal W}^n(t)\|dt$ in Appendix 3-5, respectively.
 \end{remark}

As for the case of the quadratic space-time reconstruction, similar to the conclusion of the linear space-time reconstruction in Corollary \ref{cor2}, a posteriori error estimate in  the $L^2(0,T;L^2(\Omega))$-norm can also be deduced from Theorem \ref{thm3}, and the details are given in the following corollary.

\begin{corollary}[$L^2(0,T;L^2(\Omega))$ norm error estimate] \label{cor2}
Let $U_{\mathcal R}(t)$ be the L1 approximation to the solution of the problem (\ref{eq1.1})-(\ref{eq1.3}), $\widehat U_{\mathcal R}$ be the corresponding reconstruction of $U_{\mathcal R}$, $\rho=u-U_{\mathcal R}$ and $\widehat \rho=u-\widehat U_{\mathcal R}$. Then the following a posteriori error estimate is valid, that is,
\begin{equation*}
\begin{aligned}
 \int_0^T||\widehat \rho(t)||^2dt&+\frac{4C_{\alpha,T}}{T}\int_0^T\|\rho(t)\|^{2}_1dt\leq \frac{16C_{\alpha,T}}{\Gamma(2-\alpha)T^{\alpha}}||\widehat \rho(0)||^2
+\frac{4\beta^2C_{\alpha,T}}{T}\int_{t_1}^T\|\mathcal R ^n U_h^n-U_{\mathcal R}\|_1^2dt\\
&\quad+\frac{16C_1^2C_{\alpha,T}^2}{T}\left(  \int_0^T\|E_f^n(t)\|+\|E_{\alpha}^n(t)\|+\|E_{\mathcal W}^n(t\|dt \right)^2+\frac{4C_{\alpha,T}}{T}\int_0^T\|U_{\mathcal R}(t)-\widehat U_{\mathcal R}(t)\|^{2}_1dt.
\end{aligned}
\end{equation*}
\end{corollary}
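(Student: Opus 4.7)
The plan is to derive Corollary \ref{cor2} from the same inner-product identity used in Theorem \ref{thm3} but with a different deployment of Young's inequality, exactly mirroring how Corollary \ref{cor1} was extracted from Theorem \ref{thm1}. First I would repeat the opening moves of the proof of Theorem \ref{thm3}: pair the unified error equation $\partial^\alpha_t\widehat\rho(t)+\mathcal A\rho(t)=-E_2^n(t)$ with $\widehat\rho(t)$ in $L^2(\Omega)$, integrate over $[0,T]$, apply Lemma \ref{lem1} to the fractional derivative term, and use the symmetrization
$$(\mathcal A\rho,\widehat\rho)=\tfrac{1}{2}\bigl(\|\widehat\rho\|_1^{2}+\|\rho\|_1^{2}-\|U_{\mathcal R}-\widehat U_{\mathcal R}\|_1^{2}\bigr).$$
The $E_I^n$ contribution is handled as in Theorem \ref{thm3} by continuity of $a(\cdot,\cdot)$, which produces $\beta\|\mathcal R^nU_h^n-U_{\mathcal R}\|_1\|\widehat\rho\|_1$, and a Young's inequality absorbs the resulting $\tfrac{1}{4}\|\widehat\rho\|_1^{2}$ into the LHS. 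Using $(T-t)^{-\alpha}+t^{-\alpha}\ge (T/2)^{-\alpha}$ and rearranging gives, with $C_{\alpha,T}=\Gamma(1-\alpha)T^{\alpha+1}/2^\alpha$, the intermediate estimate
$$\frac{T}{4C_{\alpha,T}}\int_0^T\|\widehat\rho\|^{2}dt+\tfrac{1}{2}\!\int_0^T\!\|\rho\|_1^{2}dt+\tfrac{1}{4}\!\int_0^T\!\|\widehat\rho\|_1^{2}dt\le \text{RHS},$$
where RHS collects $\tfrac{2T^{1-\alpha}}{\Gamma(2-\alpha)}\|\widehat\rho(0)\|^{2}$, $\beta^{2}\int_{t_1}^T\|\mathcal R^nU_h^n-U_{\mathcal R}\|_1^{2}dt$, $\tfrac{1}{2}\int_0^T\|U_{\mathcal R}-\widehat U_{\mathcal R}\|_1^{2}dt$, and the residual product $C_1\int_0^T\|E_f^n+E_\alpha^n+E_{\mathcal W}^n\|dt\cdot\int_0^T\|\widehat\rho\|dt$.

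This is precisely the point of divergence from the proof of Theorem \ref{thm3}. There, one extracted $\int_0^T\|\widehat\rho\|dt$ on the LHS via the crude bound $(\int_0^T\|\widehat\rho\|dt)^{2}\le T\int_0^T\|\widehat\rho\|^{2}dt$ inside a Young's inequality, producing the $L^1(L^2)$ estimate. Here I would invoke the same Cauchy--Schwarz inequality $\int_0^T\|\widehat\rho\|dt\le T^{1/2}(\int_0^T\|\widehat\rho\|^{2}dt)^{1/2}$ but apply Young's inequality $ab\le\tfrac{1}{4\eta}a^{2}+\eta b^{2}$ with the parameter $\eta$ chosen so that $\eta T$ equals a definite fraction (say one half) of the LHS coefficient $\tfrac{T}{4C_{\alpha,T}}$, so that the $\int_0^T\|\widehat\rho\|^{2}dt$ term is absorbed rather than converted. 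Taking $\eta=1/(8C_{\alpha,T})$ produces a reciprocal factor of order $C_1^{2}C_{\alpha,T}$ in front of $(\int_0^T\|E\|dt)^{2}$, which after multiplying the whole inequality by $4C_{\alpha,T}/T$ should yield the prefactor $16C_1^{2}C_{\alpha,T}^{2}/T$ that appears in the statement.

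The final step is to drop the nonnegative term $\tfrac{1}{4}\int_0^T\|\widehat\rho\|_1^{2}dt$ from the LHS and multiply through by $4C_{\alpha,T}/T$; this scaling converts $\tfrac{T}{4C_{\alpha,T}}\int_0^T\|\widehat\rho\|^{2}dt$ and $\tfrac{1}{2}\int_0^T\|\rho\|_1^{2}dt$ into the required LHS of Corollary \ref{cor2}, and it rescales the four RHS pieces into the coefficients $\tfrac{16C_{\alpha,T}}{\Gamma(2-\alpha)T^{\alpha}}$, $\tfrac{4\beta^{2}C_{\alpha,T}}{T}$, $\tfrac{16C_1^{2}C_{\alpha,T}^{2}}{T}$, and $\tfrac{4C_{\alpha,T}}{T}$ stated in the corollary. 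The main obstacle is purely bookkeeping of the Young parameters at each stage so that all constants line up exactly as written; no new analytic ingredient beyond Lemma \ref{lem1}, continuity of $a$, and the variational manipulations already used in Theorem \ref{thm3} is needed, and the logical structure is a faithful transcription of the Theorem \ref{thm1} $\Rightarrow$ Corollary \ref{cor1} argument to the quadratic reconstruction error equation (\ref{eq4.10}).
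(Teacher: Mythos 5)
Your overall strategy coincides with the paper's: the paper also proves Corollary \ref{cor2} by restarting from the intermediate inequality (\ref{eq4.12}) in the proof of Theorem \ref{thm3}, bounding the $E_I^n$ contribution via continuity of $a(\cdot,\cdot)$, applying Cauchy--Schwarz together with Young's inequality with $\eta=1/(8C_{\alpha,T})$ to the residual product so that $\frac{T}{8C_{\alpha,T}}\int_0^T\|\widehat\rho\|^2dt$ is absorbed into the left-hand side, and then normalizing the coefficients. However, two constant-tracking errors prevent your argument from arriving at the inequality actually stated. The first concerns the $E_I^n$ term: you carry over the splitting used in Theorem \ref{thm3}, namely $\beta ab\le \beta^2a^2+\frac14b^2$, which leaves $\beta^{2}\int_{t_1}^T\|\mathcal R^nU_h^n-U_{\mathcal R}\|_1^{2}dt$ on the right and $\frac14\int_0^T\|\widehat\rho\|_1^2dt$ surviving on the left. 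The paper instead re-splits this term as $\beta ab\le \frac{\beta^2}{2}a^2+\frac12b^2$, so that $\frac12\int_0^T\|\widehat\rho\|_1^2dt$ cancels \emph{exactly} against the left-hand side and the elliptic term enters with coefficient $\frac{\beta^2}{2}$; after normalization this produces the stated $\frac{4\beta^2C_{\alpha,T}}{T}$, whereas your splitting produces $\frac{8\beta^2C_{\alpha,T}}{T}$. Since your resulting inequality is weaker than the one claimed (by a factor $2$ on that term), it does not establish the corollary with the constants as written.

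The second error is the final scaling. After the Young absorption with $\eta=1/(8C_{\alpha,T})$, the left-hand coefficient of $\int_0^T\|\widehat\rho\|^2dt$ is $\frac{T}{4C_{\alpha,T}}-\frac{T}{8C_{\alpha,T}}=\frac{T}{8C_{\alpha,T}}$, so the whole inequality must be multiplied by $\frac{8C_{\alpha,T}}{T}$ (this is what the paper's ``normalizing the coefficients'' amounts to): that factor turns $\frac12\int_0^T\|\rho\|_1^2dt$ into $\frac{4C_{\alpha,T}}{T}\int_0^T\|\rho\|_1^2dt$, the initial-data term into $\frac{16C_{\alpha,T}}{\Gamma(2-\alpha)T^{\alpha}}\|\widehat\rho(0)\|^2$, and $2C_1^2C_{\alpha,T}\bigl(\int_0^T\|E\|dt\bigr)^2$ into $\frac{16C_1^2C_{\alpha,T}^2}{T}\bigl(\int_0^T\|E\|dt\bigr)^2$. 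Multiplying by $\frac{4C_{\alpha,T}}{T}$, as you propose, instead leaves $\frac12\int_0^T\|\widehat\rho\|^2dt$ on the left and gives $\frac{8C_1^2C_{\alpha,T}^2}{T}$ on the residual term, contradicting your own assertion that this choice yields the prefactor $\frac{16C_1^2C_{\alpha,T}^2}{T}$. Both defects are repairable without any new analytic ingredient --- adopt the equal-weight Young splitting for the $E_I^n$ term and normalize by $\frac{8C_{\alpha,T}}{T}$ --- but as written the proposal does not reproduce the stated constants.
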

\begin{proof}
Recalling the fact $(T-t)^{-\alpha}+t^{-\alpha})\geq (\frac{T}{2})^{-\alpha}$, we start the proof of Corollary \ref{cor2} from (\ref{eq4.12}) in Theorem \ref{thm3}.
\begin{equation*}
\begin{aligned}
&\frac{2^\alpha}{4\Gamma(1-\alpha)T^\alpha}\int_0^T||\widehat \rho(t)||^2dt+\frac{1}{2}\int_0^T\left(\|\widehat{\rho}(t)\|^{2}_1+\|\rho(t)\|^{2}_1\right)dt\leq \frac{2T^{1-\alpha}}{\Gamma(2-\alpha)}||\widehat \rho(0)||^2+\frac{\beta^2}{2}\int_{t_1}^T\|\mathcal R ^n U_h^n-U_{\mathcal R}\|_1^2dt\\
&\qquad+\frac{1}{2}\int_0^T\|\widehat \rho(t)\|_1^2  dt+C_1\int_0^T\|E_f^n(t)+E_{\alpha}^n(t)+E_{\mathcal W}^n(t)\|dt\cdot\int_0^T\|\widehat {\rho}(t)\|dt
+\frac{1}{2}\int_0^T\|U_{\mathcal R}(t)-\widehat U_{\mathcal R}(t)\|^{2}_1dt.
\end{aligned}
\end{equation*}
Then using the inequality $(\int_0^T\|\rho(t)\|dt)^2 \leq T\int_0^T\|\rho(t)\|^2dt$ and the Young's inequalities, we further obtain
\begin{equation*}
\begin{aligned}
\frac{T}{4C_{\alpha,T}} \int_0^T||\widehat \rho(t)||^2dt&+\frac{1}{2}\int_0^T\|\rho(t)\|^{2}_1dt\leq \frac{2T^{1-\alpha}}{\Gamma(2-\alpha)}||\widehat \rho(0)||^2
+\frac{\beta^2}{2}\int_{t_1}^T\|\mathcal R ^n U_h^n-U_{\mathcal R}\|_1^2dt\\
&\quad+2C_1^2C_{\alpha,T}\left(  \int_0^T\|E_f^n(t)\|+\|E_{\alpha}^n(t)\|+\|E_{\mathcal W}^n(t\|dt \right)^2+\frac{T}{8C_{\alpha,T}} \int_0^T||\widehat \rho(t)||^2dt\\
&\quad+\frac{1}{2}\int_0^T\|U_{\mathcal R}(t)-\widehat U_{\mathcal R}(t)\|^{2}_1dt.
\end{aligned}
\end{equation*}
Combining the same terms in the above equation and normalizing the coefficients yields
\begin{equation*}
\begin{aligned}
 \int_0^T||\widehat \rho(t)||^2dt&+\frac{4C_{\alpha,T}}{T}\int_0^T\|\rho(t)\|^{2}_1dt\leq \frac{16C_{\alpha,T}}{\Gamma(2-\alpha)T^{\alpha}}||\widehat \rho(0)||^2
+\frac{4\beta^2C_{\alpha,T}}{T}\int_{t_1}^T\|\mathcal R ^n U_h^n-U_{\mathcal R}\|_1^2dt\\
&\quad+\frac{16C_1^2C_{\alpha,T}^2}{T}\left(  \int_0^T\|E_f^n(t)\|+\|E_{\alpha}^n(t)\|+\|E_{\mathcal W}^n(t\|dt \right)^2+\frac{4C_{\alpha,T}}{T}\int_0^T\|U_{\mathcal R}(t)-\widehat U_{\mathcal R}(t)\|^{2}_1dt.
\end{aligned}
\end{equation*}
Thus for the quadratic space-time reconstruction, the $L^2(0,T;L^2(\Omega))$-norm error estimate is stated in Corollary \ref{cor2}.
\end{proof}

Then we demonstrate   a posteriori error estimate in the sense of $L^\infty(0,T;L^2(\Omega))$-norm for the quadratic space-time reconstruction in the following theorem.
\begin{theorem}[$L^\infty(0,T;L^2(\Omega))$ norm error estimate] \label{thm4}
 Let $U_{\mathcal R}(t)$ be the L1 approximation to the solution of the problem (\ref{eq1.1})-(\ref{eq1.3}), $\widehat U_{\mathcal R}$ be the corresponding reconstruction of $U_{\mathcal R}$, $\rho=u-U_{\mathcal R}$ and $\widehat \rho=u-\widehat U_{\mathcal R}$. Then the following a posteriori error estimate is valid, for $t\in I_n$, $n\ge 1$,
\begin{equation*}
\begin{aligned}
||\widehat \rho(t)||&\leq ||\widehat \rho(0)||+\frac{1}{\sin\theta}C_{\alpha,\varphi}\int_0^t(t-\tau)^{\alpha-1}(||E_f^n(\tau)||+||E_{\alpha}^n(\tau)||+||E_{\mathcal W}^n(\tau)||))d\tau \\
&\quad+ \frac{1}{\sin\theta}C_{\alpha,\varphi}\int_0^t(t-\tau)^{\alpha-1}||\mathcal A_h^n U_h^n-\mathcal A\widehat U_{\mathcal R}||d\tau
+\frac{1}{\sin\theta}C_{\alpha,\varphi}\int_0^t(t-\tau)^{\alpha-1}||\mathcal A\widehat \rho(0)||d\tau.
\end{aligned}
\end{equation*}
\end{theorem}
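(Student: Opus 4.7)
The plan is to reduce the statement to an application of Lemma \ref{lem2} applied to the error equation (\ref{eq4.10}), exactly in the spirit of Theorem \ref{thm2}, but with one extra algebraic manipulation needed because (\ref{eq4.10}) mixes the two different errors $\widehat\rho$ and $\rho$.

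First I would start from the error equation $\partial_t^\alpha \widehat\rho(t)+\mathcal A\rho(t)=-E_2^n(t)$ and rewrite it so that both the Caputo derivative and the elliptic operator act on the same function $\widehat\rho$. Using $\widehat\rho-\rho=U_{\mathcal R}-\widehat U_{\mathcal R}$, we have $\mathcal A\rho=\mathcal A\widehat\rho+\mathcal A(\widehat U_{\mathcal R}-U_{\mathcal R})$, so that
\begin{equation*}
\partial_t^\alpha \widehat\rho(t)+\mathcal A\widehat\rho(t)=-E_2^n(t)-\mathcal A(\widehat U_{\mathcal R}(t)-U_{\mathcal R}(t)).
\end{equation*}
Next I would absorb the interior-residual piece of $E_2^n$ into this correction: recalling the splitting $E_2^n=E_f^n+E_\alpha^n+E_I^n+E_{\mathcal W}^n$ with $E_I^n=\mathcal A U_{\mathcal R}-\mathcal A_h^n U_h^n$, the cancellation
\begin{equation*}
-E_I^n(t)-\mathcal A(\widehat U_{\mathcal R}(t)-U_{\mathcal R}(t))=\mathcal A_h^n U_h^n-\mathcal A\widehat U_{\mathcal R}(t)
\end{equation*}
rewrites the right-hand side as the sum $-E_f^n-E_\alpha^n-E_{\mathcal W}^n+(\mathcal A_h^n U_h^n-\mathcal A\widehat U_{\mathcal R})$, which already matches the four bracketed integrands in the claimed bound.

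At this point the identity is in the exact form $\partial_t^\alpha \widehat\rho+\mathcal A\widehat\rho=F$ with a known forcing $F$ and initial value $\widehat\rho(0)$, so I would feed it directly into Lemma \ref{lem2} (with $u_0=\widehat\rho(0)$, $f=F$). This yields
\begin{equation*}
\|\widehat\rho(t)\|\le \|\widehat\rho(0)\|+\tfrac{1}{\sin\theta}C_{\alpha,\varphi}\int_0^t(t-\tau)^{\alpha-1}\|F(\tau)-\mathcal A\widehat\rho(0)\|\,d\tau,
\end{equation*}
and a single application of the triangle inequality under the integral splits the five contributions $\|E_f^n\|+\|E_\alpha^n\|+\|E_{\mathcal W}^n\|+\|\mathcal A_h^n U_h^n-\mathcal A\widehat U_{\mathcal R}\|+\|\mathcal A\widehat\rho(0)\|$ exactly as in the statement.

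The only nontrivial point is the algebraic rewriting in the first two paragraphs, and in particular checking that $\widehat\rho(0)=\rho(0)$ is smooth enough (lying in $D(\mathcal A)$) to legitimize the use of Lemma \ref{lem2}; this will follow from the definition of $U_{\mathcal R}(0)$ and $\widehat U_{\mathcal R}(0)$ together with the standing regularity assumption $u_0\in D(\mathcal A)$. The pointwise bound is valid for all $t\in I_n$ with $n\ge 1$ because the unified error equation (\ref{eq4.10}) was already established for both $n=1$ and $n\ge 2$, so no case distinction is needed in this final step.
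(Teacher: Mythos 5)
Your proposal is correct and takes essentially the same route as the paper's own proof: the paper likewise rewrites the error equation so that $\mathcal A$ acts on $\widehat\rho$ (its equation (\ref{eq5.16}), which is exactly your rewriting, with the $E_I^n$ cancellation producing the term $\mathcal A_h^n U_h^n-\mathcal A\widehat U_{\mathcal R}$), then applies Lemma \ref{lem2} and finishes with the triangle inequality. Your added observations---that $\widehat\rho(0)=\rho(0)$ since $\widehat U_{\mathcal R}(0)=U_{\mathcal R}(0)=\mathcal R^0U_h^0$, and that invoking Lemma \ref{lem2} needs $\widehat\rho(0)\in D(\mathcal A)$---are points of care the paper passes over silently, but they do not alter the argument.
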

\begin{proof}
Firstly, we rewrite equation (\ref{eq4.9}) in the following form
\begin{eqnarray}\label{eq5.16}
\partial^\alpha_t\widehat \rho(t)+\mathcal A\widehat \rho(t)=-E_2^n(t)+\mathcal A\left(U_{\mathcal R}(t)-\widehat U_{\mathcal R}(t)\right).
\end{eqnarray}
Then applying the Lemma \ref{lem2} to (\ref{eq5.16}) gives
\begin{equation*}
\begin{aligned}
||\widehat \rho(t)||&\leq ||\widehat \rho(0)||+\frac{1}{\sin\theta}C_{\alpha,\varphi}\int_0^t(t-\tau)^{\alpha-1}||-E_f^n(\tau)-E_{\alpha}^n(\tau)-E_{\mathcal W}^n(\tau)+(\mathcal A_h^n U_h^n-\mathcal A\widehat U_{\mathcal R}(\tau))-\mathcal A\widehat \rho(0)||d\tau\\
&\leq ||\widehat \rho(0)||+\frac{1}{\sin\theta}C_{\alpha,\varphi}\int_0^t(t-\tau)^{\alpha-1}(||E_f^n(\tau)||+||E_{\alpha}^n(\tau)||+||E_{\mathcal W}^n(\tau)||))d\tau \\
&\quad+ \frac{1}{\sin\theta}C_{\alpha,\varphi}\int_0^t(t-\tau)^{\alpha-1}||\mathcal A_h^n U_h^n-\mathcal A\widehat U_{\mathcal R}(\tau))||d\tau
+\frac{1}{\sin\theta}C_{\alpha,\varphi}\int_0^t(t-\tau)^{\alpha-1}||\mathcal A\widehat \rho(0)||d\tau.
\end{aligned}
\end{equation*}
Therefore, we complete the proof of  a posteriori error estimate in the $L^\infty(0,T;L^2(\Omega))$-norm.
\end{proof}

%{\rm Remark 4:} Due to the existence of operators $\mathcal A$ and $\mathcal R^n$, the term $\int_0^t(t-\tau)^{\alpha-1}||\mathcal A\left(U_{\mathcal R}(\tau)-\widehat U_{\mathcal R}(\tau)\right)||d\tau$ cannot be directly computed, so we present its computable expressions in Appendix 7.
\section{Final error estimates of the fully discrete scheme}
In this section, with the help of triangular inequality and  aforementioned a posteriori error estimates, we are devoted to giving optimal global final error estimates  and pointwise final error estimates for the linear space-time reconstruction and quadratic space-time reconstruction.
\subsection{Final error estimates for the linear space-time reconstructions}
In this subsection, we will investigate the final error estimates for the linear space-time reconstruction. To do this, we divide the final error estimates into several terms and control each one separately.
\begin{eqnarray*}
\|u-U_h\|\leq \|u-U_{\mathcal R}\|+\|U_{\mathcal R}-U_h\|.
\end{eqnarray*}
Since the term $\|u-U_{\mathcal R}\|$ has already been discussed as a posteriori error estimate, we only need to analyze the term $\|U_{\mathcal R}-U_h\|$. For $t\in I_n$, by the definition of the linear reconstruction (\ref{eq2.5}) and the linear space-time reconstruction (\ref{eq3.3}) and using Lemma \ref{lem4}, we can obtain
\begin{eqnarray}\label{eq5.3}
\|U_{\mathcal R}-U_h\| &=& \| \ell_{n,0}(t)(\mathcal R^n U^{n}_h-U^{n}_h)+\ell_{n,-1}(t)(\mathcal R^{n-1}U^{n-1}_h-U^{n-1}_h) \|\\
&\leq& max\{ \|\mathcal R^n U^{n}_h-U^{n}_h\|,\|\mathcal R^{n-1}U^{n-1}_h-U^{n-1}_h\| \}\nonumber \\
&\leq& max\{ \mathcal E_{e0}(U^{n}_h),\mathcal E_{e0}(U^{n-1}_h)\}\nonumber.
\end{eqnarray}

Therefore, for the linear space-time reconstruction,  global and pointwise final error estimates can be deduced  from the triangular inequality and a posteriori error estimate in the following two theorems, respectively.
\begin{theorem}[A global final error estimate]\label{thm5}
 Let $U_h(t),~U_{\mathcal R}(t)$ be the L1 approximation to the solution of the problem (\ref{eq1.1})-(\ref{eq1.3}), and the error $\rho(t):=u(t)-U_{\mathcal R}(t)$. Then the following a posteriori error estimate is valid, that is,
\begin{equation*}
\begin{aligned}
\int_0^T||u-U_h||dt &\leq T\max\limits_{n=0}^N \mathcal E_{e0}(U^{n}_h)+8\sqrt{\frac{T^{1-\alpha}C_{\alpha,T}}{\Gamma(2-\alpha)}}||\rho(0)||+8\sqrt{2}C_1C_{\alpha,T}\int_{0}^{T}\|\bar\partial^\alpha_k U^n_h-\partial^\alpha_t U_{\mathcal R}\|dt\\
&\quad+4\beta\sqrt{C_{\alpha,T}}\left(\int_0^T\|\mathcal R^n U_h^n-U_{\mathcal R}\|^2_1dt\right)^{1/2}+8\sqrt{T}C_{\alpha,T}\left(\int_{0}^{T}\|f-f_h^n\|^2dt\right)^{1/2}.
\end{aligned}
\end{equation*}
\end{theorem}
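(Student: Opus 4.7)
The plan is to decompose $u - U_h$ via the triangle inequality through the linear space-time reconstruction $U_{\mathcal R}$, namely
\[
\|u - U_h\| \le \|u - U_{\mathcal R}\| + \|U_{\mathcal R} - U_h\|,
\]
integrate over $(0,T)$, and then apply Theorem \ref{thm1} to the first term and the pointwise reconstruction bound (\ref{eq5.3}) to the second. This is essentially an assembly step: all the delicate analysis has already been carried out in Section 4, and the remaining task is to see that the local reconstruction gap is uniformly controlled by the elliptic reconstruction error $\mathcal E_{e0}$.

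First I would handle the reconstruction discrepancy $\|U_{\mathcal R} - U_h\|$. For $t \in I_n$, both $U_{\mathcal R}(t)$ and $U_h(t)$ are linear in $t$ with nodal values $\mathcal R^{n-1} U_h^{n-1}, \mathcal R^{n} U_h^{n}$ and $U_h^{n-1}, U_h^{n}$ respectively, sharing the same linear interpolation weights $\ell_{n,-1}(t),\ell_{n,0}(t) \ge 0$ with $\ell_{n,-1}(t)+\ell_{n,0}(t)=1$. This is exactly the setting of the pointwise estimate (\ref{eq5.3}), from which
\[
\|U_{\mathcal R}(t) - U_h(t)\| \le \max\bigl\{\mathcal E_{e0}(U_h^{n-1}),\mathcal E_{e0}(U_h^{n})\bigr\} \le \max_{0\le m\le N}\mathcal E_{e0}(U_h^{m}),\qquad t\in I_n.
\]
Integrating this uniform bound over $(0,T)$ produces the first term $T\max_{n}\mathcal E_{e0}(U_h^n)$ in the stated estimate.

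Next I would invoke Theorem \ref{thm1} verbatim to bound $\int_0^T \|\rho(t)\|\,dt = \int_0^T \|u(t) - U_{\mathcal R}(t)\|\,dt$ by the sum of the initial-error contribution $8\sqrt{T^{1-\alpha}C_{\alpha,T}/\Gamma(2-\alpha)}\,\|\rho(0)\|$, the fractional derivative residual $8\sqrt{2}C_1 C_{\alpha,T}\int_0^T\|\bar\partial^\alpha_k U_h^n - \partial^\alpha_t U_{\mathcal R}\|\,dt$, the elliptic $H^1$ contribution $4\beta\sqrt{C_{\alpha,T}}\bigl(\int_0^T\|\mathcal R^n U_h^n - U_{\mathcal R}\|_1^2 dt\bigr)^{1/2}$, and the data-oscillation term $8\sqrt{T}C_{\alpha,T}\bigl(\int_0^T\|f - f_h^n\|^2 dt\bigr)^{1/2}$. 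Here I simply discard the nonnegative $\sqrt{8C_{\alpha,T}/T}\int_0^T\|\rho(t)\|_1\,dt$ on the left-hand side of the Theorem \ref{thm1} estimate, which is valid since both terms there are nonnegative. Summing the two bounds yields the inequality of Theorem \ref{thm5}.

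The main (non-)obstacle: the only substantive point is the elementary observation that a difference of two linear interpolants on $I_n$ sharing the same convex weights is pointwise dominated by the maximum of the nodal discrepancies; everything else is a mechanical combination of Theorem \ref{thm1} with (\ref{eq5.3}). Consequently I do not anticipate any new technical difficulty in this proof — the heavy lifting was done in deriving the a posteriori bound of Theorem \ref{thm1} itself.
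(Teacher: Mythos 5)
Your proposal is correct and follows exactly the paper's own argument: the triangle inequality through $U_{\mathcal R}$, the pointwise bound (\ref{eq5.3}) (nodal convex-combination estimate via Lemma \ref{lem4}) integrated over $(0,T)$ to get $T\max_n \mathcal E_{e0}(U_h^n)$, and Theorem \ref{thm1} with the nonnegative $H^1$ term discarded. No discrepancy with the paper's proof.
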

\begin{proof}
By means of (\ref{eq5.3}) and Theorem \ref{thm1}, a global final error estimate is  formulated and the proof of Theorem \ref{thm5} is completed.
\end{proof}

\begin{theorem}[A pointwise final error estimate] \label{thm6}
 Let $U_h(T),~U_{\mathcal R}(t)$ be the L1 approximation to the solution of the problem (\ref{eq1.1})-(\ref{eq1.3}), and the error $\rho(t):=u(t)-U_{\mathcal R}(t)$. Then the following a posteriori error estimate is valid, that is, for $t\in I_n$, $n\ge 1$,
\begin{equation*}
\begin{aligned}
||u-U_h||&\leq  max\{ \mathcal E_{e0}(U^{n}_h),\mathcal E_{e0}(U^{n-1}_h)\}+||\rho(0)||\\
&\quad+\frac{1}{\sin\theta}C_{\alpha,\varphi}\int_0^t(t-\tau)^{\alpha-1}(||\partial^\alpha_t U_{\mathcal R}-\bar\partial^\alpha_kU_h^n||+||\mathcal AU_{\mathcal R}-\mathcal A _h^nU_h^n||+||f_h^n-f||+||\mathcal A\rho(0)||)d\tau.
\end{aligned}
\end{equation*}
\end{theorem}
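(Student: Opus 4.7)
The proof will proceed by splitting the total error via the triangle inequality
\begin{equation*}
\|u(t)-U_h(t)\|\leq \|u(t)-U_{\mathcal R}(t)\|+\|U_{\mathcal R}(t)-U_h(t)\|,
\end{equation*}
and bounding each summand separately using results that have already been established. The first step is to recycle the estimate that was obtained for the global final error (see (\ref{eq5.3})): for $t\in I_n$, the linear-in-time structure of both $U_h$ and $U_{\mathcal R}$ together with the convex combination identity
\begin{equation*}
U_{\mathcal R}(t)-U_h(t)=\ell_{n,0}(t)\bigl(\mathcal R^n U_h^n-U_h^n\bigr)+\ell_{n,-1}(t)\bigl(\mathcal R^{n-1}U_h^{n-1}-U_h^{n-1}\bigr),
\end{equation*}
combined with $\ell_{n,0}(t),\ell_{n,-1}(t)\ge 0$ and $\ell_{n,0}(t)+\ell_{n,-1}(t)=1$, yields at once
\begin{equation*}
\|U_{\mathcal R}(t)-U_h(t)\|\leq \max\bigl\{\mathcal E_{e0}(U_h^n),\mathcal E_{e0}(U_h^{n-1})\bigr\}
\end{equation*}
by Lemma \ref{lem4} applied to $U_h^n$ and $U_h^{n-1}$.

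The second step is to control $\|u(t)-U_{\mathcal R}(t)\|=\|\rho(t)\|$ by invoking Theorem \ref{thm2} directly: this gives a pointwise-in-time estimate of the form
\begin{equation*}
\|\rho(t)\|\leq \|\rho(0)\|+\tfrac{1}{\sin\theta}C_{\alpha,\varphi}\int_0^t(t-\tau)^{\alpha-1}\bigl(\|\partial^\alpha_t U_{\mathcal R}-\bar\partial^\alpha_k U_h^n\|+\|\mathcal AU_{\mathcal R}-\mathcal A_h^n U_h^n\|+\|f_h^n-f\|+\|\mathcal A\rho(0)\|\bigr)d\tau.
\end{equation*}
Adding the two bounds and collecting terms produces exactly the claimed inequality.

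Since both ingredients are already available in the paper (the bound \eqref{eq5.3} for the reconstruction error and Theorem \ref{thm2} for the pointwise error of $\rho$), there is essentially no obstacle to speak of: the statement follows from the triangle inequality plus a substitution. The only minor bookkeeping point is to make sure that the two bounds are applied on the same subinterval $I_n$, so that the $\max$ appearing in the reconstruction term is taken over the two nodal values $U_h^n$ and $U_h^{n-1}$ that actually contribute to $U_{\mathcal R}(t)$ at that value of $t$. No further estimates, no further use of Lemma \ref{lem1} or Lemma \ref{lem2}, and no additional integration by parts are required, since all the hard analytic work has been absorbed into Theorem \ref{thm2}.
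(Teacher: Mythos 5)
Your proposal is correct and takes essentially the same route as the paper: the paper's proof of Theorem \ref{thm6} consists precisely of combining the triangle inequality $\|u-U_h\|\le\|u-U_{\mathcal R}\|+\|U_{\mathcal R}-U_h\|$ with the reconstruction bound (\ref{eq5.3}) (itself obtained from the convex-combination identity and Lemma \ref{lem4}) and the pointwise estimate of Theorem \ref{thm2}. Your write-up merely spells out the details of (\ref{eq5.3}) that the paper states earlier in the same subsection, so there is no substantive difference.
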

\begin{proof}
Similarly, from (\ref{eq5.3}) and Theorem \ref{thm2}, a pointwise final error estimate can be derived and the proof of Theorem \ref{thm6} can be completed.
\end{proof}

\subsection{Final error estimates for the quadratic space-time reconstruction}
In this subsection, for the quadratic space-time reconstruction, we are committed to giving its a global final error estimate and a pointwise final error estimate. In order to achieve this goal, the final error for the quadratic space-time reconstruction can be divided into several parts as follow:
\begin{eqnarray}\label{eq5.2}
\|u-U_h\|\leq \|u-\widehat U_{\mathcal R}\|+\|\widehat U_{\mathcal R}- \widehat U_h \|+\|\widehat U_h-U_h\|.
\end{eqnarray}
Since the first term on the right-hand side of (\ref{eq5.2}) is discussed in the Section 4, our task becomes to control the last two terms. Then in view of equation (\ref{eq4.1}) and Lemma \ref{lem4}, the elliptic  reconstruction error $\|\widehat U_{\mathcal R}- \widehat U_h \|$ can be estimated by
\begin{eqnarray}\label{eq5.5}
\|\widehat U_{\mathcal R}- \widehat U_h \| \leq \mathcal E_{e0}(\widehat U_h ).
\end{eqnarray}
Meanwhile, using the fact that $(t-t_{n-1})(t-t_n)\leq \frac{k_n^2}{4}$, the time reconstruction error $\|\widehat U_h-U_h\|$ can be bounded from (\ref{eq4.6}) by
\begin{eqnarray}\label{eq5.6}
\|\widehat U_h-U_h\| \leq \frac{k_n^2}{8}\|\widehat W^{n}_h\|.
\end{eqnarray}

Therefore, from the above results combined with  Theorem \ref{thm3} and Theorem \ref{thm4}, we can obtain  a global final error estimate and pointwise error estimate for the quadratic space-time reconstruction in the following two theorems, respectively.
\begin{theorem}[A global final error estimate]\label{thm7}
 Let $U_{\mathcal R}(t)$ be the L1 approximation to the solution of the problem (\ref{eq1.1})-(\ref{eq1.3}), $\widehat U_{\mathcal R}$ be the corresponding reconstruction of $U_{\mathcal R}$, $\rho=u-U_{\mathcal R}$ and $\widehat \rho=u-\widehat U_{\mathcal R}$. Then the following a posteriori error estimate is valid, that is,
\begin{equation*}
\begin{aligned}
\int_0^T||u-U_h ||dt&\leq \int_0^T \mathcal E_{e0}(\widehat U_h )dt + \sum\limits_{n=1}^N \frac{k_n^3}{8} \|\widehat W^{n}_h\|+\sqrt{\frac{48C_{\alpha,T}T^{1-\alpha}}{\Gamma(2-\alpha)}}||\widehat \rho(0)||\\
&\quad+4\sqrt{3}C_1C_{\alpha,T}\int_0^T\|E_{ f}^n(t)+E_{\alpha}^n(t)+E_{\mathcal W}^n(t)\|dt+2\sqrt{3C_{\alpha,T}}\left(\int_0^T\|U_{\mathcal R}(t)-\widehat U_{\mathcal R}(t)\|_1^2dt\right)^{1/2}\\
&\quad+2\beta\sqrt{6C_{\alpha,T}}\left(\int_0^T\|\mathcal R ^n U_h^n-U_{\mathcal R}\|_1dt\right)^{1/2}.
\end{aligned}
\end{equation*}
\end{theorem}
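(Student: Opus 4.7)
The plan is to bound $\int_0^T \|u - U_h\|\,dt$ by decomposing the pointwise error according to (5.2) and then controlling each of the three resulting pieces with tools already at hand. Concretely, I would write
\[
\|u - U_h\| \;\leq\; \|\widehat{\rho}\| \;+\; \|\widehat{U}_{\mathcal{R}} - \widehat{U}_h\| \;+\; \|\widehat{U}_h - U_h\|,
\]
integrate over $[0,T]$, and bound the three integrals separately.

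The first integral is exactly the quantity estimated in Theorem \ref{thm3}, so I would simply substitute its right-hand side; this already produces the initial-data term $\sqrt{48 C_{\alpha,T}T^{1-\alpha}/\Gamma(2-\alpha)}\,\|\widehat{\rho}(0)\|$, the residual contribution involving $E_f^n + E_\alpha^n + E_{\mathcal{W}}^n$, the reconstruction discrepancy $\|U_{\mathcal{R}} - \widehat{U}_{\mathcal{R}}\|_1$, and the $H^1$-elliptic term $\|\mathcal{R}^n U_h^n - U_{\mathcal{R}}\|_1$ appearing in the target bound. For the second integral I would invoke (5.5): since $\widehat{U}_h(t) \in V_0^n$ for $t \in I_n$, Lemma \ref{lem4} applied elementwise gives $\|\widehat{U}_{\mathcal{R}}(t) - \widehat{U}_h(t)\| \leq \mathcal{E}_{e0}(\widehat{U}_h(t))$, and integrating in time yields the leading term $\int_0^T \mathcal{E}_{e0}(\widehat{U}_h)\,dt$. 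For the third integral I would combine (5.6) with the elementary bound $|(t-t_{n-1})(t-t_n)| \leq k_n^2/4$ on $I_n$ to obtain $\|\widehat{U}_h - U_h\| \leq \tfrac{k_n^2}{8}\|\widehat{W}_h^n\|$; integrating over $I_n$ and summing over $n$ produces the clean telescoping contribution $\sum_{n=1}^N \tfrac{k_n^3}{8}\|\widehat{W}_h^n\|$.

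Adding the three estimates and renaming constants reproduces the claimed inequality. The only delicate bookkeeping step, and the one I would verify most carefully, is the treatment of the first subinterval $I_1$: the quadratic space-time reconstruction (4.1) and the time reconstruction (4.6) are only defined for $n \geq 2$, and on $I_1$ the fractional-integral reconstruction (5.1) takes over while the residual pieces $E_\alpha^n$, $E_I^n$, $E_{\mathcal{W}}^n$ vanish by definition. Reading $\widehat{W}_h^1 := 0$ so that $\widehat{U}_h \equiv U_h$ on $I_1$ keeps the $n=1$ summand in the telescoping bound harmless and matches the conventions already used inside the proof of Theorem \ref{thm3}.

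I do not expect any substantive obstacle beyond this bookkeeping: Theorem \ref{thm3} has already absorbed all of the analytic difficulty (Lemma \ref{lem1}, the coercivity splitting, and the Young/Cauchy--Schwarz manipulations), and the remaining two contributions are a direct application of Lemma \ref{lem4} and a polynomial maximum on each subinterval. Thus the proof is purely a triangle-inequality combination of Theorem \ref{thm3}, Lemma \ref{lem4}, and the elementary estimate (5.6).
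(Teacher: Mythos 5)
Your proposal is correct and is essentially identical to the paper's proof: the paper likewise integrates the decomposition (\ref{eq5.2}) and combines Theorem \ref{thm3} for $\int_0^T\|\widehat\rho\|\,dt$ with the elliptic-reconstruction bound (\ref{eq5.5}) and the time-reconstruction bound (\ref{eq5.6}), summed over the subintervals. Your explicit convention $\widehat W_h^1:=0$ (so that $\widehat U_h\equiv U_h$ on $I_1$) is a bookkeeping point the paper leaves implicit, but it does not alter the argument.
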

\begin{proof}
In view of the elliptic reconstruction error, the time reconstruction error and Theorem \ref{thm3}, we obtain a global final error estimate and complete the proof of Theorem \ref{thm7}.
\end{proof}

\begin{theorem}[A pointwise final error estimate] \label{thm8}
 Let $U_h(t),~U_{\mathcal R}(t)$ be the L1 approximation to the solution of the problem (\ref{eq1.1})-(\ref{eq1.3}), $\widehat U_{\mathcal R}$ be the corresponding reconstruction of $U_{\mathcal R}$, $\rho=u-U_{\mathcal R}$ and $\widehat \rho=u-\widehat U_{\mathcal R}$. Then the following a posteriori error estimate is valid, that is, for $t\in I_n$, $n\ge 1$,
\begin{equation*}
\begin{aligned}
||u-U_h||&\leq \mathcal E_{e0}(\widehat U_h )+ \frac{k_n^2}{8}\|\widehat W^{n}_h\|+||\widehat \rho(0)||+\frac{1}{\sin\theta}C_{\alpha,\varphi}\int_0^t(t-\tau)^{\alpha-1}(||E_f^n(\tau)||+||E_{\alpha}^n(\tau)||+||E_{\mathcal W}^n(\tau)||))d\tau \\
&\quad+ \frac{1}{\sin\theta}C_{\alpha,\varphi}\int_0^t(t-\tau)^{\alpha-1}||\mathcal A_h^n U_h^n-\mathcal A\widehat U_{\mathcal R}||d\tau
+\frac{1}{\sin\theta}C_{\alpha,\varphi}\int_0^t(t-\tau)^{\alpha-1}||\mathcal A\widehat \rho(0)||d\tau.
\end{aligned}
\end{equation*}
\end{theorem}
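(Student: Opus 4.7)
My plan is to obtain the pointwise final error estimate by combining the triangle inequality decomposition already set up in equation (5.2) with the two reconstruction error bounds (5.5)--(5.6) and the pointwise a posteriori error estimate for $\widehat\rho$ proved in Theorem 4.4.

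First, I would apply the triangle inequality in the form stated in (5.2), namely
\[
\|u-U_h\|\le \|u-\widehat U_{\mathcal R}\|+\|\widehat U_{\mathcal R}-\widehat U_h\|+\|\widehat U_h-U_h\|,
\]
which splits the total error into the (non-computable) exact-vs-reconstruction part $\|\widehat\rho\|$, a spatial elliptic reconstruction part, and a temporal reconstruction part. Next, I would bound $\|\widehat U_{\mathcal R}-\widehat U_h\|$ by $\mathcal E_{e0}(\widehat U_h)$ using (5.5), which is an immediate consequence of Lemma 2.4 applied to the discrete function $\widehat U_h$ and the definition (4.1) of $\widehat U_{\mathcal R}$ (the quadratic bubble term involves $\mathcal R^n\widehat W^n_h$, so that $\widehat U_{\mathcal R}-\widehat U_h$ is precisely the elliptic reconstruction error of $\widehat U_h$). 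Then I would invoke (5.6), which bounds $\|\widehat U_h-U_h\|$ by $\frac{k_n^2}{8}\|\widehat W^n_h\|$ using the quadratic perturbation formula (3.6) and the elementary estimate $(t-t_{n-1})(t-t_n)\le k_n^2/4$ valid on $I_n$.

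Finally, to control $\|u-\widehat U_{\mathcal R}(t)\|=\|\widehat\rho(t)\|$, I would insert the pointwise $L^\infty(0,T;L^2(\Omega))$ bound from Theorem 4.4, which already expresses this quantity in terms of $\|\widehat\rho(0)\|$, the residual components $E_f^n$, $E_\alpha^n$, $E_{\mathcal W}^n$, the elliptic residual $\|\mathcal A_h^n U_h^n-\mathcal A\widehat U_{\mathcal R}\|$, and $\|\mathcal A\widehat\rho(0)\|$, all convolved against the Mittag--Leffler-type kernel $(t-\tau)^{\alpha-1}$. Adding the three bounds yields exactly the inequality stated in Theorem 5.8.

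I expect this proof to be almost entirely routine: no new analytic work is needed beyond the assembly of previously established estimates. The only subtle point, which is where I would be careful, is to verify that (5.5) applies correctly on the initial interval $I_1$ (where $\widehat U_{\mathcal R}$ is defined via the fractional integral reconstruction (4.6) rather than the quadratic perturbation (4.8)); however, in both sub-cases $\widehat U_{\mathcal R}-\widehat U_h$ reduces to a finite sum of elliptic reconstruction errors of the form $\mathcal R^j U_h^j-U_h^j$ (with appropriate temporal weights bounded by $1$), so Lemma 2.4 still yields the stated bound by $\mathcal E_{e0}(\widehat U_h)$. Once this is checked, the theorem follows by direct summation.
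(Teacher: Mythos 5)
Your proposal coincides with the paper's own proof: the paper establishes Theorem \ref{thm8} exactly by combining the decomposition (\ref{eq5.2}) with the elliptic reconstruction bound (\ref{eq5.5}), the time reconstruction bound (\ref{eq5.6}), and the pointwise estimate of Theorem \ref{thm4}, which is precisely your assembly. Your closing caution about the initial interval $I_1$ is a point the paper itself glosses over (there $\widehat U_h$ and $\widehat W^n_h$ are only defined for $n\ge 2$, and on $I_1$ the difference $\widehat U_{\mathcal R}-U_{\mathcal R}$ involves $\mathcal A$ applied to reconstructions, not just terms of the form $\mathcal R^j U_h^j-U_h^j$), but this does not change the fact that your route is the same as the paper's.
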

\begin{proof}
Similarly, from (\ref{eq5.5}), (\ref{eq5.6}) and Theorem \ref{thm4}, we prove the Theorem \ref{thm8} and derive a pointwise final error estimate for the quadratic space-time reconstruction.
\end{proof}

From Theorems \ref{thm5}-\ref{thm8}, it can be seen that the final error estimates for either linear space-time reconstruction or quadratic space-time reconstruction does not depend on the true solution of the equation (\ref{eq1.1}), which is very suitable for certain problems where the true solution is unavailable. This is also the original purpose of introducing a posteriori error estimate.

\section{Numerical experiment}\label{sec:6}

In this section, we will give some numerical experiments to investigate the behavior of a posteriori error estimate. Since the theoretical results in the $L^\infty(L^2)$-norm sense are similar to our previous results in semi-discrete scheme (see \cite{CJL22}), we only show the results in the $L^1(L^2)$-norm sense here for the sake of brevity of expression.

\subsection*{Example 6.1 (Smooth solution)}
 Let us consider the following model problem on $\Omega=(0,1)$,
\begin{equation*}%\label{eq3.10}
\left\{
\begin{aligned}
&\partial _t^{\alpha} u =\frac{\partial ^2 u}{\partial x^2}+f, \qquad x\in \Omega,\quad 0\le t\le 1,\\
&u(0,t)=u(1,t)=0, \quad 0\le t\le 1,\\
&u(x,0)=u_0(x)=x(1-x).
\end{aligned}
\right.
\end{equation*}
We prescribe the exact solution of the problem as $u(x,t)=(1+t^2)x(1-x)$. Then the corresponding source function is $f(x,t)=\frac{\Gamma(3)}{\Gamma(3-\alpha)}t^{2-\alpha}x(1-x)+2(1+t^2).$

Let us define the estimators
$$\mathcal E_u(T)=\int_0^T\|u-U_h\|dt,\quad\mathcal E_t^{\alpha}(T):=\int_{0}^{T}\|\bar\partial^\alpha_k U^n_h-\partial^\alpha_t U_{\mathcal R}\|dt,\quad \mathcal {E}_U(T):=\left(\int_0^T\|{\mathcal R}^nU_h^n-U_{\mathcal R}\|^2_1dt\right)^{1/2},$$
$$ \mathcal E_f(T):=\left(\int_0^T\|f-f_h^n\|^2dt\right)^{1/2}, \quad \mathcal {E}_W(T):=\int_0^T\|\mathcal W_h^n\|dt,\quad \mathcal E_{\widehat U} (T)=\left(\int_0^T\|U_{\mathcal R}(t)-\widehat U_{\mathcal R}(t)\|_1^2dt\right)^{1/2}.$$

The a posteriori error estimators $\mathcal E_t^{\alpha}(T)$, $\mathcal {E}_U(T)$, $\mathcal E_f(T)$, $\mathcal E_{\widehat U} (T)$, $\mathcal E_{W}(T)$ and a global final error as well as their temporal convergence orders are listed in Tables \ref{table1-1} and \ref{table1-2}, respectively. From Table \ref{table1-1}, we observe that  the a posteriori error estimators $\mathcal E_t^{\alpha}(T)$, $\mathcal {E}_U(T)$ are  of order $2-\alpha$ and  $1$, and a global final error $\mathcal E_u(T)$ is  of optimal order $2-\alpha$. And from the composition of the residual $E_1^n$ of the linear reconstruction in (\ref{eq3.5}), the presence of the term $\mathcal {E}_U(T)$ limits the residual $E_1^n$ to only  order 1, which don't match the convergence of the L1 method. Then we could see that the a posteriori error estimators $\mathcal E_f(T)$,~$\mathcal E_{\widehat U} (T)$ and  $\mathcal E_{W}(T)$ in  Table \ref{table1-2} are  of optimal order $2$, $2$ and $2-\alpha$, respectively. Therefore, from the composition of the residual $E_2^n$,  we know that its convergence order is  of order $2-\alpha$, which matches with the optimal convergence of the L1 method. It should be noted that a posteriori error estimators for the quadratic reconstruction in Theorem \ref{thm3} are the same as the posterior error estimators for the linear reconstruction in Theorem \ref{thm1} and will not be repeated here.  Compared with the linear reconstruction, the quadratic reconstruction can improve the convergence of the residuals in the error equation and thus can control a global final error more precisely, which is the reason why we prefer the quadratic reconstruction.

\begin{table}[htp]
\footnotesize
\begin{center}
\caption{Example 6.1: The errors and their convergence orders of a posteriori error estimators in Theorem \ref{thm1}, where $T=1$ and $M=512$.}\label{table1-1} \vskip -2mm
\def\temptablewidth{0.666\textwidth}
 {\rule{\temptablewidth}{0.9pt}}
\begin{tabular*}{\temptablewidth}{|c|c|cc|cc|cc|}
    $\alpha$ &    $N$   &$\mathcal E_u(T)$ &  order & $\mathcal E_t^{\alpha}(T)$ &  order  & $\mathcal {E}_U(T)$ & order  \\
  \cline{1-8}
                 &  16   & 2.9050E-04  & -      & 2.1420E-04  & -      & 3.4005E-03  & -         \\
           0.25  &  32   & 9.2624E-05  & 1.6491 & 6.6942E-05  & 1.6780 & 1.7008E-03  & 0.9995   \\
                 &  64   & 2.8559E-05  & 1.6975 & 2.0737E-05  & 1.6907 & 8.5049E-04  & 0.9998   \\
                 &  128  & 8.2303E-06  & 1.7949 & 6.3774E-06  & 1.7012 & 4.2526E-04  & 0.9999   \\
     \cline{1-8}
                 &  16  & 3.3305E-04  & -      & 5.6401E-04  & -      & 3.4008E-03  & -        \\
           0.5   &  32  & 1.1995E-04  & 1.4733 & 2.0799E-04  & 1.4392 & 1.7009E-03  & 0.9995   \\
                 &  64  & 4.2456E-05  & 1.4984 & 7.5792E-05  & 1.4564 & 8.5050E-04  & 0.9998   \\
                 &  128 & 1.4535E-05  & 1.5464 & 2.7380E-05  & 1.4689 & 4.2526E-04  & 0.9999   \\
     \cline{1-8}
                 &  16  & 9.2400E-04  & -      & 9.9608E-04  & -      & 3.4006E-03  & -        \\
           0.75  &  32  & 3.9077E-04  & 1.2416 & 4.3214E-04  & 1.2048 & 1.7007E-03  & 0.9996   \\
                 &  64  & 1.6439E-04  & 1.2492 & 1.8535E-04  & 1.2212 & 8.5049E-04  & 0.9997   \\
                 &  128 & 6.8686E-05  & 1.2590 & 7.8914E-05  & 1.2319 & 4.2526E-04  & 0.9999   \\
       \end{tabular*}
{\rule{\temptablewidth}{0.95pt}}
\end{center}
\end{table}

\begin{table}[htp]
\footnotesize
\begin{center}
\caption{Example 6.1: The errors and their convergence orders of a posteriori error estimators in Theorem \ref{thm3}, where $T=1$ and $M=512$.}\label{table1-2} \vskip -2mm
\def\temptablewidth{0.666\textwidth}
 {\rule{\temptablewidth}{0.9pt}}
\begin{tabular*}{\temptablewidth}{|c|c|cc|cc|cc|}
    $\alpha$ &    $N$ & $\mathcal E_f(T)$  &  order & $\mathcal E_{\widehat U} (T)$ &  order  & $\mathcal E_{W}(T)$ & order  \\
  \cline{1-8}
                 &  16   &  5.2803E-03   &  -      &  1.4452E-05   &  -      & 3.6566E-03 & -           \\
           0.25  &  32   &  1.3163E-03   &  2.0041 &  3.6160E-06   &  1.9988 & 1.0482E-03 & 1.8026   \\
                 &  64   &  3.2860E-04   &  2.0021 &  9.0417E-07   &  1.9997 & 3.0221E-04 & 1.7943   \\
                 &  128  &  8.2087E-05   &  2.0011 &  2.2605E-07   &  1.9999 & 8.7602E-05 & 1.7865  \\
     \cline{1-8}
                 &  16   &  5.4191E-03   &  -      &  1.4453E-05   &  -      & 5.4921E-03 & -       \\
           0.5   &  32   &  1.3510E-03   &  2.0040 &  3.6161E-06   &  1.9989 & 1.8252E-03 & 1.5893   \\
                 &  64   &  3.3726E-04   &  2.0021 &  9.0418E-07   &  1.9998 & 6.0708E-04 & 1.5881   \\
                 &  128  &  8.4250E-05   &  2.0011 &  2.2605E-07   &  1.9999 & 2.0743E-04 & 1.5493  \\
     \cline{1-8}
                 &  16   &  5.5225E-03   &  -      &  1.4460E-05   &  -      & 9.9215E-03 & -        \\
           0.75  &  32   &  1.3769E-03   &  2.0039 &  3.6168E-06   &  1.9993 & 4.1978E-03 & 1.2409   \\
                 &  64   &  3.4373E-04   &  2.0020 &  9.0425E-07   &  1.9999 & 1.7777E-03 & 1.2397   \\
                 &  128  &  8.5867E-05   &  2.0011 &  2.2606E-07   &  2.0000 & 7.5171E-04 & 1.2417   \\
       \end{tabular*}
{\rule{\temptablewidth}{0.95pt}}
\end{center}
\end{table}

\subsection*{Example 6.2 (Nonsmooth solution)}

In the second experiment, we consider the equation (\ref{eq1.1})-(\ref{eq1.3}) with a nonsmooth exact solution $u(x,t)=(1+t^\alpha)x(1-x)$. Then the corresponding a posteriori error estimators $\mathcal E_t^{\alpha}(T)$, $\mathcal {E}_U(T)$, $\mathcal E_f(T)$, $\mathcal E_{\widehat U} (T)$, $\mathcal E_{W}(T)$ and a global final error as well as their temporal convergence orders are listed in Tables \ref{table1-3} and \ref{table1-4}, respectively. From Table \ref{table1-3}, we observe that  the a posteriori error estimators $\mathcal {E}_U(T)$ are still of optimal order $2-\alpha$, but the  posteriori error estimator $\mathcal E_t^{\alpha}(T)$ and a global final error $\mathcal {E}_u(T)$ is only  of order $1$, which is consistent with the convergence order of the L1 method for solving nonsmooth problems on the uniform mesh. Then we could see that  a posteriori error estimators $\mathcal E_{\widehat U} (T)$ and $\mathcal {E}_f(T)$ in  Table \ref{table1-4} are of optimal order $2$, and a posteriori error estimator  $\mathcal E_{W}(T)$ is only  of  order $1$. Thus, numerical experiments illustrate that for nonsmooth  solution problems or problems with unclear regularity, the optimal convergence order may not be achieved on uniform mesh either using linear or quadratic reconstruction.

\begin{table}[htp]
\footnotesize
\begin{center}
\caption{Example 6.2: The errors and their convergence orders of a posteriori error estimators in Theorem \ref{thm1}, where $T=1$ and $M=512$.}\label{table1-3} \vskip -2mm
\def\temptablewidth{0.666\textwidth}
 {\rule{\temptablewidth}{0.9pt}}
\begin{tabular*}{\temptablewidth}{|c|c|cc|cc|cc|}
    $\alpha$ &   $N$   & $\mathcal E_u(T)$  &  order & $\mathcal E_t^{\alpha}(T)$ &  order  & $\mathcal {E}_U(T)$ & order  \\
  \cline{1-8}
                 &  16   & 4.7707E-04  & -      & 3.6344E-04  & -      & 2.9791E-03  & -      \\
           0.25  &  32   & 2.0844E-04  & 1.1946 & 1.6953E-04  & 1.1002 & 1.4915E-03  & 0.9981 \\
                 &  64   & 9.6126E-05  & 1.1167 & 7.8010E-05  & 1.1198 & 7.4648E-04  & 0.9986 \\
                 &  128  & 4.8442E-05  & 0.9886 & 3.5602E-05  & 1.1317 & 3.7350E-04  & 0.9989 \\
     \cline{1-8}
                 &  16   & 4.0228E-04  & -      & 2.8196E-04  & -      & 2.9592E-03  & -       \\
           0.5   &  32   & 1.8818E-04  & 1.0961 & 1.3598E-04  & 1.0521 & 1.4804E-03  & 0.9991 \\
                 &  64   & 8.8881E-05  & 1.0822 & 6.4692E-05  & 1.0717 & 7.4051E-04  & 0.9994 \\
                 &  128  & 4.4146E-05  & 1.0096 & 3.0530E-05  & 1.0834 & 3.7036E-04  & 0.9996 \\
     \cline{1-8}
                 &  16   & 5.0443E-04  & -      & 1.6481E-04  & -      & 2.9489E-03  & -       \\
           0.75  &  32   & 2.3575E-04  & 1.0974 & 8.2238E-04  & 1.0029 & 1.4747E-03  & 0.9997 \\
                 &  64   & 1.1606E-04  & 1.0237 & 4.0477E-04  & 1.0227 & 7.3744E-04  & 0.9998 \\
                 &  128  & 5.7584E-05  & 1.0111 & 1.9763E-05  & 1.0343 & 3.6875E-04  & 0.9999 \\
       \end{tabular*}
{\rule{\temptablewidth}{0.95pt}}
\end{center}
\end{table}

\begin{table}[htp]
\footnotesize
\begin{center}
\caption{Example 6.2: The errors and their convergence orders of a posteriori error estimators in Theorem \ref{thm3}, where $T=1$ and $M=512$.}\label{table1-4} \vskip -2mm
\def\temptablewidth{0.666\textwidth}
 {\rule{\temptablewidth}{0.9pt}}
\begin{tabular*}{\temptablewidth}{|c|c|cc|cc|cc|}
    $\alpha$ &    $N$ & $\mathcal E_f(T)$  &  order & $\mathcal E_{\widehat U} (T)$ &  order  & $\mathcal E_{W}(T)$ & order  \\
  \cline{1-8}
                 &  16  & 5.2141E-03  & -       &  9.2046E-06   &  -      & 7.7135E-03 & -           \\
           0.25  &  32  & 1.3182E-03  & 1.9838  &  2.3904E-06   &  1.9451 & 3.8592E-03 & 0.9991   \\
                 &  64  & 3.3191E-04  & 1.9897  &  6.1470E-07   &  1.9593 & 1.9219E-03 & 1.0058   \\
                 &  128 & 8.3354E-05  & 1.9935  &  1.5653E-07   &  1.9734 & 9.5457E-04 & 1.0096  \\
     \cline{1-8}
                 &  16  & 5.1926E-03  & -      &  8.0289E-06   &  -      & 7.3875E-03 & -       \\
           0.5   &  32  & 1.3061E-03  & 1.9912 &  2.1135E-06   &  1.9256 & 3.7692E-03 & 0.9708   \\
                 &  64  & 3.2775E-04  & 1.9946 &  5.5150E-07   &  1.9382 & 1.8963E-03 & 0.9911   \\
                 &  128 & 8.2122E-05  & 1.9968 &  1.4227E-07   &  1.9547 & 9.4578E-04 & 1.0036  \\
     \cline{1-8}
                 &  16  & 5.1915E-03  & -      &  9.6178E-06   &  -      & 5.5731E-03 & -        \\
           0.75  &  32  & 1.3012E-03  & 1.9963 &  2.5649E-06   &  1.9068 & 2.9857E-03 & 0.9004   \\
                 &  64  & 3.2577E-04  & 1.9979 &  6.7805E-07   &  1.9194 & 1.5718E-03 & 0.9257   \\
                 &  128 & 8.1512E-05  & 1.9988 &  1.7582E-07   &  1.9473 & 8.1983E-04 & 0.9390   \\
       \end{tabular*}
{\rule{\temptablewidth}{0.95pt}}
\end{center}
\end{table}

\subsection*{Example 6.3 (Graded mesh)}

From the above numerical experiments, we know that the optimal convergence order of numerical algorithm with nonsmooth data couldn't be achieved on the uniform mesh. In this numerical experiments, we still consider the above nonsmooth solution problem but use the graded mesh, where the graded parameter takes $r=\max{\{(2-\alpha)/\alpha,1\}}.$

Then the corresponding a posteriori error estimators $\mathcal E_t^{\alpha}(T)$, $\mathcal {E}_U(T)$, $\mathcal E_f(T)$, $\mathcal E_{\widehat U} (T)$, $\mathcal E_{W}(T)$ and a global final error as well as their temporal convergence orders are listed in Tables \ref{table1-5} and \ref{table1-6}, respectively. From Table \ref{table1-5}, we observe that a global final error and the a posteriori error estimators $\mathcal E_t^{\alpha}(T)$, $\mathcal {E}_U(T)$  are of optimal order $2-\alpha$, $2-\alpha$ and $1$ respectively. And from Table \ref{table1-6}, we could see that the a posteriori error estimators $\mathcal {E}_f(T)$, $\mathcal E_{\widehat U} (T)$ and  $\mathcal E_{W}(T)$  are of optimal order $2$, $2$ and $2-\alpha$, respectively. Therefore, numerical experiments show that for nonsmooth solution problems, either linear or quadratic reconstruction on graded mesh can achieve its optimal convergence order, which enlightens us that a posteriori error estimates should be used on nonuniform mesh as much as possible for nonsmooth solution problems or problems with unclear regularity.

\begin{table}[htp]
\footnotesize
\begin{center}
\caption{Example 6.3: The errors and their convergence orders of a posteriori error estimators in Theorem \ref{thm1}, where $T=1$ and $M=512$.}\label{table1-5} \vskip -2mm
\def\temptablewidth{0.666\textwidth}
 {\rule{\temptablewidth}{0.9pt}}
\begin{tabular*}{\temptablewidth}{|c|c|cc|cc|cc|}
    $\alpha$ &     $N$   & $\mathcal E_u(T)$  &  order  & $\mathcal E_t^{\alpha}(T)$ &  order  & $\mathcal {E}_U(T)$ & order \\
  \cline{1-8}
                 &  16   & 6.8462E-04  & -      & 5.4893E-04  & -      & 2.9791E-03  & -       \\
           0.25  &  32   & 2.1025E-04  & 1.7032 & 1.6910E-04  & 1.6987 & 1.4915E-03  & 0.9981 \\
                 &  64   & 6.3800E-05  & 1.7205 & 5.1588E-05  & 1.7128 & 7.4648E-04  & 0.9986 \\
                 &  128  & 1.8939E-05  & 1.7522 & 1.5591E-05  & 1.7263 & 3.7350E-04  & 0.9989 \\
     \cline{1-8}
                 &  16   & 9.6730E-04  & -      & 5.6088E-04  & -      & 3.0116E-03  & -       \\
           0.5   &  32   & 3.5117E-04  & 1.4618 & 2.1020E-04  & 1.4159 & 1.5098E-03  & 0.9962 \\
                 &  64   & 1.2570E-04  & 1.4822 & 7.7201E-05  & 1.4451 & 7.5635E-04  & 0.9971 \\
                 &  128  & 4.4146E-05  & 1.5096 & 2.7945E-05  & 1.4660 & 3.7874E-04  & 0.9978 \\
     \cline{1-8}
                 &  16   & 5.7994E-04  & -      & 4.5654E-04  & -      & 3.0600E-03  & -       \\
           0.75  &  32   & 2.5289E-04  & 1.1974 & 2.0009E-04  & 1.1901 & 1.5373E-03  & 0.9931 \\
                 &  64   & 1.0828E-04  & 1.2237 & 8.6428E-04  & 1.2111 & 7.7147E-04  & 0.9947 \\
                 &  128  & 4.5498E-05  & 1.2509 & 3.6993E-05  & 1.2242 & 3.8685E-04  & 0.9958 \\
       \end{tabular*}
{\rule{\temptablewidth}{0.95pt}}
\end{center}
\end{table}

\begin{table}[htp]
\footnotesize
\begin{center}
\caption{Example 6.3: The errors and their convergence orders of a posteriori error estimators in Theorem \ref{thm3}, where $T=1$ and $M=512$.}\label{table1-6} \vskip -2mm
\def\temptablewidth{0.666\textwidth}
 {\rule{\temptablewidth}{0.9pt}}
\begin{tabular*}{\temptablewidth}{|c|c|cc|cc|cc|}
    $\alpha$ & $N$ & $\mathcal E_f(T)$  &  order  & $\mathcal E_{\widehat U} (T)$ &  order  & $\mathcal E_{W}(T)$ & order  \\
  \cline{1-8}
                 &  16  & 5.2364E-03  & -      &  9.7236E-06   &  -      & 8.4996E-03 & -           \\
           0.25  &  32  & 1.3418E-03  & 1.9644 &  2.5773E-06   &  1.9156 & 2.6395E-03 & 1.6871   \\
                 &  64  & 3.4081E-04  & 1.9771 &  6.6612E-07   &  1.9520 & 8.0758E-04 & 1.7086   \\
                 &  128 & 8.5987E-05  & 1.9868 &  1.6980E-07   &  1.9720 & 2.4454E-04 & 1.7235  \\
     \cline{1-8}
                 &  16  & 5.2587E-03  & -      &  9.8743E-06   &  -      & 7.3876E-03 & -       \\
           0.5   &  32  & 1.3389E-03  & 1.9737 &  2.5896E-06   &  1.9309 & 2.7286E-03 & 1.4369   \\
                 &  64  & 3.3884E-04  & 1.9823 &  6.6434E-07   &  1.9628 & 9.9896E-04 & 1.4497   \\
                 &  128 & 8.5404E-05  & 1.9882 &  1.6847E-07   &  1.9794 & 3.6376E-04 & 1.4614  \\
     \cline{1-8}
                 &  16  & 5.3287E-03  & -      &  1.0023E-05   &  -      & 5.5732E-03 & -        \\
           0.75  &  32  & 1.3698E-03  & 1.9598 &  2.6066E-06   &  1.9431 & 2.4161E-03 & 1.2058   \\
                 &  64  & 3.4926E-04  & 1.9716 &  6.6493E-07   &  1.9709 & 1.0391E-03 & 1.2173   \\
                 &  128 & 8.8536E-05  & 1.9800 &  1.6799E-07   &  1.9848 & 4.4334E-04 & 1.2289   \\
       \end{tabular*}
{\rule{\temptablewidth}{0.95pt}}
\end{center}
\end{table}

\section{Conclusion}

In this paper, we derive the optimal order a posteriori error estimates for the fully discrete scheme of TFPDEs, where the L1 algorithm and the finite element method are applied in time and space discretizations, respectively. In view of the weak regularity of the solutions to this class of equations, a posteriori error estimates are extremely important for solving TFPDEs. Firstly, with the help of  linear space-time reconstruction, we  derived a posteriori error estimate  in the  $L^\infty(L^2)$-norm and $L^1(L^2)$-norm for the fully discrete scheme. But the numerical experiments in Section \ref{sec:6} indicate that the residual of the  error equation for the linear space-time reconstruction  is of suboptimal order with respect to the L1 method even for the problem with smooth solution.  Then, to overcome the suboptimal convergence of residuals for the linear space-time reconstruction, we derive a posteriori error estimate for the quadratic reconstruction in the sense of $L^\infty(L^2)$-norm and $L^1(L^2)$-norm. Further, by means of  the above a posteriori error estimates and triangular inequality, global and pointwise final error estimates for the linear and quadratic space-time reconstruction are obtained. It is worth emphasizing that these a posteriori error estimates and final error estimates depend only on the numerical solution and the initial data of the problem, and do not depend on the true solution of problem, so they are also computable for problems with unavailable true solutions. Example 6.1 shows that a posteriori error estimators are of optimal  order  for the problem with smooth solution, but Example 6.2 indicates that using uniform mesh could only achieve  suboptimal order for the problem with nonsmooth data. Then we perform the  Example 6.2 again on the graded mesh and check that a posteriori error estimators recover the optimal convergence, and this can be seen from Table \ref{table1-5}-\ref{table1-6}. Therefore, for the linear and quadratic space-time reconstructions, these numerical results demonstrate that a posteriori error estimate of the fully discrete scheme is valid on nonuniform meshes regardless of whether the true solution of the problem is smooth or not.

For TFPDEs, it is the first time to give the complete theoretical analysis of a posteriori error estimates for the fully discrete scheme. In this paper, we only consider the a posteriori error estimates  but not adaptive algorithms. Designing suitable adaptive algorithms based on a posteriori error estimates and considering higher order numerical algorithms in the time direction will be our next work.

\section*{Appendixes}
\subsection*{Appendix 1} With the help of the elliptic reconstruction error,  the computable form of the term $\int_{0}^{T}\|\bar\partial^\alpha_k U^n_h-\partial^\alpha_t U_{\mathcal R}\|dt$ is investigated.

Applying  Proposition \ref{pro2.2} and Lemma \ref{lem4}, we can obtain from (\ref{eq2.15}) and (\ref{eq3.4})
\begin{equation*}
\begin{aligned}
\int_{0}^{T}\|\bar\partial^\alpha_k U^n_h-&\partial^\alpha_t U_{\mathcal R}\|dt=\sum\limits_{n=1}^N\int_{t_{n-1}}^{t_n}\|\bar\partial^\alpha_k U^n_h-\partial^\alpha_t U_{\mathcal R}\|dt\\
&\qquad=\sum\limits_{n=1}^N\int_{t_{n-1}}^{t_n}\| \sum\limits_{j=1}^{n}a_{j}(t_n)\Pi_j^n\bar{\partial}U_h^j  -\sum\limits_{j=1}^{n-1}a_j(t)\frac{\mathcal R^{j}U_h^j-\mathcal R^{j-1}U_h^{j-1}}{k_j}\\
&\qquad\quad- \frac{(t-t_{n-1})^{1-\alpha}}{\Gamma(2-\alpha)}\frac{\mathcal R^{n}U_h^n-\mathcal R^{n-1}U_h^{n-1}}{k_n} \|dt\\
&\qquad=\sum\limits_{n=1}^N\int_{t_{n-1}}^{t_n}\| \sum\limits_{j=1}^{n}a_{j}(t_n)\frac{(\Pi_j^n-I)U_h^j-(\Pi_{j-1}^n-I)U_h^{j-1}+(U_h^j-U_h^{j-1})}{k_j}\\
&\qquad\quad-\sum\limits_{j=1}^{n-1}a_j(t)\frac{(\mathcal R^{j}-I)U_h^j-(\mathcal R^{j-1}-I)U_h^{j-1}+(U_h^j-U_h^{j-1})}{k_j}\\
&\qquad\quad -\frac{(t-t_{n-1})^{1-\alpha}}{\Gamma(2-\alpha)}\frac{(\mathcal R^{n}-I)U_h^n-(\mathcal R^{n-1}-I)U_h^{n-1}+(U_h^n-U_h^{n-1})}{k_n} \|dt\\
&\qquad\leq \sum\limits_{n=1}^N\int_{t_{n-1}}^{t_n} \sum\limits_{j=1}^{n}\frac{a_{j}(t_n)}{k_j}( \mathcal E_{I0}(U_h^j)+\mathcal E_{I0}(U_h^{j-1}))+ \sum\limits_{j=1}^{n-1}\frac{a_j(t)}{k_j}( \mathcal E_{e0}(U_h^j)+\mathcal E_{e0}(U_h^{j-1}))\\
&\qquad\quad +\frac{(t-t_{n-1})^{1-\alpha}}{\Gamma(2-\alpha)k_n}( \mathcal E_{e0}(U_h^n)+\mathcal E_{e0}(U_h^{n-1}))  +\sum\limits_{j=1}^{n-1}\frac{a_{j}(t_n)-a_j(t)}{k_j}\|U_h^j-U_h^{j-1}\|\\
&\qquad\quad+ (\frac{a_{n}(t_n)}{k_n}- \frac{(t-t_{n-1})^{1-\alpha}}{\Gamma(2-\alpha)k_n})\|U_h^n-U_h^{n-1}\| \\
&\leq  \sum\limits_{n=1}^N \sum\limits_{j=1}^{n}\left( C_1^{j,n}( \mathcal E_{I0}(U_h^j)+\mathcal E_{I0}(U_h^{j-1}))+  C_2^{j,n}( \mathcal E_{e0}(U_h^n)+\mathcal E_{e0}(U_h^{n-1}))+  C_3^{j,n} \|U_h^j-U_h^{j-1}\|   \right),
\end{aligned}
\end{equation*}
where
\begin{equation*}
\begin{aligned}
&C_1^{j,n}=\int_{t_{n-1}}^{t_n} \frac{a_j(t_n)}{k_j}dt,~(j=1,\cdots,n;~n=1,\cdots,N;),\\
&C_2^{j,n}=\left\{
\begin{aligned}
\int_{t_{n-1}}^{t_n} \frac{a_j(t)}{k_j}dt,~(j=1,\cdots,n-1;~n=2,\cdots,N;),\\
\int_{t_{n-1}}^{t_n} \frac{(t-t_{n-1})^{1-\alpha}}{\Gamma(2-\alpha)k_n} dt,~(j=n;~n=1,\cdots,N;),
\end{aligned}
\right.\\
&C_3^{j,n}=C_1^{j,n}-C_2^{j,n},~(j=1,\cdots,n;~n=1,\cdots,N;).
\end{aligned}
\end{equation*}
Thus the term $\int_{0}^{T}\|\bar\partial^\alpha_k U^n_h-\partial^\alpha_t U_{\mathcal R}\|dt$ can be controlled according to the above equation. $\qquad\qquad\qquad\Box$

\subsection*{Appendix 2} In this part of Appendixes, we are committed to obtaining the computable expression of the term   $\int_0^T\|{\mathcal R}^nU_h^n-U_{\mathcal R}\|^2_1dt$.

By means of Lemma \ref{lem4} and the definition of $U_{\mathcal R}$, we could express $\|{\mathcal R}^nU_h^n-U_{\mathcal R}\|^2_1$ as
\begin{equation*}
\begin{aligned}
\|{\mathcal R}^nU_h^n-U_{\mathcal R}\|^2_1&=\|{\mathcal R}^nU_h^n-\ell_{n,-1}(t)\mathcal R^{n-1}U_h^{n-1}-\ell_{n,0}(t)\mathcal R^{n} U_h^n\|_1^2\\
&=\|(1-\ell_{n,0}(t))(\mathcal R^{n}-I) U_h^n+(1-\ell_{n,0}(t)) U_h^n-\ell_{n,-1}(t)(\mathcal R^{n-1}-I)U_h^{n-1}-\ell_{n,-1}(t)U_h^{n-1}\|_1^2\\
&=\|(1-\ell_{n,0}(t))(\mathcal R^{n}-I) (U_h^n-U_h^{n-1}) +(1-\ell_{n,0}(t))( U_h^n- U_h^{n-1})\|_1^2\\
& \leq 2(1-\ell_{n,0}(t))^2(t)\mathcal E_{e1}^2(U_h^{n}-U_h^{n-1})+2(1-\ell_{n,0}(t))^2\|U_h^{n}-U_h^{n-1}\|_1^2.
\end{aligned}
\end{equation*}
Thus we could compute the term $\int_0^T\|{\mathcal R}^nU_h^n-U_{\mathcal R}\|^2_1dt$ from the above inequality
\begin{equation*}
\begin{aligned}
\int_0^T\|{\mathcal R}^nU_h^n-U_{\mathcal R}\|^2_1dt  & \leq \sum\limits_{n=1}^{N}\int_{t_{n-1}}^{t_n}\|{\mathcal R}^nU_h^n-U_{\mathcal R}\|^2_1dt\\
&\leq \sum\limits_{n=1}^{N}\int_{t_{n-1}}^{t_n} 2(1-\ell_{n,0}(t))^2(t)\mathcal E_{e1}^2(U_h^{n}-U_h^{n-1})+2(1-\ell_{n,0}(t))^2\|U_h^{n}-U_h^{n-1}\|_1^2dt\\
&\leq \sum\limits_{n=1}^{N}\left\{C_4^n\mathcal E_{e1}^2(U_h^{n}-U_h^{n-1})+C_4^n\|U_h^{n}-U_h^{n-1}\|_1^2\right\},
\end{aligned}
\end{equation*}
where
\begin{eqnarray*}
 C_4^n=\int_{t_{n-1}}^{t^n} 2(1-\ell_{n,0}(t))^2 dt,~(n=1,\cdots,N).
\end{eqnarray*}
$\qquad\qquad\qquad\qquad\qquad\qquad \qquad\qquad\qquad\qquad\qquad\qquad\qquad\qquad\qquad\qquad \qquad\qquad\qquad\qquad\qquad\qquad\quad\Box$

\subsection*{Appendix 3}  Depending on the definition of $\widehat U_{\mathcal R}(t)$, we divide $\int_0^T\|U_{\mathcal R}(t)-\widehat U_{\mathcal R}(t)\|_1^2dt$ into $\int_0^{t_1}\|U_{\mathcal R}(t)-\widehat U_{\mathcal R}(t)\|_1^2dt$ ang $\int_{t_1}^T\|U_{\mathcal R}(t)-\widehat U_{\mathcal R}(t)\|_1^2dt$ for discussion. When  $t\in I_1$, by means of (\ref{eq5.1}), we have
\begin{equation*}
\begin{aligned}
\|U_{\mathcal R}(t)-\widehat U_{\mathcal R}(t)\|_1^2&=\frac{t^2}{k_1^2}a(\mathcal R^1 U_h^1-\mathcal R^0 U_h^0,\mathcal R^1 U_h^1-\mathcal R^0 U_h^0)\\
&-\frac{2t^{\alpha+2}}{\Gamma(\alpha+2)k_1^2}a(\mathcal R^1 U_h^1-\mathcal R^0 U_h^0,f_h^1-f_h^0-\mathcal A ( \mathcal R^1U_h^1  -\mathcal R^0U_h^0))\\
&-\frac{2t^{\alpha+1}}{\Gamma(\alpha+1)k_1}a(\mathcal R^1 U_h^1-\mathcal R^0 U_h^0,f_h^0- \mathcal A \mathcal R^0U_h^0)+\frac{t^{2\alpha}}{{\Gamma^2(\alpha+1)}}a(f_h^0- \mathcal A \mathcal R^0U_h^0,f_h^0- \mathcal A \mathcal R^0U_h^0)\\
&+\frac{2t^{2\alpha+1}}{\Gamma(\alpha+1)\Gamma(\alpha+2)k_1}a(f_h^0- \mathcal A \mathcal R^0U_h^0,f_h^1-f_h^0-\mathcal A ( \mathcal R^1U_h^1  -\mathcal R^0U_h^0))\\
&+\frac{t^{2\alpha+2}}{\Gamma^2(\alpha+2)k_1^2}a(f_h^1-f_h^0-\mathcal A ( \mathcal R^1U_h^1  -\mathcal R^0U_h^0),f_h^1-f_h^0-\mathcal A ( \mathcal R^1U_h^1  -\mathcal R^0U_h^0)).
\end{aligned}
\end{equation*}
Further from the above inequality, we get
\begin{equation*}
\begin{aligned}
\int_0^{t_1}\|U_{\mathcal R}(t)-\widehat U_{\mathcal R}(t)\|_1^2dt&=\frac{k_1}{3}a(\mathcal R^1 U_h^1-\mathcal R^0 U_h^0,\mathcal R^1 U_h^1-\mathcal R^0 U_h^0)\\
&-\frac{2k_1^{\alpha+1}}{(\alpha+2)\Gamma(\alpha+1)}a(\mathcal R^1 U_h^1-\mathcal R^0 U_h^0,f_h^0- \mathcal A \mathcal R^0U_h^0)\\
&-\frac{2k_1^{\alpha+1}}{(\alpha+3)\Gamma(\alpha+2)}a(\mathcal R^1 U_h^1-\mathcal R^0 U_h^0,f_h^1-f_h^0-\mathcal A ( \mathcal R^1U_h^1  -\mathcal R^0U_h^0))\\
&+\frac{k_1^{2\alpha+1}}{(2\alpha+1)\Gamma^2(\alpha+1)}a(f_h^0- \mathcal A \mathcal R^0U_h^0,f_h^0- \mathcal A \mathcal R^0U_h^0)\\
&+\frac{k_1^{2\alpha+1}}{\Gamma^2(\alpha+2)}a(f_h^0- \mathcal A \mathcal R^0U_h^0,f_h^1-f_h^0-\mathcal A ( \mathcal R^1U_h^1  -\mathcal R^0U_h^0))\\
&+\frac{k_1^{2\alpha+1}}{(2\alpha+3)\Gamma^2(\alpha+2)}a(f_h^1-f_h^0-\mathcal A ( \mathcal R^1U_h^1  -\mathcal R^0U_h^0),f_h^1-f_h^0-\mathcal A ( \mathcal R^1U_h^1  -\mathcal R^0U_h^0)).
\end{aligned}
\end{equation*}

 When $t\in I_n~(n\geq 2)$, applying Lemma \ref{lem4} and Proposition \ref{pro2.2} to (\ref{eq4.1}), we have
\begin{equation*}
\begin{aligned}
\|U_{\mathcal R}(t)-\widehat U_{\mathcal R}(t)\|_1^2&=\|\frac{(t-t_{n-1})(t-t_n)}{k_{n}+k_{n-1}}\mathcal R^n(\bar\partial U^{n}_h-\Pi^n_{n-1}\bar\partial U^{n-1}_h)\|_1^2\\
&\leq \|\frac{(t-t_{n-1})(t-t_n)}{k_{n}+k_{n-1}}\left\{  (\mathcal R^n-I)\bar\partial U^{n}_h+\bar\partial U^{n}_h- (\mathcal R^n-I)\Pi^n_{n-1}\bar\partial U^{n-1}_h)-\Pi^n_{n-1}\bar\partial U^{n-1}_h)\right\}\|_1^2\\
&\leq \left(\frac{(t-t_{n-1})(t_n-t)}{k_{n}+k_{n-1}}\right)^2\left\{4\mathcal E_{e1}(\bar\partial U^{n}_h)
+4C_{\Pi}\mathcal E_{e1}(\bar\partial U^{n-1}_h)+4\|\bar\partial U^{n}_h\|_1^2+4C_{\Pi}\|\bar\partial U^{n-1}_h\|_1^2\right\}.
\end{aligned}
\end{equation*}
Based on the above inequality, it is easy to obtain
\begin{equation*}
\begin{aligned}
\int_{t_1}^T\|U_{\mathcal R}(t)-\widehat U_{\mathcal R}(t)\|_1^2dt
&\leq \sum\limits_{n=2}^N\int_{t_{n-1}}^{t_n}\left(\frac{(t-t_{n-1})(t_n-t)}{k_{n}+k_{n-1}}\right)^2dt \left\{4\mathcal E_{t,1}(\bar\partial U^{n}_h)\right.\\
&\quad\left.+4C_{\Pi}\mathcal E_{t,1}(\bar\partial U^{n-1}_h)+4\|\bar\partial U^{n}_h\|_1^2+4C_{\Pi}\|\bar\partial U^{n-1}_h\|_1^2\right\}\\
&\leq \frac{k_n^5}{30(k_{n}+k_{n-1})^2}  \left\{4\mathcal E_{e1}(\bar\partial U^{n}_h)
+4C_{\Pi}\mathcal E_{e1}(\bar\partial U^{n-1}_h)+4\|\bar\partial U^{n}_h\|_1^2+4C_{\Pi}\|\bar\partial U^{n-1}_h\|_1^2\right\}.
\end{aligned}
\end{equation*}
Therefore, we give the computable expression of the term $\int_{0}^T\|U_{\mathcal R}(t)-\widehat U_{\mathcal R}(t)\|_1^2dt$.$\qquad\qquad\qquad\qquad\qquad \Box $

\subsection*{Appendix 4} By analyzing the elliptic reconstruction error,  a computable expression for the term $\int_{t_1}^T\|E_{\mathcal I}^n(t)\|dt$ can be obtained.  Applying Lemma \ref{lem4} to the definition of $E_{\mathcal I}^n(t)$  yields
\begin{equation*}
\begin{aligned}
\int_{t_1}^T\|E_{\mathcal I}^n(t)\|dt&=\sum\limits_{j=2}^{N}\int_{t_{n-1}}^{t_n}\|\mathcal A U_{\mathcal R}-A_h^nU_h^n\|dt\\
&=\sum\limits_{j=2}^{N}\int_{t_{n-1}}^{t_n}\|\ell_{n,-1}(t)\mathcal A \mathcal R^{n-1}U_h^{n-1}+\ell_{n,0}(t)\mathcal A \mathcal R^{n}U_h^{n}-\mathcal A_h^nU_h^n\|dt\\
&=\sum\limits_{j=2}^{N}\int_{t_{n-1}}^{t_n}\|\ell_{n,-1}(t) (\mathcal R^{n-1}\mathcal AU_h^{n-1}-\mathcal AU_h^{n-1}+\mathcal AU_h^{n-1})+\ell_{n,0}(t)(\mathcal R^{n}\mathcal AU_h^{n}-\mathcal AU_h^{n}+\mathcal AU_h^{n})\\
&\quad-(\ell_{n,-1}(t)+\ell_{n,0}(t))\mathcal A_h^nU_h^n\|dt\\
&\leq \sum\limits_{j=2}^{N}\int_{t_{n-1}}^{t_n} \ell_{n,-1}(t) (\mathcal E_{e0}(\mathcal AU_h^{n-1})+(\mathcal A-\mathcal A_h^n)U_h^{n-1}+\mathcal A_h^n(U_h^{n-1}-U_h^n))dt\\
&\quad+\sum\limits_{j=2}^{N}\int_{t_{n-1}}^{t_n}\ell_{n,0}(t) (\mathcal E_{e0}(\mathcal AU_h^{n}+(\mathcal A-\mathcal A_h^n)U_h^n))dt\\
&\leq \sum\limits_{j=2}^{N} \frac{k_n}{2}\left( \mathcal E_{e0}(\mathcal AU_h^{n-1})+\mathcal E_{e0}(\mathcal AU_h^{n})+(\mathcal A-\mathcal A_h^n)(U_h^{n-1}+U_h^n)+\mathcal A_h^n(U_h^{n-1}-U_h^n)\right).
\end{aligned}
\end{equation*}
Therefore, we can compute the term $\int_{t_1}^T\|E_{\mathcal I}^n(t)\|dt$ according to the above inequality. $\qquad\qquad\qquad\qquad \Box$

\subsection*{Appendix 5} It's the turn to discuss a computable expression of the term $\int_{t_1}^T\|E_{\mathcal W}^n(t)\|dt$. From the definition of  $\|E_{\mathcal W}^n(t)\|$ in (\ref{eq4.8}), it is clear that we need to estimate term $\|{\mathcal R}^n\widehat W_h^n\|$ to get its computable expression. Then the term $\|{\mathcal R}^n\widehat W_h^n\|$ can be processed as follows
\begin{equation*}
\begin{aligned}
&\|{\mathcal R}^n\widehat W_h^n\| = {\mathcal R}^n\frac{2(\bar\partial U^{n}_h-\Pi^n_{n-1}\bar\partial U^{n-1}_h)}{k_{n}+k_{n-1}}\|\\
&\quad= \frac{2}{k_{n}+k_{n-1}}\left\{ ({\mathcal R}^n - I+I)\| \bar\partial U^{n}_h- \bar\partial U^{n-1}_h \|+ ({\mathcal R}^n - I+I)\|(\Pi^n_{n-1}-I)\bar\partial U^{n-1}_h)\|  \right\}\\
&\quad\leq \frac{2}{k_{n}+k_{n-1}}\left\{ \mathcal E_{e0}(\bar\partial U^{n}_h- \bar\partial U^{n-1}_h)+\| \bar\partial U^{n}_h - \bar\partial U^{n-1}_h\|+ \mathcal E_{e0}((\Pi^n_{n-1}-I)\bar\partial U^{n-1}_h)+\|(\Pi^n_{n-1}-I) \bar\partial U^{n-1}_h \|  \right\}\\
&\quad\leq \frac{2}{k_{n}+k_{n-1}}\left\{ \mathcal E_{e0}(\bar\partial U^{n}_h- \bar\partial U^{n-1}_h)+\| \bar\partial U^{n}_h- \bar\partial U^{n-1}_h \|+ C_{\Pi}\mathcal E_{e0}(\bar\partial U^{n-1}_h)+C_{\Pi}\|\bar\partial U^{n-1}_h \|  \right\}.
\end{aligned}
\end{equation*}
In conclusion, we give the feasible method to compute the term $E_{\mathcal W}^n(t)$. In fact,
\begin{equation*}
\begin{aligned}
\|\mathcal W_h^n\|&\leq \frac{1}{2\Gamma(2-\alpha)}\sum\limits_{j=2}^{n-1}
k_j\left[(t-t_j)^{1-\alpha}+(t-t_{j-1})^{1-\alpha}\right]\|{\mathcal R}^j\widehat W_h^j\|\\
&+\frac{1}{\Gamma(3-\alpha)}\sum\limits_{j=2}^{n-1}\widehat W_h^j\left[(t-t_j)^{2-\alpha}-(t-t_{j-1})^{2-\alpha}\right]\|{\mathcal R}^j\widehat W_h^j\|\\
&+\frac{k_n(t-t_{n-1})^{1-\alpha}}{2\Gamma(2-\alpha)}\|{\mathcal R}^n\widehat W_h^n\|+\frac{(t-t_{n-1})^{2-\alpha}}{\Gamma(3-\alpha)}\|{\mathcal R}^n\widehat W_h^n\|\\
&+\frac{1}{\Gamma(1-\alpha)}\int^{t_{1}}_{0}(t-s)^{-\alpha}\left(\frac{s^\alpha}{\Gamma(\alpha+1)k_1}\|{f_h^1-f_h^0}-{\mathcal A ( \mathcal R^1U_h^1  -\mathcal R^0U_h^0)}\|+\frac{s^{\alpha-1}}{\Gamma(\alpha)}\|f_h^0-{\mathcal R}^0 \mathcal A U_h^0\|\right)ds\\
&+\frac{t^{1-\alpha}-(t-t_1)^{1-\alpha}}{\Gamma(2-\alpha)k_1}  \|\mathcal R^1 U_h^1-\mathcal R^0 U_h^0\|.
\end{aligned}
\end{equation*}
Thus we could compute the term $\int_{t_1}^T\|E_{\mathcal W}^n(t)\|dt$ as follow
\begin{equation*}
\begin{aligned}
&\int_{t_1}^T\|\mathcal W_h^n\|dt\leq \sum\limits_{n=3}^{N}\sum\limits_{j=2}^{n-1}\int_{t_{n-1}}^{t_n}\frac{k_j}{2\Gamma(2-\alpha)}
\left((t-t_j)^{1-\alpha}+(t-t_{j-1})^{1-\alpha}\right)dt\|{\mathcal R}^j\widehat W_h^j\|\\
&+\sum\limits_{n=3}^{N}\sum\limits_{j=2}^{n-1}\int_{t_{n-1}}^{t_n}\frac{1}{\Gamma(3-\alpha)}\left[(t-t_j)^{2-\alpha}-(t-t_{j-1})^{2-\alpha}\right]dt\|{\mathcal R}^j\widehat W_h^j\|\\
&+\sum\limits_{n=2}^{N}\int_{t_{n-1}}^{t_n}(\frac{k_n(t-t_{n-1})^{1-\alpha}}{2\Gamma(2-\alpha)}+\frac{(t-t_{n-1})^{2-\alpha}}{\Gamma(3-\alpha)})dt\|{\mathcal R}^n\widehat W_h^n\|+\int_{t_1}^T\frac{t^{1-\alpha}-(t-t_1)^{1-\alpha}}{\Gamma(2-\alpha)k_1}  \|\mathcal R^1 U_h^1-\mathcal R^0 U_h^0\|dt\\
&+\frac{1}{\Gamma(1-\alpha)}\int_{t_1}^T\int^{t_{1}}_{0}(t-s)^{-\alpha}\left(\frac{s^\alpha}{\Gamma(\alpha+1)k_1}\|{f_h^1-f_h^0}-{\mathcal A ( \mathcal R^1U_h^1  -\mathcal R^0U_h^0)}\|+\frac{s^{\alpha-1}}{\Gamma(\alpha)}\|f_h^0-{\mathcal R}^0 \mathcal A U_h^0\|\right)dsdt\\
&\leq \sum\limits_{n=3}^{N}\sum\limits_{j=2}^{n-1}C_7^{j,n}\|{\mathcal R}^j\widehat W_h^j\| + \sum\limits_{n=2}^{N}C_8^n\|{\mathcal R}^n\widehat W_h^n\|+C_9\|{f_h^1-f_h^0}-{\mathcal A ( \mathcal R^1U_h^1  -\mathcal R^0U_h^0)}\|+C_{10}\|f_h^0-{\mathcal R}^0 \mathcal A U_h^0\|\\
&\quad+C_{11}\|\mathcal R^1 U_h^1-\mathcal R^0 U_h^0\|,
\end{aligned}
\end{equation*}
where
\begin{equation*}
\begin{aligned}
&C_7^{j,n}=\int_{t_{n-1}}^{t_n} \frac{k_j}{2\Gamma(2-\alpha)}
\left((t-t_j)^{1-\alpha}+(t-t_{j-1})^{1-\alpha}\right)dt\\
&\quad\quad\quad+ \int_{t_{n-1}}^{t_n}\frac{1}{\Gamma(3-\alpha)}\left((t-t_j)^{2-\alpha}-(t-t_{j-1})^{2-\alpha}\right)dt, ~(j=2,\cdots,n-1;~n=3,\cdots,N;)\\
&C_8^n=\int_{t_{n-1}}^{t_n}  (\frac{k_n(t-t_{n-1})^{1-\alpha}}{2\Gamma(2-\alpha)}+\frac{(t-t_{n-1})^{2-\alpha}}{\Gamma(3-\alpha)})   dt,~(n=2,\cdots,N;)\\
&C_9= \frac{1}{\Gamma(1-\alpha)\Gamma(\alpha+1)k_1}\int_{t_1}^T \int^{t_{1}}_{0}  (t-s)^{-\alpha} s^\alpha   dsdt,\\
&C_{10}= \frac{1}{\Gamma(1-\alpha)\Gamma(\alpha)}\int_{t_1}^T \int^{t_{1}}_{0}  (t-s)^{-\alpha} s^{\alpha -1}  dsdt,\\
&C_{11}=\int_{t_1}^T  \frac{t^{1-\alpha}-(t-t_1)^{1-\alpha}}{\Gamma(2-\alpha)k_1} dt.
\end{aligned}
\end{equation*}
$\qquad\qquad\qquad\qquad\qquad\qquad \qquad\qquad\qquad\qquad\qquad\qquad\qquad\qquad\qquad\qquad \qquad\qquad\qquad\qquad\qquad\qquad\quad\Box$

\bibliographystyle{amsplain}

\enlargethispage{1\baselineskip}
\end{document}